\newtheorem{theorem}{Theorem}[section]         
\newtheorem{proposition}{Proposition}[section] 
\newtheorem{corollary}{Corollary}[section]
\newtheorem{lemma}{Lemma}[section]
\def\eps{\varepsilon}
\def\v{\vec}
\def\a{\alpha}
\def\d{\delta}
\def\m{\times}
\def\F{\widehat}
\def\L{\Lambda}
\def\l{\lambda}
\def\k{\kappa}
\def\t{\tilde}
\def\o{\overline}
\providecommand{\ind}{\mathop{\rm ind}\nolimits}%
\def\Z{\mathbf{Z}}
\title{On a two--dimensional analog of Szemer\'{e}di's Theorem in Abelian groups}
\author{I.D. Shkredov }
\begin{document}
\maketitle

\begin{abstract}
 Let $G$ be a finite Abelian group and
$A \subseteq G \m G$ be a set of cardinality at least $|G|^2/(\log
\log |G|)^c$, where $c>0$ is an absolute constant. We prove that
$A$ contains a triple $\{ (k,m), (k+d,m), (k,m+d) \}$, where
$d\neq 0$. This theorem is a two-dimensional generalization of
Szemer\'{e}di's theorem on arithmetic progressions.
\end{abstract}


\section{Introduction.}

    Szemer\'{e}di's theorem \cite{Sz} on arithmetic progressions states that
    an arbitrary set
    $A \subseteq \Z$ of positive density contains arithmetic progression of any length.
    This remarkable theorem has played a significant role in the development of two
    fields in mathematics : additive combinatorics (see e.g. \cite{Tao_Vu_book})
    and combinatorial ergodic theory (see e.g. \cite{Fu})
    A more precise statement of the theorem is as follows.

Let $N$ be a natural number. We set
\[
  a_k(N) = \frac{1}{N} \max \{ |A| ~:~ A \subseteq \{ 1,2, \dots, N \},
\]
\[
  A \mbox{ contains no arithmetic progressions of length  } k
   \},
\]
where $|A|$ denotes the cardinality of $A$.

\begin{theorem}[(Szemer\'{e}di, 1975)]
    For any $k\ge 3$ the following holds
\begin{equation}
  a_k(N) \to 0 \mbox{ as } N\to \infty \,.
\label{Sz_lab}
\end{equation}
\label{t:Szemeredi}
\end{theorem}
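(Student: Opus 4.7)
The plan is to prove this via a density-increment iteration. Suppose for contradiction that there exist $\alpha > 0$ and arbitrarily large $N$ admitting a set $A \subseteq \{1,\dots,N\}$ of density $\alpha$ that contains no $k$-term arithmetic progression. The goal at each step is to locate a long subprogression $P$ of $\{1,\dots,N\}$, of length at least $N^{c_k}$, on which $A$ has density at least $\alpha + \eta(\alpha)$ for some explicit positive function $\eta$. Since density is bounded by $1$, the iteration must terminate after $O(1/\eta(\alpha))$ steps, contradicting the hypothesis that $A$ survives at each stage.

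To produce the increment, I would first embed $[N]$ into a cyclic group $\Z/N'\Z$ of prime order comparable to $N$ and count $k$-APs in $A$. Writing $f = 1_A - \alpha \cdot 1_{[N]}$ and expanding the count multilinearly, the generalized von Neumann inequality shows that the deviation of the $k$-AP count in $A$ from the expected value $\sim \alpha^k N^2$ is controlled by the Gowers uniformity norm $\|f\|_{U^{k-1}}$. Since $A$ contains only trivial progressions, one obtains $\|f\|_{U^{k-1}} \geq c(\alpha) > 0$.

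Next comes the structural step. For $k=3$, largeness of $\|f\|_{U^2}$ is equivalent to a large nontrivial Fourier coefficient of $f$, and Roth's original pigeonhole argument on short subprogressions on which this character is nearly constant yields the density increment directly. For general $k$, one applies the inverse theorem for the Gowers $U^{k-1}$ norm: $f$ must correlate on a large subset with a polynomial phase of degree $k-2$ (Gowers), or more generally a degree $(k-2)$ nilsequence (Green--Tao--Ziegler). A Weyl-type equidistribution argument then partitions $[N]$ into long progressions on which this phase is approximately constant, and pigeonholing produces the required progression where $A$ has the elevated density.

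The principal obstacle throughout is the inverse theorem for higher Gowers norms. For $k=4$ this already requires the nontrivial $U^3$ inverse theorem of Gowers, invoking Freiman-type additive combinatorics, and for $k \geq 5$ one must either imitate Gowers' intricate inductive scheme or invoke the considerably deeper Green--Tao--Ziegler theorem on nilsequences on nilmanifolds. By comparison, the generalized von Neumann estimate, the Weyl partitioning step, and the density-increment iteration itself are all comparatively mechanical once the inverse theorem is granted.
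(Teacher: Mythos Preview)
The paper does not prove this statement. Theorem~\ref{t:Szemeredi} is quoted as background, attributed to Szemer\'edi~\cite{Sz}, and the surrounding text merely surveys the known approaches (Szemer\'edi's combinatorial argument, Furstenberg's ergodic proof, and Gowers' quantitative proof leading to Theorem~\ref{t:Gowers}). There is no proof in the paper against which to compare your proposal.

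Your outline is a reasonable sketch of the Gowers route, and the identification of the inverse theorem as the crux is correct. One point to sharpen: Gowers did \emph{not} show that large $\|f\|_{U^{k-1}}$ forces global correlation with a polynomial phase of degree $k-2$. What he proved is a \emph{local} inverse statement: after iterated Cauchy--Schwarz and Weyl differencing, together with Freiman--Ruzsa structure theory applied to the sets where many ``derivatives'' are large, one obtains correlation with degree-$(k-2)$ polynomial phases on many long subprogressions, and that already suffices for the density increment. The global inverse theorem with nilsequences is the later work of Green--Tao--Ziegler and is strictly stronger than what Gowers needed. Your sketch blurs these two, though you do mention both; if you intend to follow Gowers, the ``Weyl-type equidistribution to partition $[N]$'' step should be replaced by the local-inverse-plus-partition argument actually in \cite{Gow_m}.
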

Clearly, this result implies van der Waerden's theorem \cite{Wdv}.

   In the simplest case $k=3$ of Theorem \ref{t:Szemeredi} was proven
 by K.F. Roth \cite{Rt} in 1953, who applied the Hardy -- Littlewood method to show that
\[
  a_3(N) \ll \frac{1}{\log \log N} \,.
\]
   At present, the best upper bound for $a_3(N)$
   is due to J. Bourgain \cite{Bu_new}.
He proved that
\begin{equation}
 a_3(N) \ll \frac{(\log \log N)^2}{ (\log N)^{2/3} } \,.
\end{equation}

Szemer\'{e}di's proof uses difficult combinatorial arguments. An
alternative proof was suggested by Furstenberg in \cite{Fu} (see
also \cite{Fu}). His approach uses the methods of ergodic theory.
Furstenberg showed that Szemer\'{e}di's theorem is equivalent to
the multiple recurrence of almost all points in any dynamical
system.

  A. Behrend  \cite{Be} obtained the following lower bound for  $a_3(N)$
\[
  a_3(N) \gg \exp (-C (\log N)^{\frac{1}{2}} ) \,,
\]
where $C$ is an absolute constant. A lower bound on $a_k(N)$ for
an arbitrary $k$ was given in \cite{Ra}.

Unfortunately, Szemer\'{e}di's methods give very weak upper
estimates for $a_k(N)$. The ergodic approach gives no estimates at
all.
  Only in  2001
W.T. Gowers \cite{Gow_m} obtained a quantitative result concerning
the rate at which $a_k(N)$ approaches zero for $k\ge 4$. He proved
the following theorem.

\begin{theorem}
For any $k\ge 4$, we have
$$
    a_k(N) \ll 1/ (\log \log N)^{c_k} \,,
$$
where the constant $c_k$ depends on $k$ only. \label{t:Gowers}
\end{theorem}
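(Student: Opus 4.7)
The plan is to follow the density-increment strategy pioneered by Roth for $k=3$ and extended by Gowers to $k\ge 4$. The essential new ingredient beyond the $k=3$ case is a higher-order notion of uniformity, the Gowers $U^{k-1}$ norm, which controls the count of $k$-term progressions but is no longer detected by a single Fourier coefficient.

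First, for complex-valued functions $f$ on $\Z/N\Z$ I would define $\|f\|_{U^{k-1}}$ via iterated multiplicative derivatives $\Delta_h f(x) = f(x+h)\o{f(x)}$, and then establish the \emph{generalised von Neumann theorem}: writing $f_A = 1_A - \alpha$ with $\alpha = |A|/N$, the count of $k$-APs in $A$ differs from the expected count $\alpha^k N^2$ by at most $C\|f_A\|_{U^{k-1}} N^2$, so a $k$-AP-free set of density $\alpha$ must satisfy $\|f_A\|_{U^{k-1}}\gg \alpha^k$. For $k=3$ this specialises to the fact that $U^2$ coincides with an $\ell^4$ Fourier norm, and directly recovers Roth's argument.

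Second, and this is the heart of the matter, I would prove a quantitative \emph{inverse theorem} for the $U^{k-1}$ norm: if $\|f_A\|_{U^{k-1}}\ge \delta$, then $f_A$ correlates, with quality polynomial in $\delta$, with a phase $\exp(2\pi i\phi(x))$ in which $\phi$ is a polynomial of degree $k-2$, on some long sub-progression. The argument proceeds by repeated Cauchy--Schwarz to show that many additive derivatives $\Delta_h f_A$ correlate with linear phases $\exp(2\pi i \xi(h) x)$; a Balog--Szemer\'edi--Gowers step then upgrades a statistical additive relation among the frequencies $\xi(h)$ into genuine additive structure; Freiman's theorem places the relevant $h$'s inside a generalised arithmetic progression; and finally a Weyl-type equidistribution argument integrates the linear phases into a true polynomial of degree $k-2$ on a genuine sub-progression.

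Third, once such a polynomial-phase correlation on a sub-progression of length at least $N^{c(\delta)}$ is available, pigeonholing on the level sets of $\phi$ produces a further sub-progression on which the density of $A$ exceeds $\alpha + c\alpha^C$ for some $c,C$ depending only on $k$. Iterating this density increment $O(\alpha^{-O(1)})$ times forces $\alpha \to 1$ and, after tracking the length loss at each stage, yields the stated bound $a_k(N)\ll 1/(\log\log N)^{c_k}$. The main obstacle throughout is unquestionably the inverse theorem: for $k=3$ it is essentially free because $U^2$ is an $\ell^4$ Fourier norm, so large $U^2$ immediately supplies a large Fourier coefficient, but for $k\ge 4$ no such identity is available and one must substitute the full combinatorial theory of approximate groups, together with delicate polynomial equidistribution estimates, in order to extract a polynomial phase of degree $k-2$ from the purely additive information carried by the Gowers norm.
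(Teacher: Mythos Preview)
The paper does not actually prove this theorem; it is stated as background and attributed to Gowers \cite{Gow_m}, with no proof supplied in the text. There is therefore no ``paper's own proof'' to compare against. Your proposal is a faithful high-level outline of Gowers' original argument in \cite{Gow_m}: the generalised von Neumann inequality showing that a $k$-AP-free set has large $U^{k-1}$ norm, the inverse theorem extracting correlation with a degree-$(k-2)$ polynomial phase on a long subprogression via Balog--Szemer\'edi--Gowers and Freiman, and the resulting density increment iterated to obtain the double-logarithmic bound. As a sketch of the cited result this is accurate, though of course each of the three steps you name (especially the inverse theorem) is itself a substantial paper-length argument rather than a short lemma.
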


  In paper \cite{Sz2} and book \cite{Fu} the following problem was considered.
  Let $\{ 1,2, \dots, N \}^2$ be the two--dimensional lattice with basis  $\{(1,0)$, $(0,1)\}$.
Let also
\[
  L(N) = \frac{1}{N^2} \max \{~ |A| ~:~ A\subseteq \{ 1,2, \dots, N \}^2 ~\mbox{ and }~
\]
\[
  A \mbox { contains no triples of the form } \{ (k,m),~ (k+d,m),~ (k,m+d)  \}
\]
\begin{equation}\label{tri}
    \mbox { with positive } d \}.
\end{equation}
 A triple from (\ref{tri}) is called a
 {\it "corner".}
In \cite{Sz2,Fu} it was proven that  $L(N)$ tends to   $0$ as $N$
tends to infinity. W.T. Gowers (see \cite{Gow_m}) asked the
question of what is the rate of convergence of $L(N)$ to $0$.

The following theorem was proven in
\cite{Shkr_tri_LMS_DAN,Shkr_tri_LMS} (see also
\cite{Sol,Vu,Shkr_tri,Shkr_tri_DAN}).

\begin{theorem}
Let $\delta>0$, and $N\gg \exp \exp ( \d^{-73} )$. Let also $A$ be
a subset of $\{1,\dots,N\}^2$ of cardinality at least  $\delta
N^2$. Then $A$ contains a corner. \label{main_th-}
\end{theorem}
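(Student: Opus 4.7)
The plan is a density-increment argument in the spirit of Ajtai--Szemer\'edi. The key quantity is the corner counting functional
$$T(A)=\sum_{x,y,d}1_A(x,y)\,1_A(x+d,y)\,1_A(x,y+d),$$
whose expected value for a uniformly random set of density $\delta$ is of order $\delta^3 N^3$. If $A$ contains no nontrivial corner then $T(A)=\sum_{x,y}1_A(x,y)\ll \delta N^2$, which is much smaller than the expected value. First I would embed $\{1,\ldots,N\}^2$ into $\mathbf{Z}_N\times\mathbf{Z}_N$ (restricting to a sub-square of side $\sim N/4$ to kill wrap-around effects) and write $1_A=\delta+f$ inside $T(A)$, producing a main term of order $\delta^3 N^3$ plus seven error terms involving the balanced function $f$.

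Next I would bound every error term by a two-dimensional ``box'' norm
$$\|f\|_\square^{4}=\sum_{x,x',y,y'}f(x,y)\,f(x',y)\,f(x,y')\,f(x',y').$$
Repeated applications of Cauchy--Schwarz to the corner system $(x,y),(x+d,y),(x,y+d)$ --- averaging first over $d$, then over one of $x$ or $y$ --- allow each error term to be controlled by a positive power of $\|f\|_\square/N^2$. Under the ``no corner'' hypothesis this forces $\|f\|_\square\gg \delta^{C}\cdot N$ for some absolute $C$.

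The principal obstacle, and the heart of the argument, is the \emph{structural step}: converting a lower bound on $\|f\|_\square$ into a density increment. Unlike Roth's one-dimensional situation, where a large Fourier coefficient yields an increment on an arithmetic progression, a large box norm only guarantees an increment on some combinatorial rectangle $P\times Q$ with $|P|,|Q|\gg \delta^{O(1)}N$; this rectangle is not itself a grid. To iterate one must therefore locate a large grid-like subset inside $P\times Q$ --- for example by decomposing $P$ and $Q$ into long arithmetic progressions via Bogolyubov-type arguments, possibly after passing through Bohr sets, so that a positive proportion of the density gain is inherited by a genuine sub-grid $\{1,\ldots,N'\}^2$. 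Controlling the loss $N'/N$ with only polynomial overhead in $\delta$ is the crux that forces the quantitative exponent in the final bound.

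Finally, I would iterate the dichotomy. At each step either a nontrivial corner is found, or the density of $A$ inside a smaller grid increases by at least $c\delta^{C}$; after $O(\delta^{-C})$ such steps the density must exceed $1$, a contradiction. Each iteration shrinks the ambient side length $N$ by a factor of order $\exp(\delta^{-O(1)})$, so accumulating these losses over the full iteration yields exactly a tolerance of the form $N\gg \exp\exp(\delta^{-c})$, with the specific exponent $73$ emerging from careful optimisation of the constants in the box-norm bound and in the rectangle-to-grid passage.
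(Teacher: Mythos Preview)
Note first that the paper does not itself prove Theorem~\ref{main_th-}; it is quoted from \cite{Shkr_tri_LMS_DAN,Shkr_tri_LMS}, and the present paper establishes the abelian-group generalisation (Theorem~\ref{main_th}) by the same scheme, recovering a sharper form of Theorem~\ref{main_th-} via Lemma~\ref{l:Green's_trick}. I compare your sketch against that scheme.

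Your outline --- density increment driven by the rectilinear (box) norm, with a large box norm yielding an increment on a product $F_1\times F_2$ (this is Proposition~\ref{na_case_pr+}) --- is correct in broad strokes, but the iteration step as you describe it has a real gap. You propose to ``locate a large grid-like subset'' inside $F_1\times F_2$ and return to an honest sub-grid $\{1,\ldots,N'\}^2$. This is precisely what cannot be done: the sets $F_1,F_2$ are essentially arbitrary, and there is no mechanism for converting them back to a grid (or to long progressions) while keeping polynomial-in-$\delta$ control of the parameters. The actual argument never returns to a grid. The entire iteration takes place inside products $E_1\times E_2\subseteq(\Lambda+s_1)\times(\Lambda+s_2)$ with $\Lambda$ a regular Bohr set and with the $E_i$ satisfying an auxiliary Fourier-uniformity hypothesis ($(\alpha_0,\varepsilon)$-uniformity). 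That hypothesis is essential, not cosmetic: once the ambient container is $E_1\times E_2$ rather than the whole group, the Cauchy--Schwarz step controlling the corner count by the box norm only goes through when $E_1,E_2$ are Fourier-uniform (Theorems~\ref{t:tmp_tha} and~\ref{a_case}).

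Consequently each iteration requires a second, independent phase: after the increment on $F_1\times F_2$ one must \emph{regularise}, i.e.\ pass to a smaller Bohr set $\Lambda'$ and a translate on which shifts of $F_1,F_2$ become $(\alpha_0,\varepsilon)$-uniform again, while retaining a fixed fraction of the density gain (Proposition~\ref{prop:point_3}). The interplay of the increment phase (Theorem~\ref{t:Phase1}) and this regularisation phase --- each costing a controlled drop in the Bohr-set parameters --- is what produces the double-exponential bound and the specific exponent. Your sketch folds regularisation into a one-line ``Bogolyubov-type'' remark aimed at restoring a grid; in reality this phase is the technical heart of the proof, and the object one iterates on is a Bohr set, not a grid.
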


Thus, we have the estimate $L(N) \ll 1/ (\log \log N)^{1/73}$.

The question on upper estimates for $L(N)$ in the group
$\mathbf{F}_3^n$ was considered in \cite{Green_BCC} and
\cite{Lacey}.

A natural generalization of Theorem \ref{main_th-} above is
replace $\{1,\dots,N\}$ or $\Z/N\Z$ to an arbitrary Abelian group.
Such generalizations of Roth's theorem and Theorems
\ref{t:Szemeredi}, \ref{t:Gowers} were obtained in papers
\cite{Brown_Buhler,FGR_AP,Meshulam,Lev_AP,GT_U3}.

The main result of this paper is the following theorem.

Let $G$ be a finite Abelian group with additive group operation
$+$. In the case any triple of the form $\{ (k,m), (k+d,m),
(k,m+d) \}$, where $d\neq 0$ is called a corner.

\begin{theorem}
Let $G$ be a finite Abelian group and $A \subseteq G \m G$ be a
set of cardinality at least $|G|^2/(\log \log |G|)^c$, where $c>0$
is an absolute constant. Then $A$ contains a corner.
\label{main_th}
\end{theorem}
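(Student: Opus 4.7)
The plan is to extend the density-increment proof of Theorem~\ref{main_th-} from $\{1,\dots,N\}^2$ to an arbitrary finite abelian group $G$. Assume $A\subseteq G\m G$ has density $\d := |A|/|G|^2 \ge (\log\log|G|)^{-c}$ and contains no corner, and show that $A$ must admit a density increment of the shape $\d\mapsto \d+\Omega(\d^C)$ on a ``box'' of the form $(x_0,y_0)+B\m B$, where $B\subseteq G$ is a suitably chosen Bohr set. Iterating this increment forces the density to exceed $1$ within $O(\d^{-C})$ steps, yielding a contradiction provided the Bohr sets produced along the way remain large enough.

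\textbf{Counting corners by Fourier analysis.} Expand the corner count
\[
T(A) = \sum_{x,y,d\in G} 1_A(x,y)\,1_A(x+d,y)\,1_A(x,y+d)
\]
around its expected value $\d^3|G|^3$. A standard Cauchy--Schwarz argument --- equivalently, control by the two-dimensional box (or $U^2$) norm of $1_A-\d$ --- shows that if $T(A)$ is appreciably smaller than $\d^3|G|^3$ then some non-trivial character $\chi\in\F{G}\setminus\{0\}$ must generate a large marginal Fourier coefficient of $1_A$, either of the form $\F{1_A}(\chi,\cdot)$ or of the form $\F{1_A}(\cdot,\chi)$. Since $A$ is assumed corner-free one has $T(A)=|A|=\d|G|^2$, which is catastrophically smaller than $\d^3|G|^3$ as soon as $\d\gg|G|^{-1/2}$, so the large-coefficient conclusion is activated.

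\textbf{From a large Fourier coefficient to a Bohr increment.} Because $G$ need not contain long genuine arithmetic progressions (consider $G=\mathbf{F}_2^n$), the substitute for an AP is a regular Bohr set $B=B(\L,\rho)$ in the sense of Bourgain. Combining Chang's spectral theorem with Bogolyubov's lemma, one promotes a single large Fourier coefficient (or a richer spectrum of them) to a Bohr set of rank $|\L|\ll \d^{-O(1)}$ and radius $\rho\gg \exp(-\d^{-O(1)})$ on which $A$ has density at least $\d+c\d^C$ inside some translate of $B\m B$. Relativising the corner-counting identity to the box $B\m B$ --- either via a Freiman-type isomorphism with a genuine cyclic group or directly through Bohr-relative harmonic analysis --- allows the argument to be restarted inside $B$.

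\textbf{Iteration and the main obstacle.} The principal difficulty is quantitative book-keeping: at each stage the ambient Bohr set shrinks, and after $O(\d^{-C})$ iterations it must still contain substantially more than one point. Because the rank of the accumulated Bohr set grows additively while its radius shrinks multiplicatively, the size of the surviving set is of order $|G|\cdot\exp(-\d^{-O(1)})$. Non-triviality therefore requires $|G|\gg \exp\exp(\d^{-O(1)})$, which is precisely the double-logarithmic threshold $\d\gg (\log\log|G|)^{-c}$ asserted in the theorem. Verifying that the regularity, spectral, and covering estimates transfer faithfully through this iteration --- in particular choosing regular Bohr sets at each step so that Fourier computations in $G$ and in $B$ remain compatible --- is the technical heart of the argument.
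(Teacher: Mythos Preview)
Your outline has the right high-level shape (density increment on Bohr boxes, iterate, bookkeeping gives the $\log\log$ bound), but the key analytic step is wrong, and the rest of the argument does not recover from it.

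\textbf{The gap.} You assert that a deficit in the corner count $T(A)$ forces a large \emph{Fourier} coefficient of $1_A$ (in one marginal variable), and then feed that coefficient into Chang/Bogolyubov to produce a Bohr box. For three-term progressions in one dimension this is exactly right: the $U^2$ norm is an $\ell^4$ Fourier norm, so ``large $U^2$'' $\Leftrightarrow$ ``large $\widehat f(\xi)$''. For corners the controlling object is the \emph{rectilinear} (box) norm
\[
\|f\|_\Box^4=\sum_{x,x',y,y'} f(x,y)\overline{f(x',y)}\,\overline{f(x,y')}f(x',y'),
\]
and this is \emph{not} governed by Fourier coefficients of $f$ on $G\times G$ or by one-variable marginals. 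The paper flags this explicitly in the introduction: there exist sets with all Fourier coefficients genuinely small but with $\|f\|_\Box$ large. So the implication ``few corners $\Rightarrow$ large $\widehat{1_A}(\chi,\cdot)$'' simply fails, and with it your route to a Bohr increment via Chang/Bogolyubov.

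\textbf{What actually happens.} The correct inverse statement (Proposition~\ref{na_case_pr+}) is combinatorial, not Fourier: if $\|f\|_\Box$ is large then $A$ has a density increment on some \emph{product set} $F_1\times F_2$ with $F_1\subseteq E_1$, $F_2\subseteq E_2$. These $F_i$ are arbitrary subsets, not Bohr sets. Bohr sets enter for a different reason: one must restart the argument inside $F_1\times F_2$, but the corner-counting lemma (Theorem~\ref{t:tmp_tha}/\ref{a_case}) only works when the ambient factors $E_1,E_2$ are $\alpha$-uniform relative to a Bohr set. Section~5 (Proposition~\ref{prop:point_3}) is devoted to \emph{regularising} $F_1,F_2$: passing to a smaller Bohr set $\Lambda'$ and a translate on which $(F_i-t_i)\cap\Lambda'$ become uniform again, at the cost of growing the Bohr rank by $O(\sigma^{-3}\beta^{-5})$. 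It is this regularisation loop---not a Fourier-to-Bohr step---that drives the rank growth and hence the $\exp\exp(\delta^{-O(1)})$ threshold.

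A secondary issue: the Freiman-isomorphism option you mention for relativising to $B\times B$ does not preserve corners, since a corner involves the diagonal constraint $(x,y)\mapsto(x+d,y+d)$ and not just separate additive structure in each coordinate. The paper works directly with Bohr-relative counting (the $(\alpha,\varepsilon)$-uniformity machinery of \S3) rather than transferring to a model group.
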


\begin{note*}
The constant $c$ in Theorem \ref{main_th} might  be taken as
$1/22$.
\end{note*}


  The proof of Theorem \ref{main_th} is contained in   \S 3,4,5,6
  and  proceeds by an iteration scheme
  as in all known effective proofs of Szemer\'{e}di--type theorems.

Let $G$ be an Abelian group, $A\subseteq G \m G$, $|A| \gg
|G|^2/(\log \log |G|)^c$ and we want to find a corner in $A$. At
each step of  our procedure we prove the following :  either $A$
is "sufficiently regular" or its "density" can be increased. A
suitable definition of  "sufficiently regular"\, sets (so--called
uniform sets) is one of the main aims of our proof.

If $A$ is a random set and $A$ has cardinality $\delta |G|^2$ then
it is easy to see that $A$ contains approximately $\d^3 N^3$
corners. We shall say $A$ is regular (or in other words {\it
$\a$--uniform}) if $A$ contains the same approximate number of
corners.

Let $E_1, E_2$ be subsets of $\L$,  where $\L \subseteq G$ to be
chosen later. Let $A$ be a subset of $E_1 \m E_2$ of cardinality
$\d |E_1| |E_2|$. We shall say that $A$ is {\it rectilinearly
$\a$--uniform} if, roughly speaking, the number of quadruples $\{
(x,y), (x+d,y), (x,y+s), (x+d,y+s) \}$ in $A^4$ is at most $(\d^4
+ \a) |E_1|^2 |E_2|^2$, $\a>0$ (in fact we need a slightly
different definition of $\a$--uniformity, which depends on the set
$\L$). In \S 3 we prove that if $E_1$, $E_2$ has small Fourier
coefficients and $A$ is rectilinearly $\a$--uniform then $A$ has
about the expected number of corners. Simple observation shows
(see e.g. \cite{Shkr_tri_LMS}) that the notion of rectilinearly
$\a$--uniformity cannot be expressed in terms of Fourier
transform, more precisely, there is a set, say $A_0$, with really
small Fourier coefficients but large number of quadruples $\{
(x,y), (x+d,y), (x,y+s), (x+d,y+s) \} \in A_0^4$. On the other
hand, we can define a rectilinearly $\a$--uniform set using
so--called rectilinear norm (see \S 3).

Suppose that $A$ fails to be rectilinearly $\a$--uniform. Roughly
speaking, it means that $A$ has no random properties. The last
observation can be expressed precisely by showing that $A$ has
increased density $\d + c(\d)$, $c(\d) > 0$ on some product set
$F_1 \m F_2$, $F_1 \subseteq E_1$, $F_2 \subseteq E_2$ (see \S 4).
Clearly, this density increment can only occur finitely many
times, because the density of any set does not exceed one. Thus,
our iteration scheme must stop after finite number of steps. It
means that we find a rectilinearly $\a$--uniform subset of the set
$A$ and consequently a corner in $A$.

 Unfortunately, the structure of $F_1 \m F_2$ need not be regular
 and we cannot make the next step of our procedure directly.
 To make $F_1 \m F_2$ regular, we pass to a subset of $\L$, say, $\L'$ and a vector $\v{t}=(t_1,t_2) \in G\m G$ such that
 $(F_1 - t_1) \cap \L'$, $(F_2 - t_2) \cap \L'$ has small Fourier coefficients.


 We are now in the situation we started with, but $A$ has a larger density
 and we iterate the procedure.
 This also can only occur finitely many times.
 In \S 6 we combine the arguments from the earlier sections and show that they give the bound
 that we stated in Theorem \ref{main_th}.

  In our prove we chose $\L$ to be a {\it Bohr set} (see \cite{Bu,Bu_new,Green_Sz_Ab} and others).
  Note that the best upper bound for $a_3 (N)$ was proven by J. Bourgain in \cite{Bu_new} using
  exactly
  these very sets.
  The properties of Bohr sets will be considered in \S 2.

  The constructions which we use develop the approach
  of \cite{Bu,Gow_m,Shkr_tri,Shkr_tri_LMS}.
  We improve our constant $c$ by more accurate calculations than in \cite{Shkr_tri_LMS}.

  In our forthcoming papers we are going to obtain a multidimensional
  analog of Theorem \ref{main_th}.

\begin{acknowledgements}
  The author is grateful to Professor N.G. Moshchevitin
  and
  Professor Ben Green.
\end{acknowledgements}


\section{ On Bohr sets.}

Let $G=(G,+)$ be a finite Abelian group with additive group
operation $+$. Suppose that $A$ is a subset of $G$. It is very
convenient to write $A(x)$ for such a function. Thus $A(x)=1$ if
$x\in A$ and $A(x)=0$ otherwise. By $\F{G}$ denote the Pontryagin
dual of $G$, in other words the space of homomorphisms $\xi$ from
$G$ to ${\bf T}$, $\xi : x \to \xi \cdot x$. It is well known that
$\F{G}$ is an additive group which is isomorphic to $G$. Also
denote by $N$ the cardinality of $G$.

One of the crucial moments in \cite{Bu} was the notion of Bohr
set.

  Let $S$ be a subset of $\F{G}$, $|S| = d$, $\eps>0$ be a real number.

\begin{definition}
Define the Bohr set  $\Lambda = \Lambda (S, \eps)$ by
\[
  \Lambda (S, \eps) = \{ n \in G ~|~ \| \xi \cdot n\| < \eps \mbox{ for all } \xi \in S \} \,.
\]
\end{definition}

We shall say that the set $S\subseteq \F{G}$ is  {\it generative
set} of Bohr set $\L$. The number  $d$~ is called  {\it dimension
} of Bohr set $\L$ and is denoted by $\dim \L$. If $M = \L + n$,
$n\in G$ is a translation of  $\L$, then, by definition, put $\dim
M = \dim \L$.

Another construction of Bohr set (so--called {\it smoothed } Bohr
set) was given in \cite{Tao_lect} and  \cite{Green_Sz_Ab}.

\begin{definition}
Let $0<\kappa<1$ be a real number. A Bohr set $\Lambda = \Lambda
(S, \eps)$ is called {\it regular}, if for an arbitrary $\eps'$
such that
\[
   | \eps - \eps'|  < \frac{\k}{100 d} \eps
\]
we have
\[
  1-\k < \frac{|\L (S,\eps')| }{ |\L (S,\eps) | } < 1+ \k \,.
\]
\end{definition}

We need  several results concerning Bohr sets (see \cite{Bu} and
\cite{Green_Sz_Ab}).

\begin{lemma}
Let $\L (S,\eps)$ be a Bohr set, $|S| = d$. Then
\[
  |\L (S,\eps)| \ge \eps^d N \,.
\]
\label{l:Bohr_est}
\end{lemma}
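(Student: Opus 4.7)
The plan is a classical averaging--pigeonhole argument in the dual group. Introduce the homomorphism $\Phi : G \to {\bf T}^d$ defined by $\Phi(n) = (\xi \cdot n)_{\xi \in S}$, so that by definition $\L(S,\eps) = \Phi^{-1}(B)$, where $B = \{ x \in {\bf T}^d : \|x_i\| < \eps \mbox{ for each } i \}$. The goal is to locate a translate of a half--open box of Haar measure $\eps^d$ in ${\bf T}^d$ whose $\Phi$--preimage is large, and then produce elements of $\L$ by differencing.

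Concretely, let $C = [0,\eps)^d \subset {\bf T}^d$, a half--open cube of Haar measure $\eps^d$. Averaging $|\Phi^{-1}(C+t)|$ over $t \in {\bf T}^d$ and swapping the order of summation yields
\[
  \int_{{\bf T}^d} |\Phi^{-1}(C+t)|\, dt \;=\; \sum_{n \in G} \int_{{\bf T}^d} {\bf 1}_C(\Phi(n) - t)\, dt \;=\; \eps^d N.
\]
Since the integrand is piecewise constant in $t$, there exists $t_0 \in {\bf T}^d$ with $T := \Phi^{-1}(C+t_0)$ satisfying $|T| \ge \eps^d N$. For any $n_1, n_2 \in T$, the difference $\Phi(n_1)-\Phi(n_2)$ lies in $C-C \subseteq (-\eps,\eps)^d$; in particular $\|\xi\cdot(n_1-n_2)\|<\eps$ for every $\xi\in S$, and hence $n_1-n_2 \in \L(S,\eps)$. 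Fixing any $n_0 \in T$, the map $n \mapsto n-n_0$ injects $T$ into $\L(S,\eps)$, so that $|\L(S,\eps)| \ge |T| \ge \eps^d N$.

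No substantial obstacle arises in this argument. The only minor point worth attention is the choice of the half--open cube $C = [0,\eps)^d$ rather than a closed one, which ensures $C - C \subseteq (-\eps,\eps)^d$ strictly and is compatible with the strict inequality in the definition of $\L(S,\eps)$. An entirely equivalent formulation replaces the averaging step by a direct pigeonhole on $N$ points of ${\bf T}^d$ partitioned into $\lceil 1/\eps \rceil^d$ cubes of side $\eps$; the continuous averaging is slightly cleaner and gives the constant $\eps^d$ exactly.
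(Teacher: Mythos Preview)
Your argument is correct and is precisely the standard averaging/pigeonhole proof of this bound; the paper itself does not give a proof but merely cites \cite{Bu} and \cite{Green_Sz_Ab}, where essentially the same argument appears. One tiny caveat: the computation $\mathrm{vol}(C)=\eps^d$ and the containment $C-C\subseteq(-\eps,\eps)^d$ tacitly assume $\eps\le 1$, but this is the only regime in which the statement is used (and indeed the only regime in which it is true).
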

\begin{lemma}
Let $0 < \k < 1$ be a real number, and $\L (S,\eps)$ be a Bohr
set. Then there exists $\eps_1$ such that $
  \frac{\eps}{2} < \eps_1 < \eps
$ and such that  $\L (S,\eps_1)$ is a regular Bohr set.
\label{l:Reg_B}
\end{lemma}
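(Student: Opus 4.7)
The plan is to establish regularity by a pigeonhole argument applied to the monotonically non-decreasing function $\eta \mapsto |\L(S,\eta)|$ on the interval $\eta \in [\eps/2, \eps]$. The quantitative input I shall need is a doubling-type estimate of the form $|\L(S,\eps)|/|\L(S,\eps/2)| \leq 2^{O(d)}$, bounding the total variation of $\log |\L(S,\cdot)|$ over this range uniformly in $\eps$ and $N$. This can be proved by a covering argument: $\L(S, 2\eta)$ is covered by a bounded (in terms of $d$) number of translates of $\L(S, \eta/2)$ all lying inside $\L(S, 5\eta/2)$, and counting the translates via the $d$-dimensional character structure yields the desired ratio bound.

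Granted this, I would subdivide $[\eps/2, \eps]$ geometrically by setting $\eta_j := \eps \cdot 2^{-j/M}$ for $j = 0, 1, \dots, M$. Telescoping,
\[
\sum_{j=0}^{M-1} \log \frac{|\L(S, \eta_j)|}{|\L(S, \eta_{j+1})|} \;=\; \log \frac{|\L(S, \eps)|}{|\L(S, \eps/2)|} \;=\; O(d),
\]
so averaging produces an index $j$ for which the corresponding summand is at most $O(d/M)$. I then choose $M$ to be a suitable constant multiple of $d/\k$: large enough that this summand is at most $\log(1+\k/2)$, and small enough that the geometric ratio $\eta_j/\eta_{j+1} = 2^{1/M}$ exceeds $(1 + \k/(100 d))^2$. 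Both requirements reduce to $M$ being comparable to $d/\k$ up to absolute constants, and because the doubling constant in paragraph one is $O(1)^d$ (not $\eps$-dependent), they are simultaneously satisfiable.

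Finally I take $\eps_1 := \sqrt{\eta_j \eta_{j+1}}$, the geometric midpoint of the good subinterval. By the second condition on $M$, every $\eps'$ with $|\eps' - \eps_1| < \k \eps_1/(100d)$ lies in $[\eta_{j+1}, \eta_j]$, and monotonicity then sandwiches
\[
1 - \k \;\leq\; \frac{|\L(S,\eta_{j+1})|}{|\L(S,\eta_j)|} \;\leq\; \frac{|\L(S,\eps')|}{|\L(S,\eps_1)|} \;\leq\; \frac{|\L(S,\eta_j)|}{|\L(S,\eta_{j+1})|} \;\leq\; 1 + \k,
\]
which is precisely the regularity condition. The main obstacle here is the uniform doubling estimate itself: the naive lower bound $|\L(S,\eta)| \geq \eta^d N$ from Lemma \ref{l:Bohr_est} would only yield a total variation bound of order $d \log(2/\eps)$, forcing $M$ to grow like $\log(1/\eps)$ and destroying the balance between averaging and neighbourhood width. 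The covering estimate is what eliminates the dependence on $\eps$ and renders the whole argument genuinely uniform.
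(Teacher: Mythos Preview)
The paper does not actually prove this lemma; it is stated without proof as a known result, with attribution to Bourgain \cite{Bu} and Green \cite{Green_Sz_Ab}. Your approach is precisely the standard one found in those references (and in Tao--Vu): pigeonhole on the non-decreasing function $\eta \mapsto \log|\L(S,\eta)|$ over a geometric subdivision of $[\eps/2,\eps]$, the crucial quantitative input being a uniform doubling estimate $|\L(S,2\eta)| \le C^d |\L(S,\eta)|$ with $C$ absolute. You correctly identify that the lower bound of Lemma~\ref{l:Bohr_est} alone would introduce an unwanted $\log(1/\eps)$ factor, and your balancing of the two constraints at $M \asymp d/\k$ is right; with the doubling constant $C^d$ one needs roughly $d\log C/\k \le M \le 50 d\log 2/\k$, which is nonempty since $\log C$ is an absolute constant.

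One point in your sketch deserves care. The phrase ``covered by a bounded number of translates of $\L(S,\eta/2)$ lying inside $\L(S,5\eta/2)$'' describes a Ruzsa-type covering, but that step by itself is circular: it only yields $|\L(S,2\eta)|/|\L(S,\eta)| \le |\L(S,5\eta/2)|/|\L(S,\eta/2)|$. The genuine content is your parenthetical ``counting the translates via the $d$-dimensional character structure''. Concretely: under the homomorphism $\phi\colon G\to\mathbb{T}^d$, $x\mapsto(\xi\cdot x)_{\xi\in S}$, one has $|\L(S,\eta)| = |\ker\phi|\cdot|\phi(G)\cap B_\eta|$ where $B_\eta$ is the open $\ell^\infty$-cube of radius $\eta$. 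Now either cover $B_{2\eta}\subseteq\mathbb{T}^d$ by $4^d$ translates $c_i+B_{\eta/2}$ and recentre each nonempty intersection $\phi(G)\cap(c_i+B_{\eta/2})$ at one of its own points (so that it sits inside a translate of $\phi(G)\cap B_\eta$), or take a maximal $\L(S,\eta/2)$-separated subset of $\L(S,2\eta)$, note that its image under $\phi$ is $\eta$-separated in $\ell^\infty$ inside $B_{2\eta}$, and bound its cardinality by a packing argument in $\mathbb{R}^d$. Either route gives $|\L(S,2\eta)|\le C^d|\L(S,\eta)|$ with $C\le 5$, after which your pigeonhole and sandwich argument goes through exactly as written.
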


{\it All Bohr sets will be regular in the article.}

\begin{definition}
Let $f,g$ be functions from $G$ to $\bf{C}$. By $f * g$ define the
function
\[
  ( f*g )(n) = \sum_{s\in G} f(s) g(n-s) \,.
\]
\end{definition}

\begin{definition}
Let $\eps \in (0,1]$ be a real number, and $\L (S,\eps_0)$ be a
Bohr set, $S \subseteq \F{G}$, $|S| = d$. A regular Bohr set $\L'
= \L (S', \eps')$ is called an {\it $\eps$--attendant } of $\L$ if
$S \subseteq S'$ and $\eps \eps_0 /2 \le \eps' \le \eps \eps_0$.
\end{definition}
Lemma \ref{l:Reg_B} implies that for an arbitrary Bohr set there
exists its $\eps$--attendant.

We shall consider that $S'=S$ unless stated otherwise.

Let $n$ be an arbitrary element of the group $G$, and $\L$ be a
Bohr set. We shall say that a Bohr set $\L'$ is an
$\eps$--attendant of $\L+n$, if $\L'$ is an $\eps$--attendant of
$\L$.

The following lemma is also due to J. Bourgain \cite{Bu}. We give
his proof for the sake of completeness.

\begin{lemma}
Let $\kappa>0$ be a real number, $S \subseteq \F{G}$, $\L = \L (S,
\eps)$ be a regular Bohr set, and $\L' = \L (S, \eps')$ its
$\kappa / (100 d)$--attendant. Then the number of $n's$ such that
$( \L * \L' ) (n) > 0$ does not exceed  $|\L|(1+\kappa)$, the
number of $n's$ such that $(\L * \L') (n) = |\L'|$ is greater than
$|\L|(1-\kappa)$ and
\begin{equation}\label{2k}
  \Big\| \frac{1}{|\L'|} (\L * \L')(n) - \L(n) \Big\|_1 < 2\kappa |\L| \,.
\end{equation}
\label{l:L_pm}
\end{lemma}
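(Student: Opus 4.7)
The plan is to decode the convolution combinatorially and then squeeze the three claims out of the regularity hypothesis applied at the two nearby radii $\eps+\eps'$ and $\eps-\eps'$. Observe first that $(\Lambda*\Lambda')(n) = \#\{(s,t) : s\in\Lambda,\ t\in\Lambda',\ s+t=n\}$ (using that a Bohr set is symmetric, so $\Lambda' = -\Lambda'$). Hence $(\Lambda*\Lambda')(n) > 0$ iff $n\in\Lambda+\Lambda'$, and this sum trivially never exceeds $|\Lambda'|$.

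For the first claim I would use the triangle inequality on characters: if $n = s+t$ with $s\in\Lambda(S,\eps)$ and $t\in\Lambda(S,\eps')$, then $\|\xi\cdot n\|\le \|\xi\cdot s\|+\|\xi\cdot t\| < \eps+\eps'$, so $\Lambda+\Lambda'\subseteq \Lambda(S,\eps+\eps')$. Since $\Lambda'$ is a $\kappa/(100d)$--attendant, $\eps'\le \kappa\eps/(100d)$, and the regularity of $\Lambda$ immediately gives $|\Lambda(S,\eps+\eps')| < (1+\kappa)|\Lambda|$, yielding claim one.

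For the second claim, I would note that $(\Lambda*\Lambda')(n) = |\Lambda'|$ exactly when $n-\Lambda'\subseteq \Lambda$. A sufficient condition is $n\in\Lambda(S,\eps-\eps')$, because then, for any $t\in\Lambda'$, $\|\xi\cdot(n-t)\|<(\eps-\eps')+\eps' = \eps$. Regularity of $\Lambda$ applied at the lower radius gives $|\Lambda(S,\eps-\eps')| > (1-\kappa)|\Lambda|$, which is claim two.

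For the $L^1$ estimate I would partition the contributions: on $n\notin \Lambda+\Lambda'$ the integrand vanishes; on $n\in\Lambda\cap\Lambda(S,\eps-\eps')$ it also vanishes by the argument above; and on the remaining $n$ the integrand is bounded by $1$. So the $L^1$ norm is at most $|(\Lambda+\Lambda')\setminus\Lambda| + |\Lambda\setminus\Lambda(S,\eps-\eps')|$, and using $\Lambda\subseteq\Lambda+\Lambda'\subseteq\Lambda(S,\eps+\eps')$ together with the two regularity bounds already invoked, each of these terms is at most $\kappa|\Lambda|$, giving the desired $2\kappa|\Lambda|$. The only subtle point is ensuring $\Lambda(S,\eps-\eps')\subseteq\Lambda$ (immediate) and $\Lambda\subseteq\Lambda+\Lambda'$ (which holds because $0\in\Lambda'$); otherwise the argument is essentially a direct application of the definition of regular Bohr set.
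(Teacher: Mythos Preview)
Your proof is correct and follows essentially the same route as the paper: both arguments identify the support of $\Lambda*\Lambda'$ as lying in the slightly larger Bohr set $\Lambda(S,\eps+\eps')$ (the paper's $\Lambda^{+}$), show the convolution attains its maximal value $|\Lambda'|$ on the slightly smaller Bohr set $\Lambda(S,\eps-\eps')$ (the paper's $\Lambda^{-}$), and then bound the $L^1$--difference by the size of the annulus $\Lambda^{+}\setminus\Lambda^{-}$, which regularity controls by $2\kappa|\Lambda|$. Your decomposition of the annulus into $(\Lambda+\Lambda')\setminus\Lambda$ and $\Lambda\setminus\Lambda(S,\eps-\eps')$ is a harmless cosmetic variant of the paper's single $|\Lambda^{+}|-|\Lambda^{-}|$ bound.
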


\begin{proof*}
If $(\L * \L^{'}) (n) > 0$ then there exists $m$ such that for any
$\xi \in S$, we have
\begin{equation}\label{tM2}
    \| \xi \cdot m \| < \frac{\k}{100d} \eps , \quad  \| \xi \cdot (n-m) \| < \eps \,.
\end{equation}
Using (\ref{tM2}), we get for all $\xi \in S$
\begin{equation}\label{}
  \| \xi \cdot n \|
             < \Big( 1 + \frac{\k}{100d} \Big) \eps \,,
\end{equation}
for all $\xi \in S$. It follows that
\begin{equation}\label{L+}
  n\in \L^{+} := \L \left( S, \left( 1 + \frac{\k}{100d} \right) \eps \right)  \,.
\end{equation}
By Lemma \ref{l:Reg_B} we have $|\L^{+}| \le (1+\k) |\L|$.

On the other hand, if
\begin{equation}\label{L-}
  n\in \L^{-} := \L \left( S, \left( 1 - \frac{\k}{100d} \right) \eps \right)
\end{equation}
then $(\L * \L^{'}) (n) = |\L'|$. Using Lemma \ref{l:Reg_B}, we
obtain $|\L^{-}| \ge (1-\k) |\L|$.

Let us prove (\ref{2k}). We have
\[
  \Big\| \frac{1}{|\L'|} (\L * \L')(n) - \L(n) \Big\|_1
    =
  \Big\| \frac{1}{|\L'|} (\L * \L')(n) - \L(n) \Big\|_{ l^1 ( \L^{+} \setminus \L^{-}) }
\]
\[
  \le
  |\L^{+}| - |\L^{-}| < 2 \k |\L|
\]
as required.
\end{proof*}

\begin{corollary*}
Lemma  \ref{l:L_pm} implies that  $|\L| \le |\L + \L'| \le (1+2\k)
|\L|$.
\end{corollary*}

\begin{note*} Let $\L^x (n) = \L (n-x)$.
Since  $(\L^x * \L') (n) = (\L * \L')(n-x)$, it follows that
(\ref{2k}) takes place for translations   $\L + x$.
\end{note*}

\begin{definition}
By $\L^{+}$ and $\L^{-}$ denote the Bohr sets defined in
(\ref{L+}) and (\ref{L-}), respectively, $\L^{-} \subseteq \L
\subseteq \L^{+}$.
\end{definition}

By Lemma \ref{l:L_pm} we have $|\L^{+}| \le |\L|(1+\kappa)$ and
$|\L^{-}| \ge |\L|(1-\kappa)$. Note that for any $s\in \L^{'}$, we
get $\L^{-} \subseteq \L + s$.

Suppose  $\L \subseteq G$ is  a Bohr set, and $\v{x} = (x_1, x_2)$
belongs to $G\m G$. By $\L + \v{x}$ denote the set $(\L+x_1) \m
(\L+x_2) \subseteq G\m G$. Let $\v{n} \in G\m G$. Let $\L(\v{n})$
denote the characteristic function of $\L \m \L$. We shall write
$\v{s} \in \L$, $\v{s} = (s_1, s_2)$, if $s_1 \in \L$ and $s_2 \in
\L$.

\begin{lemma}
  Suppose $\L$ is a Bohr set, $\L^{'}$ is its $\eps$--attendant, $\eps = \k/ (100 d)$, $\v{x}$ is a vector,
  and $E \subseteq G\m G$.
  Then
  \begin{equation}\label{}
    \Big| \d_{\L+ \v{x}} (E) -  \frac{1}{|\L|^2} \sum_{\v{n} \in \L+ \v{x}} \d_{\L^{'} + \v{n}} (E)  \Big|
        \le 4\k \,.
  \end{equation}
\label{l:smooth_d}
\end{lemma}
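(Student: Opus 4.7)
The plan is to express both quantities as linear functionals against the indicator $E(\vec{m})$, factor the two–dimensional expressions as products of one–dimensional convolutions, and then apply the $L^1$ estimate \eqref{2k} from Lemma \ref{l:L_pm}.

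First I would unfold the averaged density. Swapping the order of summation gives
\[
  \frac{1}{|\L|^2} \sum_{\v{n} \in \L+\v{x}} \d_{\L' + \v{n}}(E)
  = \frac{1}{|\L|^2 |\L'|^2} \sum_{\v{m}} E(\v{m}) \sum_{\v{n}} (\L+\v{x})(\v{n}) \L'(\v{m}-\v{n}).
\]
Because $\L(\v{n}) = \L(n_1)\L(n_2)$, the inner sum factors as $(\L*\L')(m_1 - x_1)(\L*\L')(m_2 - x_2)$, using the note after Lemma \ref{l:L_pm}. On the other hand,
\[
  \d_{\L+\v{x}}(E) = \frac{1}{|\L|^2} \sum_{\v{m}} E(\v{m}) \L(m_1 - x_1) \L(m_2 - x_2).
\]

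Next, I would set $F(n) := |\L'|^{-1} (\L*\L')(n)$ and $g(n) := \L(n)$. The difference to be estimated then becomes
\[
  \frac{1}{|\L|^2} \sum_{\v{m}} E(\v{m}) \bigl[ F(m_1 - x_1) F(m_2 - x_2) - g(m_1 - x_1) g(m_2 - x_2) \bigr].
\]
Applying the identity $ab - cd = (a-c)b + c(b-d)$, bounding $|E(\v{m})| \le 1$, and separating variables, the modulus of this expression is at most
\[
  \frac{1}{|\L|^2} \bigl( \|F - g\|_1 \|F\|_1 + \|g\|_1 \|F - g\|_1 \bigr).
\]

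Finally, the hypothesis $\eps = \k/(100d)$ means that $\L'$ is the $\k/(100d)$–attendant of $\L$, so Lemma \ref{l:L_pm} yields $\|F - g\|_1 < 2\k |\L|$. Since $\|F\|_1 = \|g\|_1 = |\L|$ by a direct computation, the bound $4\k$ follows. The argument is essentially bookkeeping; the only non–trivial input is the one–dimensional estimate \eqref{2k}, and the slight subtlety is simply recognising that the two–dimensional convolution factors over the coordinates, so that the one–dimensional lemma may be applied in each variable separately.
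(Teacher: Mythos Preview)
Your proof is correct and follows essentially the same route as the paper: unfold the average, recognise the two--dimensional convolution as a product of one--dimensional convolutions, and apply the $L^1$ estimate \eqref{2k} from Lemma~\ref{l:L_pm} in each coordinate to pick up $2\k + 2\k = 4\k$. The paper compresses the $ab-cd$ step and the factorisation into the single phrase ``Using Lemma~\ref{l:L_pm}'', but the computation you spell out is exactly what is meant.
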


\begin{proof*}
We have
\[
  \sigma =
  \frac{1}{|\L|^2} \sum_{\v{n} \in \L+ \v{x}} \d_{\L^{'} + \v{n}} (E)
    =
  \frac{1}{|\L|^2 |\L^{'}|^2} \sum_{\v{s}} E(\v{s}) \sum_{\v{n}} \L(\v{n} - \v{x}) \L^{'} (\v{s} - \v{n})
\]
\[
    =
  \frac{1}{|\L|^2 |\L^{'}|^2} \sum_{\v{s}} E(\v{s}) \sum_{\v{n}} \L(\v{n} ) \L^{'} (\v{s} - \v{x} - \v{n})
\]
Using Lemma \ref{l:L_pm}, we get
\[
  \sigma = \frac{1}{|\L|^2} \sum_{\v{s}} E(\v{s}) \L( \v{s} - \v{x} ) + 4 \vartheta \k
         = \d_{\L+ \v{x}} (E) + 4 \vartheta \k \,,
\]
where  $|\vartheta | \le 1$. This completes the proof.
\end{proof*}

\begin{note*}
Clearly, the one--dimension analog of Lemma \ref{l:smooth_d} takes
place.
\end{note*}

Let $\L_1 = \L (S_1,\eps_1)$, $\L_2 = \L (S_2,\eps_2)$ be two Bohr
sets, $S_1, S_2 \subseteq \F{G}$. We shall write $\L_1 \le \L_2$,
if $S_1 \subseteq S_2$ and $\eps_1 \le \eps_2$.



\section{On $\alpha$--uniformity.}

%
%
%
%

Let $f$ be a function from $G$ to  $\mathbf{C}$, $N = |G|$. By
$\F{f}(\xi)$ denote the Fourier transformation of $f$
\[
  \F{f}(\xi) =  \sum_{x \in G} f(x) e( -\xi \cdot x) \,,
\]
where $e(x) = e^{2\pi i x}$. We shall use the following basic
facts
\begin{equation}\label{F_Par}
    \sum_{x\in G} |f(x)|^2 = \frac{1}{N} \sum_{\xi \in \F{G}} |\widehat{f} (\xi)|^2 \,.
\end{equation}
\begin{equation}\label{F_Par_sc}
    \sum_{x\in G} f(x) \overline{g(x)} = \frac{1}{N} \sum_{\xi \in \F{G}} \F{f}(\xi) \overline{\F{g}(\xi)} \,.
\end{equation}
\begin{equation}\label{svertka}
    \sum_{y\in G} |\sum_{x\in G} f(x) g(y-x) |^2
        = \frac{1}{N} \sum_{\xi \in \F{G}} |\widehat{f} (\xi)|^2 |\widehat{g} (\xi)|^2 \,.
\end{equation}

  Let $\Lambda$ be a Bohr  set,  and $A$ be an arbitrary subset of $\Lambda$.
  Let $|A| = \d |\Lambda|$.
  Define the {\it balanced} function of $A$ to be $f(s) = ( A(s) - \d) \L (s) = A(s) - \d \L (s)$.
\\
Let ${\bf D}$ denote the closed disk of radius $1$ centered at $0$
in the complex plane. Let $R$ be an arbitrary set. We write  $f: R
\to {\bf D}$ if $f$ is zero outside $R$.
\\
The following definition is due to Gowers \cite{Gow_m}.

\begin{definition}
  A function $f : \L \to {\bf D}$ is called {\it $\a$--uniform} if
\begin{equation}
 \| \F{f} \|_{\infty}  \le \a |\Lambda| \,.
\label{a_d0}
\end{equation}
\label{d:basic}
\end{definition}

We say that  $A$ is $\a$--uniform if its balanced function  is.

We shall write $\sum_s$ instead of $\sum_{s\in G}$ and
$\sum_{\xi}$ instead of $\sum_{\xi\in \F{G}}$.

Let us prove an analog of Lemma 2.2 from \cite{Gow_m}.

\begin{lemma}
Let $\L$ be a Bohr set, and let  $f : \L \to {\bf D}$ be an
$\a$--uniform function. Then we have
\[
   \sum_k |\sum_s f(s) g(k-s) |^2
     \le \a^2 |\L|^2 \| g \|^2_2 \,,
\]
for an arbitrary function $g$, $g: G \to {\bf D}$. \label{aux_l}
\end{lemma}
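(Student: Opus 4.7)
The plan is to reduce everything to Parseval and the convolution identity already stated as (\ref{svertka}) and (\ref{F_Par}). The left-hand side is exactly the $L^2$-norm of the convolution $f*g$ (where we extend $f$ by zero outside $\L$), so applying (\ref{svertka}) directly converts the sum into a Fourier expression
\[
  \sum_k \Bigl| \sum_s f(s) g(k-s) \Bigr|^2
     = \frac{1}{N} \sum_{\xi} |\F{f}(\xi)|^2 \, |\F{g}(\xi)|^2 \, .
\]

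Next I would pull $|\F{f}(\xi)|^2$ out of the sum in the most wasteful way possible, using the $\a$-uniformity hypothesis: Definition \ref{d:basic} gives $\|\F{f}\|_\infty \le \a |\L|$, so $|\F{f}(\xi)|^2 \le \a^2 |\L|^2$ for every $\xi \in \F{G}$. Substituting yields
\[
  \frac{1}{N} \sum_{\xi} |\F{f}(\xi)|^2 \, |\F{g}(\xi)|^2
     \le \a^2 |\L|^2 \cdot \frac{1}{N} \sum_{\xi} |\F{g}(\xi)|^2 \, .
\]

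Finally I would apply Parseval (\ref{F_Par}) to $g$ to replace the Fourier sum by $\|g\|_2^2$, which gives the claimed bound $\a^2 |\L|^2 \|g\|_2^2$. There is no real obstacle here: the argument is a direct two-line application of Parseval combined with the pointwise sup-bound on $\F{f}$, and the hypothesis $g : G \to {\bf D}$ is not even used beyond the fact that $\|g\|_2$ is finite (it would automatically give $\|g\|_2^2 \le |G|$ if one wanted a cruder form). The only thing to be careful about is interpreting $f$, which is defined on $\L$, as a function on $G$ by zero extension so that (\ref{svertka}) applies; this is harmless since $\F{f}$ is the same object in both interpretations.
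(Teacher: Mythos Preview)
Your proof is correct and follows essentially the same route as the paper: apply the convolution identity (\ref{svertka}), bound $|\F{f}(\xi)|^2$ uniformly by $\a^2|\L|^2$ using Definition~\ref{d:basic}, and finish with Parseval (\ref{F_Par}). There is nothing to add.
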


\begin{proof*}
By  (\ref{svertka}) we get
\begin{equation}\label{s:1}
  \sum_k |\sum_s f(s) g(k-s) |^2 =
  \sum_\xi |\widehat{f} (\xi)|^2 |\widehat{g} (\xi)|^2 \,.
\end{equation}
Since the function $f$ is $\a$--uniform, it follows that $\| \F{f}
\|_{\infty}  \le \a |\Lambda|$. Using this inequality and
(\ref{F_Par_sc}), we have
\begin{equation}\label{}
   \sum_k |\sum_s f(s) g(k-s) |^2 \le \a^2 |\L|^2 \frac{1}{N} \sum_\xi |\widehat{g} (\xi)|^2
   = \a^2 |\L|^2 \| g \|^2_2 \,.
\end{equation}
This completes the proof.
\end{proof*}

\begin{corollary}
Let $S \subseteq G$  be a set, and $\L^{'}$ be a Bohr set. Suppose
$E\subseteq \L^{'}$ is $\a$--uniform, and $E$ have the cardinality
$\d |\L^{'}|$. Let  $g$ be a function from $S$ to $[-1,1]$. Then
for all but $\a^{2/3} |S|$ choices of $k$ we have
\[
   \Big| (E * g) (k) - \d ( \L^{'} * g)(k) \Big| \le \a^{2/3} |\L^{'}| \,.
\]
\label{c:sv}
\end{corollary}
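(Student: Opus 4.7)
The plan is to apply Lemma \ref{aux_l} to the balanced function of $E$ and then convert the resulting $\ell^2$ bound into a pointwise bound via Chebyshev's inequality.

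First I would set $f(s) = E(s) - \d \L'(s)$, the balanced function of $E$. By the hypothesis that $E$ is $\a$-uniform we have $\|\F{f}\|_\infty \le \a |\L'|$, so $f : \L' \to \mathbf{D}$ is an $\a$-uniform function in the sense of Definition \ref{d:basic}. A direct expansion of the convolutions shows
\[
  (E * g)(k) - \d (\L' * g)(k) = (f * g)(k),
\]
so the quantity we wish to control is exactly $(f * g)(k)$.

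Next I would apply Lemma \ref{aux_l} to $f$ and $g$. Since $g$ is supported on $S$ and takes values in $[-1,1]$, we have $\|g\|_2^2 \le |S|$, and hence
\[
  \sum_k |(f * g)(k)|^2 \le \a^2 |\L'|^2 |S|.
\]
Then, by Chebyshev's inequality, the number of $k$ for which $|(f * g)(k)| > \a^{2/3} |\L'|$ is at most
\[
  \frac{\sum_k |(f * g)(k)|^2}{\a^{4/3} |\L'|^2} \le \frac{\a^2 |\L'|^2 |S|}{\a^{4/3} |\L'|^2} = \a^{2/3} |S|,
\]
which is precisely the conclusion of the corollary.

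This is essentially a routine deduction, so I do not expect a serious obstacle. The only mild subtlety is making sure that $g$ extended by zero outside $S$ still satisfies $g : G \to \mathbf{D}$ so that Lemma \ref{aux_l} applies, and that the rewriting $(E * g)(k) - \d (\L' * g)(k) = (f*g)(k)$ is done using the bilinearity of convolution — both are immediate. The choice of the exponent $2/3$ is simply what balances the two factors of $\a$ arising from Chebyshev and from Lemma \ref{aux_l}, and any other split would give a weaker but analogous statement.
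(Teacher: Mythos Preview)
Your proof is correct and follows essentially the same route as the paper: define the balanced function $f$, rewrite the difference as $f*g$, apply Lemma~\ref{aux_l} to bound $\sum_k |(f*g)(k)|^2 \le \a^2 |\L'|^2 |S|$, and then use Chebyshev. The paper's version is terser (it leaves the Chebyshev step implicit), but the argument is the same.
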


\begin{proof*}
Let  $f$ be the balanced function of $E\cap \L^{'}$. Using Lemma
\ref{aux_l}, we get
\begin{equation}
     \sum_k | ( E * g) (k) - \d ( \L^{'} * g)(k) |^2 =
     \sum_k |\sum_s f(s) g(k-s) |^2 \le
\end{equation}
\begin{equation}\label{}
     \le \a^2 |\L^{'}|^2 \| g \|^2_2 \le \a^2 |\L^{'}|^2 |S| \,.
\end{equation}
This concludes the proof.
\end{proof*}


  Let $\L_1$ and $\L_2$ be Bohr sets,
  and  $E_1\m E_2$ be  a subset of $\L_1 \m \L_2$.
Suppose  $f:\L_1\m \L_2 \to {\bf D}$ is a function.

\begin{definition}
Let $\alpha$ be a real number, $\a \in [0,1]$. A function $f : E_1
\m E_2\to {\bf D}$ is called {\it rectilinearly  $\a$--uniform} if
\begin{equation}
  \sum_{x,x',y,y'} f(x,y) \overline{f(x',y)}
  \overline{f(x,y')} f(x',y') \le \a |E_1|^2 |E_2|^2.
\label{a_d1}
\end{equation}
\label{a_d1_ref}
\end{definition}
Note that the function $f$ is $\a$--uniform iff
\begin{equation}
  \sum_{m,p} |\sum_k f(k,m) \overline{f(k,p)}|^2 \le \a |E_1|^2 |E_2|^2.
\label{a_d2}
\end{equation}

  Let $A$ be a subset of  $E_1\m E_2$, $|A| = \d |E_1| |E_2|$.
Define the {\it balanced} function of $A$ to be $f(x,y)=( A(x,y)-
\delta ) \cdot (E_1\m E_2) (x,y)$.
  We say that $A\subseteq E_1\m E_2$ is rectilinearly $\a$--uniform if its balanced function is.

  Let $f$ be an arbitrary function, $f : G \m G \to \mathbf{C}$.
Define $\| f \|$ by the formula
\begin{equation}
  \| f \| =
  \Big| \sum_{x,x',y,y'} f(x,y) \overline{f(x',y)}
  \overline{f(x,y')} f(x',y') \Big|^{\frac{1}{4}}
\label{norm}
\end{equation}

\begin{lemma}
$\| \cdot \|$ is a norm. \label{l:norm}
\end{lemma}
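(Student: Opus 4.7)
The plan is to verify the three nontrivial axioms of a norm, after first making one observation that simplifies everything. The quantity under the absolute value is automatically real and nonnegative, since rearranging the sum gives
\[
  \sum_{x,x',y,y'} f(x,y) \overline{f(x',y)} \overline{f(x,y')} f(x',y')
  = \sum_{y,y'} \Big| \sum_x f(x,y) \overline{f(x,y')} \Big|^2,
\]
so one can drop the outer $|\cdot|$ in the definition. Homogeneity $\|\lambda f\| = |\lambda|\,\|f\|$ is then immediate because the sum inside is 4-homogeneous in $f$. For definiteness, if $\|f\|=0$ then $\sum_x f(x,y)\overline{f(x,y')}=0$ for all $y,y'$; specializing to $y=y'$ yields $\sum_x |f(x,y)|^2=0$ for every $y$, hence $f\equiv 0$.

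The main work is the triangle inequality. Following the standard box-norm scheme, I introduce the multilinear form
\[
  \langle f_1,f_2,f_3,f_4 \rangle
  = \sum_{x,x',y,y'} f_1(x,y)\,\overline{f_2(x',y)}\,\overline{f_3(x,y')}\,f_4(x',y'),
\]
so that $\|f\|^4 = \langle f,f,f,f\rangle$. The central lemma, a Cauchy-Schwarz-Gowers inequality, is
\[
  |\langle f_1,f_2,f_3,f_4 \rangle| \le \|f_1\|\,\|f_2\|\,\|f_3\|\,\|f_4\|.
\]
The proof is by two applications of Cauchy-Schwarz. First, write the sum as $\sum_{x,x'} \bigl(\sum_y f_1(x,y)\overline{f_2(x',y)}\bigr)\bigl(\overline{\sum_{y'} f_3(x,y')\overline{f_4(x',y')}}\bigr)$ and apply Cauchy-Schwarz in the pair $(x,x')$; this decouples $(f_1,f_2)$ from $(f_3,f_4)$ and produces two factors of the form $\sum_{x,x'}|\sum_y f_i(x,y)\overline{f_j(x,y)}|^2$-type. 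A second Cauchy-Schwarz inside each factor, now in the internal $(y,y')$ variable, symmetrizes each factor and, after taking fourth roots, yields exactly $\prod_i \|f_i\|$.

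Granted the lemma, the triangle inequality follows by expanding $\|f+g\|^4 = \langle f+g,f+g,f+g,f+g \rangle$ into $16$ multilinear terms. Bounding each by the lemma gives
\[
  \|f+g\|^4 \le \sum_{k=0}^{4} \binom{4}{k} \|f\|^{4-k}\|g\|^k = (\|f\|+\|g\|)^4,
\]
and taking fourth roots finishes the proof. The only real obstacle is bookkeeping in the key lemma: one must choose the order of the two Cauchy-Schwarz applications (first in $(x,x')$, then in $(y,y')$) so that after the symmetrization every $f_i$ appears in exactly one resulting factor; any other order, or any sloppy pairing, breaks the symmetry and fails to produce the clean product $\prod_i\|f_i\|$.
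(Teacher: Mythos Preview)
Your proof is correct and is exactly the standard Gowers--Cauchy--Schwarz argument for the box norm. The paper itself does not give a proof here; it simply refers to \cite{Shkr_tri}, where the same argument appears, so your approach matches the intended one.
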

\begin{proof*}
See \cite{Shkr_tri}.
\end{proof*}

\begin{definition}
Let $\L$ be a Bohr set, $Q \subseteq \L$, $|Q| = \d |\L|$,
$\a,\eps$ are positive numbers. A set $Q$ is called {\it
$(\a,\eps)$--uniform} if there exists $\L'$ such that $\L'$ is an
$\eps$--attendant set of $\L$ and the set
\[
   B := \{ m \in \L  ~|~  \| (Q \cap (\L'+m) - \d (\L'+m) ) \F{} ~ \|_{\infty}  \ge \a |\L'| \}
\]
has the cardinality at most $\a |\L|$
\begin{equation}\label{c:sm_B}
  |B| \le \a |\L| \,,
\end{equation}
further
\begin{equation}\label{c:sm_sq}
  \frac{1}{|\L|} \sum_{m\in \L} |\d_{\L'+m} (Q) - \d |^2 \le \a^2 \,.
\end{equation}
and
\begin{equation}\label{c:sm_total_F}
  \| ( Q \cap \L - \d \L ) \F{} ~ \|_{\infty} \le \a |\L| \,.
\end{equation}
\end{definition}

Certainly, this definition depends on $\L$. and $\L'$. We do not
assume that $\L'$ has the same generative set as $\L$. If $Q$ is
$(\a,\eps)$--uniform and $\L'$ is an $\eps$--attendant set of $\L$
then we shall mean sometimes that $\L'$ is an $\eps$--attendant
set of $\L$ such that (\ref{c:sm_B}) --- (\ref{c:sm_total_F})
hold.

\begin{note*}
Let
\[
  B^* = \{ m \in \L  ~|~  | \d_{\L'+m} (Q) - \d |  \ge \a^{2/3}  \} \,.
\]
Condition (\ref{c:sm_sq}) implies  that $|B^*| \le \a^{2/3} |\L|$.
\end{note*}

\begin{note*}
Condition (\ref{c:sm_total_F}) is not so important as
(\ref{c:sm_B}) and (\ref{c:sm_sq}). The inequality
\[
  \| ( Q \cap \L - \d \L ) \F{} ~ \|_{\infty} \le 4 \a |\L|
\]
follows from (\ref{c:sm_B}), (\ref{c:sm_sq}) (see Proposition
\ref{l:l2}).
\end{note*}

Let $\L_1$, $\L_2$ be Bohr sets, $\L_1 \le \L_2$, $\eps>0$ be a
real number. Let also $E_1$, $E_2$ be subsets of $\L_1$, $\L_2$,
respectively,  and $|E_1| = \beta_1 |\L_1|$, $E_2 = \beta_2
|\L_2|$.

\begin{definition}
A function $f : E_1 \m E_2 \to {\bf D} $ is called {\it
rectilinearly $(\a, \eps)$--uniform} if there exists $\L'$ such
that $\L'$ is an $\eps$--attendant of $\L_1$ and
\[
  \| f \|_{\L_1\m \L_2, \eps}^4 = \sum_{i\in \L_1} \sum_{j\in \L_2}
                                  \sum_k \sum_{m,u}
                                  \L' (m-k-i) \L' (u-k-i) ~\m
\]
\begin{equation}\label{f:f_for_norm}
                                  |\sum_r \L' (k+r-j) f(r,m) f(r,u) |^2
                                  \le
                                  \a \beta_1^2 \beta_2^2 |\L'|^4 |\L_1|^2 |\L_2| \,.
\end{equation}
\end{definition}



Let $\L_1$, $\L_2$ be Bohr sets, $\L_1 \le \L_2$. Let also $E_1$,
$E_2$ be subsets of $\L_1$, $\L_2$, respectively,  and $|E_1| =
\beta_1 |\L_1|$, $E_2 = \beta_2 |\L_2|$.

\begin{definition}
Let  $A \subseteq E_1 \m E_2$, $|A| = \d \beta_1 \beta_2 |\L_1|
|\L_2|$, and $f(\v{s}) = A(\v{s}) - \delta (E_1 \m E_2) (\v{s})$.
$A$ is called {\it rectilinearly $(\a, \a_1, \eps)$--uniform} if
there exist $\L'$, $\L'_\eps$ such that $\L'$ is an
$\eps$--attendant of $\L_1$, $\L'_\eps$ is an $\eps$--attendant of
$\L'$ and the set
\[
  B = \{ l \in \L_1 ~|~ \| f_l \|^4_{\L' \m \L_2,\eps} >
         \a \beta_1^2 \beta_2^2 |\L'_\eps|^4 |\L'|^2 |\L_2| \} \,,
\]
where $f_l (\v{s}) := f(s_1 + l, s_2) \L'(s_1) $, $l\in \L_1$ has
the cardinality at most $\a_1 |\L_1|$, \label{def:rectangle}
\end{definition}

Note that
\[
  \| f_l \|^4_{\L' \m \L_2,\eps} = \sum_{i\in \L'} \sum_{j\in \L_2}
                                   \sum_k \sum_{m,u}
                                   \L'' (m-k-i) \L'' (u-k-i)
                                   |\sum_r \L'' (k+r-j) f_l (r,m) f_l(r,u) |^2
\]
\[
                                  =
                                   \sum_{i\in \L' + l} \sum_{j\in \L_2}
                                   \sum_k \sum_{m,u}
                                   \L'' (m-k-i) \L'' (u-k-i) ~\m
                                   |\sum_r \L'' (k+r-j) \t{f}_l (r,m) \t{f}_l(r,u) |^2 \,,
\]
where $\L'' = \L'_\eps$ and $\t{f}$ is a restriction of  $f$ to
$(\L'+l) \m \L_2$.

\begin{note*}
We need parameter $\a_1$ to decrease the constant $c$ in Theorem
\ref{main_th}. To obtain Theorem \ref{main_th} with $c$ equals,
say, $1000$, one can put  $\a_1 = \a$.
\end{note*}

\begin{lemma}
  Let $\L$ be a Bohr set.
  Suppose $\L'$ is an $\eps$--attendant of $\L$,
  $\L''$ is an $\eps$--attendant $\L'$  and an $\eps^2$--attendant of $\L$,
  $\eps = \a^2 / 4 (100 d)$,
  $Q \subseteq \L$, $|Q| = \d \L$, and $\a>0$.
  Let
  \[
    \Omega_1 = \{ s\in \L ~|~ | \d_{\L'+s} (Q) - \d | \ge 4 \a^{1/2}
    \mbox{ or }~ \frac{1}{|\L'|} \sum_{n\in \L'+s} | \d_{\L'' + n} (Q) - \d |^2 \ge 4 \a^{1/2} \} \,.
  \]
  \[
    \Omega_2 = \{ s\in \L ~|~ \| (Q \cap (\L' + s)  - \d (\L'+s) ) \F{} ~ \|_{\infty} \ge 4 \a^{1/4} |\L'| \}  \,.
  \]
1)
  If
  \begin{equation}\label{c:T1}
    \frac{1}{|\L|} \sum_{n\in \L} | \d_{\L'' + n} (Q) - \d |^2 \le \a^2 \,,
  \end{equation}
  then   $ |\Omega_1| \le 4 \a^{1/2} |\L|$.
  \\
2)
  If
  \begin{equation}\label{c:T2}
    \Omega^* = \{ s\in \L ~|~ \| (Q \cap (\L'' + s) - \d (\L''+s) ) \F{} ~ \|_{\infty} \ge \a |\L''| \}
  \end{equation}
  has the cardinality at most $\a |\L|$,
  then   $ |\Omega_2| \le 4 \a^{1/2} |\L|$.
  \\
3)
  Suppose $Q$ is $(\a,\eps^2)$--uniform subset of $\L$
  and $\L''$ is an $\eps^2$--attendant of $\L$ such that (\ref{c:sm_B}) --- (\ref{c:sm_total_F}) hold.
  Let
  \[
    \t{\Omega} = \{ s\in \L ~|~ \mbox{ Set } ( Q - s ) \cap \L' \mbox{ is not }
                                (8 \a^{1/4},\eps)\mbox{--uniform } \} \,.
  \]
  Then $|\t{\Omega}| \le 8 \a^{1/2} |\L|$.
\label{l:intermediate}
\end{lemma}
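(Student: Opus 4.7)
The idea is to approximate $\d_{\L'+s}(Q)$ by the average of $\d_{\L''+n}(Q)$ over $n \in \L'+s$ using the one-dimensional analog of Lemma \ref{l:smooth_d}, applicable because $\L''$ is an $\eps$-attendant of $\L'$ with $\eps = \a^2/(400d)$, i.e.\ $\k = \a^2/4$. This gives $|\d_{\L'+s}(Q) - |\L'|^{-1}\sum_{n\in\L'+s}\d_{\L''+n}(Q)| \le 4\k$ for every $s$. Cauchy--Schwarz followed by summation over $s\in\L$, reversal of the order of summation, and the regularity bound $|\L^+\setminus\L^-|\le 2\k|\L|$ reduces the resulting double sum to $\sum_{n\in \L}|\d_{\L''+n}(Q)-\d|^2 + O(\k|\L|) \ll \a^2|\L|$ by hypothesis (\ref{c:T1}). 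Markov's inequality applied separately to each of the two defining conditions of $\Omega_1$ then gives $|\Omega_1|\le 4\a^{1/2}|\L|$.

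\textbf{Plan for part (2).} The analogous Fourier-side argument uses Lemma \ref{l:L_pm} in the form $\|\L'(\cdot - s) - |\L''|^{-1}(\L'*\L'')(\cdot - s)\|_1 \le 2\k|\L'|$ to obtain, for each $\xi$ and $s$,
\[
|(Q\cap(\L'+s)-\d(\L'+s))\F(\xi)| \le |\L''|^{-1}\sum_{n\in s+\L'}|(Q\cap(\L''+n)-\d(\L''+n))\F(\xi)| + 2\k|\L'|.
\]
Bound the inner Fourier coefficient by $\a|\L''|$ when $n\notin \Omega^*$ and by $|\L''|$ otherwise. Taking the supremum over $\xi$ yields $\|(Q\cap(\L'+s)-\d(\L'+s))\F\|_\infty \le \a|\L'|+|(s+\L')\cap\Omega^*|+ 2\k|\L'|$, so membership of $s$ in $\Omega_2$ forces $|(s+\L')\cap \Omega^*| \ge 2\a^{1/4}|\L'|$. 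Summing over $s\in\Omega_2$ and swapping summation order gives $2\a^{1/4}|\L'|\,|\Omega_2| \le |\L'|\,|\Omega^*| \le \a|\L|\,|\L'|$, hence $|\Omega_2|\le 4\a^{1/2}|\L|$.

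\textbf{Plan for part (3).} Take $\L''$ as the $\eps$-attendant of $\L'$ witnessing $(8\a^{1/4},\eps)$-uniformity of $(Q-s)\cap\L'$. A translation-by-$s$ shows that its three defining conditions (with relative density $\d_s := \d_{\L'+s}(Q)$) become, up to harmless boundary terms: (i) $|\L'|^{-1}\sum_{n\in s+\L'}|\d_{\L''+n}(Q)-\d_s|^2 \le 64\a^{1/2}$, (ii) $|\{m\in\L': s+m\in\Omega^*\}| \le 8\a^{1/4}|\L'|$, and (iii) $\|(Q\cap(\L'+s)-\d_s(\L'+s))\F\|_\infty \le 8\a^{1/4}|\L'|$. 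For $s\notin\Omega_1$, part (1) provides $|\d-\d_s|\le 4\a^{1/2}$; this plus the triangle inequality reduces (i) to the second defining condition of $\Omega_1$. For (ii), Markov applied to $\sum_{s\in\L}|\{m\in\L': s+m\in\Omega^*\}| \le |\L'|\,|\Omega^*|\le \a|\L|\,|\L'|$ fails for at most $4\a^{1/2}|\L|$ values of $s$. For (iii), combine part (2) with $|\d-\d_s|\le 4\a^{1/2}$. Hence $\t{\Omega}\subseteq \Omega_1\cup\Omega_2\cup(\text{Markov-bad set for (ii)})$ and $|\t{\Omega}|\le 8\a^{1/2}|\L|$.

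The principal obstacle is bookkeeping: every smoothing error produced by Lemmas \ref{l:L_pm} and \ref{l:smooth_d}, together with the boundary contributions $|\L^+\setminus\L^-|$, must be absorbed below the target thresholds $4\a^{1/2}$ (parts (1)--(2)) and $8\a^{1/4}$ (part (3)). The choice $\eps=\a^2/(400d)$ is calibrated precisely so that a smoothing error of order $\k=\a^2/4$ translates, after Markov's inequality, into a loss of $\a^{1/2}$ in the final bad-set measure.
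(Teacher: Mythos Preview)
Your approach is essentially the paper's: both smooth through the attendant $\L''$ (via Lemma~\ref{l:smooth_d} for densities, Lemma~\ref{l:L_pm} for Fourier), swap the order of summation, and finish with a Markov/Chebyshev bound. The paper packages this through level sets---defining $B_s=\{n\in\L'+s:|\d_{\L''+n}(Q)-\d|\ge\a^{1/2}\}$, $B=\{s:|B_s|\ge\a^{1/2}|\L'|\}$, and Fourier analogues $B'_s,B'$ with threshold $\a^{1/4}$---and shows $\Omega_1\subseteq B$, $\Omega_2\subseteq B'$, then $\t\Omega\subseteq B\cup B'$; your direct Markov is an equivalent rearrangement.

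One bookkeeping point in part (3): you invoke three exceptional sets ($\Omega_1$, $\Omega_2$, and a separate Markov set for your condition (ii)), and summing the stated bounds $4\a^{1/2}|\L|$ apiece gives $12\a^{1/2}|\L|$, overshooting the target $8\a^{1/2}|\L|$. The paper sidesteps this because its single set $B'$ handles both (ii) and (iii): $s\notin B'$ gives $|B'_s|<\a^{1/4}|\L'|$, which is exactly (ii), and the smoothing identity you wrote in part (2) then yields (iii). Your route is easily repaired---your own Markov arguments in fact give $|\Omega_2|$ and the (ii)-bad set of order $\a^{3/4}|\L|$, well under $8\a^{1/2}|\L|$---but you should record the sharper intermediate bounds rather than quoting $4\a^{1/2}|\L|$ from the lemma statement.
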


\begin{proof*}
Let us prove $1)$. Let $\d'_n = \d_{\L' + n} (Q)$, $\d''_n =
\d_{\L'' + n} (Q)$, $\k = \a^2 / 4$, and $\epsilon = \a^{1/2}$.
Consider the sets
\[
  B_s = \{  n \in \L' + s ~|~ | \d''_n - \d | \ge \epsilon \} \,, \quad
  G_s = \{  n \in \L' + s ~|~ | \d''_n - \d | < \epsilon \} \,, s\in \L
\]
and the sets
\[
  B = \{ s \in \L ~|~ |B_s| \ge \epsilon |\L'| \} \,, \quad
  G = \{ s \in \L ~|~ |B_s| < \epsilon |\L'| \} \,.
\]
If $s\in G$ then $|B_s| < \epsilon |\L'|$. Using Lemma
\ref{l:smooth_d}, we have
\[
  | \d'_s - \d | \le \Big| \frac{1}{|\L'|} \sum_{x\in \L'+s} \d''_x   -   \d \Big| + 4 \k
  \le
    \frac{1}{|\L'|}  \sum_{x\in \L'+s} |\d''_x   -   \d | + 4 \k
  \le
\]
\begin{equation}\label{tmp:17:17}
  \le
    \frac{1}{|\L'|}  \sum_{x\in B_s} |\d''_x   -   \d | + \frac{1}{|\L'|}  \sum_{x\in G_s} |\d''_x   -   \d | + 4 \k
  <
      \epsilon + \frac{\epsilon |G_s|}{|\L'|} + 4 \k
  \le
      4 \epsilon \,.
\end{equation}
Besides that for $s\in G$, we get
\begin{equation}\label{tmp:17:17+}
  \frac{1}{|\L'|} \sum_{x\in \L'+s} |\d''_x   -   \d |^2
    \le
        \frac{1}{|\L'|} \sum_{x\in B_s} |\d''_x   -   \d |^2
            +
        \frac{1}{|\L'|} \sum_{x\in G_s} |\d''_x   -   \d |^2
            \le
            \epsilon + \epsilon^2 \le 2 \epsilon \,.
\end{equation}
Let us estimate the cardinality of $B$. We have
\[
 \a^2 \ge
        \frac{1}{|\L|} \sum_{s\in B} |\d''_s - \d|^2
    \ge
        \frac{1}{|\L'| |\L|} \sum_{s\in B} \sum_{n \in \L'+s} |\d''_n - \d|^2 - 4\k
    \ge
\]
\[
    \ge
        \frac{1}{|\L'| |\L|} \sum_{s\in B} \sum_{n \in B_s} |\d''_n - \d|^2 - 4 \k
    \ge
        \frac{|B| \epsilon^3 |\L'|}{|\L'| |\L|}  - 4\k \,.
\]
It follows that, $|B| \le 4 \a^{1/2} |\L|$. Using
(\ref{tmp:17:17}), (\ref{tmp:17:17+}) we get $\Omega_1 \subseteq
B$ and  $1)$ is proven.
To prove $2)$ it suffices to note that                                                                  
\[
  \frac{1}{|\L||\L'|} \sum_{s\in \L} \| (Q \cap (\L'' + s)  - \d (\L''+s) ) \F{} ~  \|_{\infty}
      =
  \frac{1}{|\L||\L'|} \sum_{s\in \Omega^*} \| (Q \cap (\L'' + s) - \d (\L''+s) ) \F{} ~  \|_{\infty}
    +
\]
\[
    +
  \frac{1}{|\L||\L'|} \sum_{s\in ( \L \setminus \Omega^* ) } \| (Q \cap (\L'' + s) - \d (\L'' + s)) \F{} ~  \|_{\infty}
      \le
            \a + \frac{\a |\L'|}{|\L||\L'|} |\L \setminus \Omega^*|
      \le 2\a \,.
\]
and define the sets $B'_s$, $G'_s$, $B'$, $G'$ ~:~
\[
  B'_s = \{  n \in \L' + s ~|~ \| (Q \cap (\L'' + n) - \d (\L''+n) ) \F{} ~ \|_{\infty} \ge \epsilon_1 |\L''| \} \,,
\]
\[
  G'_s = \{  n \in \L' + s ~|~ \| (Q \cap (\L'' + n) - \d (\L''+n) ) \F{} ~ \|_{\infty} < \epsilon_1 |\L''| \}, \,~ s\in \L \,.
\]
\[
  B' = \{ s \in \L ~|~ |B_s| \ge \epsilon_1 |\L'| \}
  \quad \mbox{ and } \quad
  G' = \{ s \in \L ~|~ |B_s| < \epsilon_1 |\L'| \} \,,
\]
where $\epsilon_1 = \a^{1/4}$. After that we can apply the same
arguments as above, using Lemma \ref{l:L_pm} instead of Lemma
\ref{l:smooth_d}.

Let us prove $3)$. Since  $Q$ is $(\a,\eps^2)$--uniform subset of
$\L$, it follows that $Q$ satisfies (\ref{c:T1}).
Also we have $|\Omega^*| \le \a |\L|$, and $|B|, |B'| \le 4
\a^{1/2} |\L|$ (see above). It is easily shown that for all $s
\notin B \cup B'$ the set $( Q - s ) \cap \L'$ is $(8
\a^{1/4},\eps)$--uniform. This completes the proof.
\end{proof*}

In the same way we can prove

\begin{proposition}
  Let $\L$ be a Bohr set,  and $E\subseteq \L$, $|Q| = \d |\L|$ be  $(\a, \eps)$--uniform,
  $\eps = \a / 4 (100 d)$.
  Then
  \begin{equation}\label{}
   \| (Q \cap \L - \d \L) \F{} ~ \|_{\infty} < 4 \a |\L| \,.
  \end{equation}
\label{l:l2}
\end{proposition}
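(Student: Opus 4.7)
The plan is to deduce the global Fourier bound on $Q - \d\L$ from the local Fourier bound (\ref{c:sm_B}) by a smoothing argument built on Lemma \ref{l:L_pm}. Let $\L'$ be the $\eps$--attendant of $\L$ witnessing $(\a,\eps)$--uniformity of $Q$. Since $\eps = \a/(4\cdot 100 d)$, setting $\k = 100 d \eps = \a/4$ makes $\L'$ a $\k/(100d)$--attendant of $\L$, so Lemma \ref{l:L_pm} gives the $\ell^1$--smoothing identity
\[
  \L(x) = \frac{1}{|\L'|}(\L * \L')(x) + \eta(x), \qquad \sum_x |\eta(x)| < 2\k|\L| = \frac{\a|\L|}{2}.
\]

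Since $Q \subseteq \L$, we have $Q(x) - \d \L(x) = (Q(x)-\d)\L(x)$ for every $x$, so substituting the smoothing identity yields
\[
  Q(x) - \d\L(x) \;=\; \frac{1}{|\L'|}(Q(x)-\d)(\L * \L')(x) \;+\; (Q(x)-\d)\eta(x).
\]
Taking Fourier transforms and using $|Q(x)-\d|\le 1$, the contribution of the second (error) term to $\widehat{(Q-\d\L)}(\xi)$ is uniformly bounded by $\a|\L|/2$. For the main term, I expand $(\L * \L')(x)=\sum_m \L(m)\L'(x-m)$ and exchange the order of summation; for each fixed $m\in\L$ the inner sum in $x$ is exactly $(Q\cap(\L'+m) - \d(\L'+m))\F{}\,(\xi)$. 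Hence
\[
  \Bigl|\widehat{(Q-\d\L)}(\xi)\Bigr|
    \;\le\; \frac{1}{|\L'|} \sum_{m\in \L}
        \bigl|(Q\cap(\L'+m) - \d(\L'+m))\F{}\,(\xi)\bigr|
      \;+\; \frac{\a|\L|}{2}.
\]

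Now split the $m$--sum according to whether $m$ lies in the exceptional set $B$ from (\ref{c:sm_B}). For $m\in\L\setminus B$ the summand is at most $\a|\L'|$ by the definition of $(\a,\eps)$--uniformity, contributing at most $\a|\L|$ after dividing by $|\L'|$; for $m\in B$ the trivial bound $2|\L'|$ combined with $|B|\le \a|\L|$ contributes at most $2\a|\L|$. Adding the $\a|\L|/2$ error, we obtain $|\widehat{(Q-\d\L)}(\xi)| < 4\a|\L|$, which is the desired bound.

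The main obstacle is purely book-keeping: one has to be careful that the smoothing error is of the right order (which forces $\k = \a/4$ and hence the precise value of $\eps$ in the hypothesis), and one has to recognize the inner convolution as the Fourier transform of the local balanced function $Q\cap(\L'+m) - \d(\L'+m)$ whose sup-norm is controlled precisely by condition (\ref{c:sm_B}). Note in particular that neither (\ref{c:sm_sq}) nor (\ref{c:sm_total_F}) is needed for this proposition; (\ref{c:sm_B}) together with the Bohr smoothing suffices.
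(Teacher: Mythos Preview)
Your proof is correct and follows essentially the same smoothing strategy the paper has in mind: the paper merely says ``In the same way we can prove'' (pointing back to the averaging method of Lemma~\ref{l:intermediate}), and your argument is precisely that method carried out explicitly, identical in spirit to the decomposition used later in Lemma~\ref{l:l3}. One minor remark: the trivial bound on the local balanced Fourier coefficient is actually $|\L'|$ rather than $2|\L'|$ (since $|Q(x)-\d|\le 1$), which would give $5\a|\L|/2$ instead of $7\a|\L|/2$, but either way the conclusion $<4\a|\L|$ holds.
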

We will not, however, use this fact.

Let $\L_1$, $\L_2$ be Bohr sets, $\L_1 \le \L_2$, and $E_1
\subseteq \L_1$, $E_2 \subseteq \L_2$, $|E_1| = \beta_1 |\L_1|$,
$|E_2| = \beta_2 |\L_2|$. By  $\mathcal{P}$ denote the $E_1 \m
E_2$. Let $A\subseteq \mathcal{P}$, $|A| = \d |E_1| |E_2|$. Denote
by $H$ and $W$ two copies of the set $A$.

\begin{theorem}
Let $f : \mathcal{P} \to {\bf D}$  be a rectilinearly
$(\a,\eps)$--uniform function. Suppose that sets $E_1$, $E_2$ are
$(\a_0,\eps)$--uniform, $\a_0 = 2^{-50} \a^2 \beta_1^{12}
\beta_2^{12}$, $\eps = 2^{-10} \eps_0^2$, $\eps_0 = (2^{-10}
\a_0^2) /(100 d)$. Let $\L_1$ be an $\eps_0$--attendant of $\L_2$.
Then either
\begin{equation}\label{f:f_residual}
  | \sum_{s_1,s_2, r}
  H (s_1,s_2) W ( s_1 + r, s_2 + r ) f(s_1, s_2 + r) |
  \le 16 \a^{1/4} \d^{3/4} \beta_1^2 \beta_2^2 |\L_1|^2 |\L_2|
\end{equation}
or there exists a Bohr set $\L'$, two sets $F_1$, $F_2$ and a
vector $\v{y} = (y_1, y_2) \in G\m G$, $F_1 \subseteq E_1 \cap
(\L' + y_1) $, $F_2 \subseteq E_2 \cap (\L' + y_2) $ such that
$\L'$ is an $\eps_0$--attendant of $\L_1$ and
\begin{equation}\label{est::delta'}
  \quad |F_1| \ge 2^{-2} \beta_1 |\L'|,  \quad
        |F_2| \ge 2^{-2} \beta_2 |\L'| \,\, \mbox{ and }
\end{equation}
\begin{equation}\label{est::card'}
  \quad  \d_{F_1 \m F_2} (A) \ge 2\d \,.
\end{equation}
\label{t:tmp_tha}
\end{theorem}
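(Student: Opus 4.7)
The plan is to reduce the weighted corner sum
\[
  \Sigma := \sum_{s_1,s_2,r} H(s_1,s_2) \, W(s_1+r,s_2+r) \, f(s_1,s_2+r)
\]
to the smoothed rectilinear norm $\|f\|_{\L_1 \m \L_2,\eps}$ via two applications of Cauchy--Schwarz. After the substitution $t = s_2 + r$ one writes
\[
  \Sigma = \sum_{s_1} \sum_t f(s_1,t)\phi(s_1,t), \quad \phi(s_1,t) := \sum_{s_2} H(s_1,s_2)\,W(s_1+t-s_2,t),
\]
exhibiting $\Sigma$ as a pairing of $f$ against a convolution built from two copies of $A$. Lemma \ref{l:smooth_d} together with the $(\a_0,\eps)$--uniformity of $E_1,E_2$ allows the implicit indicators $E_1(s_1)$ and $E_2(t)$ to be replaced by $\L'$-averages at total cost $O(\a_0^{1/2})$; the heavy exponent in $\a_0 = 2^{-50}\a^2\beta_1^{12}\beta_2^{12}$ is calibrated so that this and every subsequent smoothing error sits well below the target $16\a^{1/4}\d^{3/4}\beta_1^2\beta_2^2|\L_1|^2|\L_2|$.

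Apply Cauchy--Schwarz in $s_1 \in E_1$ to obtain
\[
  |\Sigma|^2 \le |E_1| \sum_{s_1} \Bigl| \sum_t f(s_1,t)\phi(s_1,t) \Bigr|^2,
\]
expand the square, and reintroduce $\L'$-smoothing in the $t$-coordinates so that the summations are indexed by pairs $(m,u)$ localised in a common $\L'$-window --- this is precisely the structure appearing inside (\ref{f:f_for_norm}). A second Cauchy--Schwarz separates the $f$-pair from the $\phi$-pair and yields an inequality of the shape
\[
  |\Sigma|^4 \le C \cdot \|f\|^4_{\L_1 \m \L_2,\eps} \cdot \mathcal{E}_A,
\]
where $\mathcal{E}_A$ is a Bohr-smoothed L-shape count of quadruples $\{(r,m),(r,u),(r',m),(r',u)\} \subseteq A$, parametrised by the same Bohr triple $(i,j,k)$ that appears in (\ref{f:f_for_norm}); the ``random'' value of $\mathcal{E}_A$ predicted by the density $\d$ is of order $\d^3$ times a product of the relevant Bohr sizes.

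The dichotomy now emerges. If $\mathcal{E}_A$ is at most a constant multiple of its generic value, substituting the rectilinear $\a$--uniformity hypothesis $\|f\|^4_{\L_1 \m \L_2,\eps} \le \a \beta_1^2 \beta_2^2 |\L'|^4 |\L_1|^2 |\L_2|$ and taking fourth roots yields (\ref{f:f_residual}), the constant $16$ absorbing accumulated smoothing errors. Otherwise $\mathcal{E}_A$ is inflated by a factor of at least $2^4$; pigeonholing over the Bohr parameters $(i,j,k)$ indexing $\mathcal{E}_A$ produces shifts $y_1,y_2 \in G$ such that the smoothed density of $A$ on $(\L'+y_1)\m(\L'+y_2)$ exceeds $2\d\beta_1\beta_2$. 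Lemma \ref{l:intermediate}, applied to the $(\a_0,\eps^2)$--uniform sets $E_1,E_2$, then converts this smoothed inequality into a genuine density increment on $F_1 \m F_2$ with $F_i \subseteq E_i \cap (\L'+y_i)$, $|F_i| \ge 2^{-2}\beta_i|\L'|$ and $\d_{F_1 \m F_2}(A) \ge 2\d$, giving (\ref{est::delta'}) and (\ref{est::card'}). The main obstacle lies in the Cauchy--Schwarz bookkeeping: the $\L'$-kernels must be inserted in exactly the right positions so that the $f$-side factor is literally the norm in (\ref{f:f_for_norm}) rather than a merely comparable fourth moment, and the $A$-side factor $\mathcal{E}_A$ must remain combinatorially transparent so as to admit a pigeonhole extraction of $(y_1,y_2)$; this constraint is what forces the nested scale choice $\eps = 2^{-10}\eps_0^2$, $\eps_0 = (2^{-10}\a_0^2)/(100d)$.
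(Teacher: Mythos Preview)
Your overall architecture --- two Cauchy--Schwarz steps producing a bound of the form $|\Sigma|^4 \le (\text{$A$-side factor}) \cdot \|f\|^4_{\L_1\m\L_2,\eps}$, followed by a dichotomy on the $A$-side factor --- is exactly the paper's strategy. The problem is in how you set up the first Cauchy--Schwarz.

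By packaging \emph{both} copies of $A$ into $\phi(s_1,t)=\sum_{s_2}H(s_1,s_2)W(s_1+t-s_2,t)$ and then applying Cauchy--Schwarz in $s_1$, you obtain after expansion a sum $\sum_{s_1,t,t'} f(s_1,t)\overline{f(s_1,t')}\,\phi(s_1,t)\overline{\phi(s_1,t')}$. No single further Cauchy--Schwarz turns the $f$-side of this into the rectilinear norm $\sum_{r,r',m,u} f(r,m)\overline{f(r',m)}\overline{f(r,u)}f(r',u)$: a Cauchy--Schwarz over $(s_1,t,t')$ yields $\sum_{s_1}\bigl(\sum_t |f(s_1,t)|^2\bigr)^2$, which is the wrong fourth moment, and the alternatives either fail to decouple or raise the $f$-degree to eight. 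The rectilinear norm needs the first coordinate doubled as well as the second, and your first step has already frozen $s_1$.

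The paper avoids this by peeling off one copy of $A$ at a time. The first Cauchy--Schwarz is against $h=H$ (localised to $\mu_j$), leaving $g=W$ (localised to $\l_i$) still coupled with $f$ in the inner sum $\sum_r g(k+r,m+r)f(k,m+r)$; expanding this square produces a $gg\cdot ff$ structure. After a change of variables, the second Cauchy--Schwarz separates $g(k,m)g(k+u,m+u)$ from a term of the form $\bigl|\sum_r E_2^{(1)}(\cdot)f(k-r,m)f(k-r,m+u)\bigr|^2$, and expanding \emph{this} square doubles the first coordinate, giving exactly the four-point $f$-pattern that (after replacing $E_i$ by $\L'$ via uniformity) is $\|f\|^4_{\L_1\m\L_2,\eps}$.

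Correspondingly, the $A$-side factor that emerges is not a quadruple count in $A$ but the product $\|h_j\|_2^4\cdot\sigma^*_{ij}$, where $\|h_j\|_2^2=|A\cap(\L_1\m\mu_j)|$ is a strip count and $\sigma^*_{ij}=\sum_{k,m,u}g_i(k,m)g_i(k+u,m+u)$ is a diagonal-pair count; the generic value of the product is of order $\d^3\beta_1^4\beta_2^4|\L'|^4|\L_1|^3$, which is where the $\d^{3/4}$ in the conclusion comes from. The density increment then has \emph{two} possible sources, handled separately: either some $\|h_j\|_2^2$ is abnormally large (giving a dense horizontal strip), or some $\sigma^*_{ij}$ is abnormally large, which via a short Fourier/convolution argument (the paper's Lemma~\ref{l:w_est}) is converted to a dense $\L'$-box. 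Your sketch collapses these into a single pigeonhole on $\mathcal{E}_A$, which is too coarse to see where the product structure $F_1\times F_2$ comes from.
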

\begin{proof*}
Let $\L'$ be an $\eps_0$--attendant of $\L_1$ to be chosen later.

Let
\[
   \Omega^{(1)}_1 = \{ s \in \L_1  ~|~ \| (E_1 \cap (\L'+s) - \d (\L'+s) ) \F{} ~ \|_{\infty} \ge \a_0  \} \,, \quad
\]
\[
   \Omega^{(1)}_2 = \{ s \in \L_1  ~|~ | \d_{\L'+s}( E_1) - \beta_1 | \ge \a_0^{2/3} \} \,,
\]
and
\[
  \Omega^{(2)}_1 = \{ s \in \L_2  ~|~ \| (E_2 \cap (\L'+s) - \d (\L'+s) ) \F{} ~ \|_{\infty} \ge \a_0  \} \,, \quad
\]
\[
  \Omega^{(2)}_2 = \{ s \in \L_2  ~|~ | \d_{\L'+s}( E_2) - \beta_2 | \ge \a_0^{2/3} \} \,,
\]
Let also $\Omega_1 = \Omega^{(1)}_1 \cup \Omega^{(1)}_2$, and
$\Omega_2 = \Omega^{(2)}_1 \cup \Omega^{(2)}_2$. By assumption
the sets $E_1$, $E_2$ are $(\a_0, \eps)$--uniform. Let $\L'$ be
$\eps_0$--attendant of $\L_1$ such that (\ref{c:sm_B}) ---
(\ref{c:sm_total_F}) hold. Using definitions and Lemma
\ref{l:intermediate}, we get
$|\Omega^{(1)}_l| \le \a_0^{2/3} |\L_1| $, $|\Omega^{(2)}_l| \le
\a_0^{2/3} |\L_2|$, $l=1,2$. Hence $|\Omega_1| \le 2 \a_0^{2/3}
|\L_1|$ and $|\Omega_2| \le 2 \a_0^{2/3} |\L_2|$.

Let $g_i(\v{s}) = g_i (k,m) = W (k,m) \L' (k-i) $, $i\in \L_1$,
and $h_j(\v{s}) = h_j (k,m) = H (k,m) \L' (m-j)$, $j\in \L_2$.
We have $k \in \L_1$, $m \in \L_2$ and $k+r \in \L_1$ in (\ref{f:f_residual}).                              
It follows that the sum (\ref{f:f_residual}) does not exceed
$|\L_1|^2 |\L_2|$. Let also $\l_i = \L' + i$, and $\mu_j = \L' +
j$. Using Lemma \ref{l:L_pm}, we get
\[
  \sigma_0 =
                \sum_{s_1,s_2, r } H (s_1,s_2) W ( s_1 + r, s_2 + r ) f(s_1, s_2 + r)
           =
\]
\[
           =
             \sum_{k,m} \sum_{r}
             H (k,m) W ( k + r, m+r ) f(k,m+r) \L_1(k+r) \L_2(m)
           =
\]
\[
             \frac{1}{|\L'|^2}
             \sum_{k,m} \sum_{r}
             H (k,m) W ( k + r, m+r ) f(k,m+r) (\L_1 * \L') (k+r) (\L_2 * \L') (m) + 16 \vartheta_0 \k |\L_1|^2 |\L_2|
           =
\]
\begin{equation}\label{f:begG}
             \frac{1}{|\L'|^2}
             \sum_{i\in \L_1} \sum_{j\in \L_2}
             \sum_{k,m} \sum_r
             h_j (k,m) g_i ( k + r, m+r ) f(k,m+r)
             + 16 \vartheta_0 \k |\L_1|^2 |\L_2| \,,
\end{equation}
where $ |\vartheta_0| \le 1$ and $\k \le 2^{-10} \a_0^2$. Split
the sum $\sigma_0$ as
\begin{equation}\label{f:4+R}
  \sigma_0 = \widetilde{\sigma}_0 + \sigma_0^{'} + \sigma_0^{''} + \sigma_0^{'''} + R\,,
\end{equation}
The sum $\widetilde{\sigma}_0$ is taken over $i \notin \Omega_1,
j\notin \Omega_2$, the sum $\sigma_{0}^{'}$ is taken over $i \in
\Omega_1, j\notin \Omega_2$, the sum $\sigma_{0}^{''}$ is taken
over $i \notin \Omega_1, j\in \Omega_2$, the sum
$\sigma_{0}^{'''}$ is taken over $i \in \Omega_1, j\in \Omega_2$
and $|R| \le 16 \eps |\L_1|^2 |\L_2|$. Let us estimate
$\sigma_0^{'}$, $\sigma_0^{''}$ and  $\sigma_0^{'''}$. Rewrite
$\sigma_0$ as
\begin{equation}\label{f:f_residual1}
  \sigma_0 = \frac{1}{|\L'|^2} \sum_{i \in \L_1} \sum_{j\in \L_2} \sum_{k,m} \sum_r h_j(k-r,m) g_i(k,m+r) f(k-r,m+r) + R\,.
\end{equation}
Let $i$ and $j$ in the sum (\ref{f:f_residual1}) be fixed. We have
$k\in \l_i$ and $m\in \mu_j$. Further if  $f(k-r,m+r)$ is not
zero, then $k-r \in \L_1$. It follows that $r\in \l_i - \L_1 = \L'
- \L_1 + i$. The set $\L'$ is $\eps_0$--attendant of $\L_1$. Using
Lemma \ref{l:L_pm}, we obtain that $r$ belongs to a set of
cardinality at most  $2 |\L_1|$. Hence
\begin{equation}\label{e:sigma'}
  |\sigma_0^{'}|  \le \frac{1}{|\L'|^2} 2 |\Omega_1| \cdot |\L_2| \cdot |\L'|^2 |\L_1|
                  \le 2 \a_0^{2/3} |\L_1|^2 |\L_2| \,.
\end{equation}
In the same way $|\sigma_0^{''}| \le 2 \a_0^{2/3} |\L_1|^2 |\L_2|
$ and $|\sigma_0^{'''}| \le 2 \a_0^{2/3} |\L_1|^2 |\L_2| $.

Take $i$ and $j$ such that   $i\notin \Omega_1$, $j\notin
\Omega_2$. Let  $g (\v{s}) = g_i (\v{s})$, $h (\v{s}) = h_j
(\v{s})$, and $\L_1 \m \mu_j = \L_1^{(1)} \m \L_2^{(1)}$, $\l_i \m
\L_2 = \L_1^{(2)} \m \L_2^{(2)}$. Let
$E_2^{(1)} = E_2 \cap \L_2^{(1)}$, $E_1^{(2)} = E_1 \cap
\L_1^{(2)}$, $\beta_2^{(1)} = |E_2^{(1)}| / |\L_2^{(1)}|$, and $
\beta_1^{(2)} = |E_1^{(2)}| / |\L_1^{(2)}|$. We have
\begin{equation}\label{}
 \sigma = \sigma_{i,j}
        = \sum_{s_1,s_2, r }  h (s_1,s_2) g ( s_1 + r, s_2 + r ) f(s_1, s_2 + r) =
 \end{equation}
\begin{equation}\label{tmp:16:10_11}
        = \sum_{k,m} h(k,m) E_2^{(1)} (m) \sum_{r} g(k+r,m+r) f(k,m+r)
\end{equation}
Note that $k$ in (\ref{tmp:16:10_11}) belongs to $\L^{(2)}_1$.
Using  the Cauchy--Schwartz inequality, we obtain
\begin{equation}\label{}
 |\sigma|^2 \le \| h \|_2^2 \sum_{k,m} E_2^{(1)} (m) |\sum_r g(k+r,m+r) f(k,m+r) |^2 =
\end{equation}
\[
 = \| h \|_2^2 \sum_{k,m} E_2^{(1)} (m) \sum_{r,p} g(k+r,m+r) f(k,m+r) g(k+p,m+p) f(k,m+p) =
\]
\[
 = \| h \|_2^2 \sum_{k,m,u} g(k,m) g(k+u,m+u) \sum_r E_2^{(1)} (m-r) f(k-r,m) f(k-r,m+u) =
\]
\[
 = \| h \|_2^2 \sum_{k,m,u} g(k,m) g(k+u,m+u) E_1^{(2)} (k) E_1^{(2)} (k+u)
\]
\[
    \cdot \sum_r E_2^{(1)} (m-r) f(k-r,m) f(k-r,m+u) \,.
\]
We have $k\in \L_1^{(2)}$ and $k-r \in \L_1$. It follows that
$r\in k-\L_1 \in \L_1^{(2)} - \L_1$. Since  $m-r \in \L_2^{(1)}$
it follows that $m\in \L_2^{(1)} + r \in \L_2^{(1)} + \L_1^{(2)} -
\L_1$. On the other hand $k+u \in \L_1^{(2)}$. Hence $u\in
\L_1^{(2)} - \L_1^{(2)}$ and $m+u \in \L_2^{(1)} + \L_1^{(2)} -
\L_1 + \L_1^{(2)} - \L_1^{(2)}$. Let $\tilde{\L}_i = \L' + \L' +
\L' + \L' + \L_1 + i$. Then $m, m+u \in \tilde{\L}_i + j = Q_{ij}
= Q$. Using Lemma \ref{l:L_pm} for the Bohr set $\L_1$ and its
$\eps_0$--attendant  $\L'$, we obtain that the cardinality of
$\tilde{\L}_i$ does not exceed $5|\L_1|$. Using the
Cauchy--Schwartz inequality, we get
\begin{equation}\label{}
  |\sigma|^4 \le \| h \|_2^4 \left( \sum_k \sum_{m,u}  g(k,m) g(k+u,m+u) \right)
\end{equation}
\[
                           \cdot
                           \Big( \sum_{k,m,u} E_2^{(1)} (k) E_1^{(2)} (k+u) \sum_{r,r'} E_1^{(2)} (m-r) E_1^{(2)} (m-r')
                                \m
\]
\[
                                  f(k-r,m) f(k-r,m+u) f(k-r',m) f(k-r',m+u) \Big) \,.
\]
Let $\sigma^* = \sigma_{ij}^* = \sum_k \sum_{m,u}  g(k,m)
g(k+u,m+u)$.
Let
\[
  \Omega^{'} = \{ s \in \L_2  ~|~ | \d_{\L_1+s}( E_2) - \beta_2 | \ge 4\a_0^{1/2} \mbox{ or }
\]
\[
                                    \frac{1}{|\L_1|} \sum_{n\in \L_1 +s} |\d_{\L'+n} (E_2) - \beta_2 |^2  \ge 4 \a_0^{1/2}  \},
  \mbox{ and }
  G^{'} = \L_2 \setminus \Omega^{'} \,.
\]
By assumption $\L_1$ is an $\eps_0$--attendant of $\L_2$ and $E_2$
is an $(\a_0,\eps)$--uniform subset of $\L_2$. Using Lemma
\ref{l:intermediate}, we get $|\Omega^{'}| \le 8 \a_0^{1/2}
|\L_2|$. Let $\tilde{\L} = \L' + \L' + \L' + \L' + \L_1$. Since
$\L'$ is an $\eps_0$--attendant of $\L_1$, it follows that for any
$s\in G^{'}$ we have $ | \d_{ \tilde{\L} + s } ( E_2) - \beta_2 |
< 8 \a_0^{1/2}$ and $ \sum_{n\in \tilde{\L} +s} |\d_{\L'+n} (E_2)
- \beta_2 |^2  < 8 \a_0^{1/2} |\t{\L}|$. For an arbitrary  $i\in
\L_1$ consider the set
\[
 \Omega^{*} = \Omega^{*}_i = \{~ j \in \L_2 ~|~ | \d_{ \tilde{\L}_i + j } ( E_2) - \beta_2 | \ge 8 \a_0^{1/2}
                                                    \mbox{ or }
\]
\begin{equation}\label{f:tilde}
                                    \frac{1}{|\t{\L}_i|} \sum_{n\in \t{\L}_i +j} |\d_{\L'+n} (E_2) - \beta_2 |^2  \ge 8\a_0^{1/2} ~\} \,.
\end{equation}
Since  $ (\L_2 \setminus \Omega^{*}_i ) \supseteq ( \L_2 \cap
(G^{'} - i) )$ it follows that $\Omega^{*}_i \subseteq ( \L_2
\setminus (G^{'} - i) )$. Since  $\L_1$ is an $\eps_0$--attendant
of $\L_2$, it follows that $| \L_2 \setminus (G^{'} - i) | = |
(\L_2 + i) \setminus G^{'} | \ge |\L_2^{-} \cap G^{'}| \ge (1 - 8
\a_0^{1/2} - 8\k_0) |\L_2|$, $\k_0 \le \a_0^2$. Hence
$|\Omega^{*}_i| \le 8 \a_0^{1/2} |\L_2| + 8 \k_0 |\L_2| \le 16
\a_0^{1/2} |\L_2|$.
This yields
\begin{equation}\label{est:*}
  \frac{1}{|\L'|^2}
    \sum_{i\notin \Omega_1, j \in \Omega^{*}_i} |\sigma_{ij}|
        \le
            \frac{1}{|\L'|^2}  \sum_{i\notin \Omega_1} ( 16 \a_0^{1/2} |\L_2|  2 |\L'|^2 |\L_1| )
        \le
            32 \a_0^{1/2} |\L_1|^2 |\L_2| \,.
\end{equation}
We have $j \notin \Omega_2$. Suppose in addition that  $j\notin
\Omega^{*}_i$. Let $\Omega'_2 = \Omega'_2 (i) = \Omega_2 \cup
\Omega^{*}_i$.

\begin{lemma}
    For any $i\notin \Omega_1$ and any $j\notin \Omega^{*}_i$ the following holds
    we have
    either
    \begin{equation}\label{f:sigma_star}
             |\sigma_{ij}^*| \le 16 \d \beta^2_1 \beta^2_2 |\L'|^{2} |\L_1|^2 |\L_2| \,.
    \end{equation}
    or
    there exist two sets $F_1$, $F_2$
    and a vector $\v{y} = (y_1, y_2) \in G\m G$,
    $F_1 \subseteq E_1 \cap (\t{\L} + y_1) $, $F_2 \subseteq E_2 \cap (\t{\L} + y_2) $ such that
    (\ref{est::delta'}) and (\ref{est::card'}) hold.
\label{l:w_est}
\end{lemma}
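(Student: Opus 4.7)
The plan is to interpret $\sigma^*_{ij}$ combinatorially and then perform a dichotomy: either it satisfies the bound (\ref{f:sigma_star}), or we extract a density increment on a product set. Substituting $g = g_i$ with $g_i(k,m) = A(k,m)\L'(k-i)$ gives
\[
\sigma^*_{ij} = \sum_{u}\sum_{k,m} A(k,m)\,A(k+u,m+u)\,\L'(k-i)\,\L'(k+u-i),
\]
which (notice it is in fact independent of $j$) counts ordered pairs of points of $A$ with first coordinate in $\l_i = \L' + i$ lying on a common anti-diagonal $\{y - x = \mbox{const}\}$. In the random model (with $A$ uniform of density $\delta$ in $E_1 \times E_2$) this count has order $\delta^2\beta_1^2\beta_2^2|\L'|^2|\L_2|$, and so the asserted upper bound leaves a factor of order $1/\delta$ of slack.

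Next I split $\sigma^*_{ij} = \sum_u \tau_u$, where $\tau_u$ denotes the inner sum for fixed $u$. The support of $u$ is contained in $\l_i - \l_i$, which by Lemma \ref{l:L_pm} applied to $\L'$ and its attendant has cardinality at most $(1+2\k)|\L'|$. If (\ref{f:sigma_star}) fails then by pigeonhole there must be a single "popular shift" $t$ for which $\tau_t$ substantially exceeds the random expectation. The quantity $\tau_t$ counts pairs $(k,m)\in A$ with $(k+t,m+t)\in A$ and both $k$ and $k+t$ in $\l_i$.

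I would then convert this diagonal correlation into a factor-two density increment on a product set. Taking $\v{y} = (i,j)$, define $F_1 \subseteq E_1 \cap (\tilde\L + y_1)$ to be the set of "heavy columns" of $A$ restricted to $\tilde\L + i$ (those $k$ whose column $\{m : A(k,m) = 1\}$ has size comparable to $2\delta \beta_2 |\L_2|$), and define $F_2 \subseteq E_2 \cap (\tilde\L + y_2)$ as the heavy rows symmetrically. The hypotheses $i \notin \Omega_1$ and $j \notin \Omega^*_i$, combined with Lemma \ref{l:intermediate}, ensure that $E_1$ and $E_2$ have nearly their expected densities on the enlarged Bohr translates $\tilde\L + i$ and $\tilde\L + j$ respectively (recall $\tilde\L = \L' + \L' + \L' + \L' + \L_1$ was arranged precisely so that every shift occurring in the preceding Cauchy--Schwarz steps stays inside $\tilde\L$). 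A large $\tau_t$ then forces a positive fraction of the $A$-mass into $F_1 \times F_2$; a careful choice of the "heavy" threshold yields both (\ref{est::delta'}) on the sizes of $F_1, F_2$ and the density increment (\ref{est::card'}).

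The principal obstacle is the final step: extracting a \emph{multiplicative} density increment $\delta \to 2\delta$ rather than the additive $\delta \to \delta + c\delta^2$ that a plain Cauchy--Schwarz produces. This requires using the full $1/\delta$ slack in the hypothetical failure of the $\sigma^*_{ij}$-bound, and likely a double pigeonhole over both heavy columns and heavy rows, to escape the usual square-law bottleneck.
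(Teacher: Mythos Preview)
Your approach diverges from the paper's at the crucial point and, as written, has a genuine gap.

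The paper does \emph{not} pigeonhole on the shift $u$. Instead it majorises the second factor $g(k+u,m+u)$ by $E_1^{(2)}(k+u)\,\tilde E_2^{(2)}(m+u)$ and sums over $u$ to obtain a convolution $(E_1^{(2)} * \overline{\tilde E}_2^{(2)})(k-m)$. The hypothesis $i\notin\Omega_1$ means precisely that $E_1^{(2)}$ is $\alpha_0$--uniform, so Corollary~\ref{c:sv} replaces this convolution by $\beta_1^{(2)}(\lambda_i * \overline{\tilde E}_2^{(2)})$; the hypothesis $j\notin\Omega^*_i$ then pins down the density of $E_2$ on $\tilde\Lambda_i+j$, yielding $\sigma^*_{ij}\lesssim \beta_1\beta_2|\Lambda'|\sum_{k,m}g(k,m)$. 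Thus a large $\sigma^*_{ij}$ forces $\sum_{k,m}A(k,m)\Lambda'(k-i)$ to be large, which is already a density statement about $A$ on a strip, and a routine averaging gives the product-set increment (\ref{est::delta'}), (\ref{est::card'}).

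Your route breaks down at the conversion step. A single large $\tau_t$ asserts that $A$ correlates with its own diagonal translate $A-(t,t)$ on the strip; this is \emph{diagonal} structure and does not by itself produce heavy rows or heavy columns. (Think of $A$ consisting of many short diagonal segments: $\tau_t$ can be large for the right $t$ while every row and column of $A$ is light.) Your sketch never invokes the $\alpha_0$--uniformity of $E_1^{(2)}$, which is the actual content of $i\notin\Omega_1$, and without it there is no mechanism to decouple the two coordinates. A secondary issue: your size bound $|\lambda_i-\lambda_i|\le(1+2\kappa)|\Lambda'|$ is not what Lemma~\ref{l:L_pm} gives---that lemma controls $|\Lambda+\Lambda'|$ when $\Lambda'$ is an attendant of $\Lambda$, not $|\Lambda'+\Lambda'|$---though this alone would not be fatal.
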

\begin{note*}
    Let $T$ be a subset of $G$, $|T| = \d |G|$,
    $E_1 = E_2 = G$, $\beta_1 = \beta_2 = 1$
    and let $g$ be the characteristic function
    of the set $\mathcal{A} = \bigsqcup_{x\in G} (\{ x \} \m \{ T+x \})$.
    Then it is easy to see that inequality (\ref{f:sigma_star})
    is best possible in the case (up to constants).
    On the other hand (\ref{est::delta'}), (\ref{est::card'}) does not hold
    with $A$ equals $\mathcal{A}$.
\end{note*}
\begin{proof}
Let $\t{E}_2^{(2)} = E_2 \cap Q$ and $\o{E}_2^{(2)} (x) =
\t{E}_2^{(2)} (-x)$. We have
\[
  \sigma^{*}_{ij}
                =
                    \sum_{k,m,u} g(k,m) g(k+u,m+u)
                        \le \sum_{k,m,u} g(k,m) E_1^{(2)} (k+u) \t{E}_2^{(2)} (m+u)
\]
\begin{equation}\label{2tmp_1}
               = \sum_{k,m}  g(k,m) ( E_1^{(2)} * \o{E}_2^{(2)} ) (k-m)
               = \sum_{k',m}  g(k'+m,m) ( E_1^{(2)} * \o{E}_2^{(2)} ) (k') \,.
\end{equation}
If $k'$ is fixed then the variable $m$ in (\ref{2tmp_1}) belongs
to the set of the cardinality $|\L'|$. Recall that  $|Q_{ij}| \le
5|\L_1|$. Lemma \ref{l:L_pm} implies that $k'$ in the sum
(\ref{2tmp_1}) belongs to a set of cardinality at most $8|\L_1|$.
Since $i\notin \Omega_1$, it follows that the set $E_1^{(2)}$ is
$\a_0$--uniform. Using Corollary \ref{c:sv}, we get
\[
    \sigma^{*}_{ij} \le \beta_1^{(2)} \sum_{k',m}  g(k'+m,m) ( \l_i * \o{E}_2^{(2)} ) (k') + 16 \a^{2/3}_0 |\L'|^2 |\L_1| \,.
\]
We have $j\notin \Omega^*_i$. Hence
\[
    \sigma^{*}_{ij} \le \beta_1^{(2)} \sum_{k',m}  g(k'+m,m) ( \l_i * E_2 ) (k') + 16 \a^{2/3}_0 |\L'|^2 |\L_1|
\]
\begin{equation}\label{2tmp_2}
        \le
            \beta_1^{(2)} \beta_2 |\L'| \sum_{k,m} g(k,m) + 32 \a^{1/6}_0 |\L'|^2 |\L_1| \,.
\end{equation}
Suppose that $\sigma^{*}_{ij} > 16 \d \beta^2_1 \beta^2_2 |\L'|^2
|\L_1|$. Since  $i \notin \Omega_1$, it follows that $\beta_1 /2
\le \beta_1^{(2)}  \le 2 \beta_1 $. Using this and (\ref{2tmp_2}),
we get
\begin{equation}\label{2tmp_3}
    \sum_{k,m} g(k,m) \ge 8 \beta^2_1 \beta^2_2 |\L'| |\L_1| \,.
\end{equation}
Recall that $m$ belongs to the set $\t{\L}_i +j$ in
(\ref{2tmp_3}). By Lemma \ref{l:L_pm}, we find
\begin{equation}\label{2tmp_4}
    \sum_{k,m} A(k,m) \L'(k-i) \L_1(m-i-j) \ge 4 \beta^2_1 \beta^2_2 |\L'| |\L_1| \,.
\end{equation}
We have $i\notin \Omega_1$ and $j \notin \Omega^*_i$. Using this
fact, inequality (\ref{2tmp_4}) and simple average arguments it is
easy to see that there is a vector $\v{y} = (y_1, y_2) \in G\m G$
and two sets $F_1 \subseteq E_1 \cap (\L' + y_1) $, $F_2 \subseteq
E_2 \cap (\L' + y_2)$ such that (\ref{est::delta'}),
(\ref{est::card'}) hold. This completes the proof of the lemma.
\end{proof}

We have
\begin{equation}\label{}
  |\sigma|^4 \le \| h \|_2^4 \cdot \sigma^{*} \cdot \sum_{m,u} \sum_{r,r'} f(r,m) f(r,u) f(r',m) f(r',u) \,\, \cdot
\end{equation}
\begin{equation}\label{}
                                      \sum_k E_1^{(2)} (k) E_1^{(2)} (k-m+u)
                                             E_2^{(1)} (m-k+r) E_2^{(1)} (m-k+r') =
\end{equation}
\begin{equation}\label{}
           =   \| h \|_2^4 \cdot \sigma^{*} \cdot \sum_{m,u} \sum_{r,r'} f(r,m) f(r,u) f(r',m) f(r',u) \,\, \cdot
\end{equation}
\begin{equation}\label{e:final_E}
                                      \sum_k E_1^{(2)} (m-k) E_1^{(2)} (u-k)
                                             E_2^{(1)} (k+r) E_2^{(1)} (k+r')
           = \| h \|_2^4 \cdot \sigma^{*} \cdot \sigma' \,.
\end{equation}
Rewrite  $\sigma'$ as
\begin{equation}\label{e:final_E1}
  \sigma' = \sum_k \sum_{r,r'} E_2^{(1)} (k+r) E_2^{(1)} (k+r')
                                                  | \sum_m E_1^{(2)} (m-k) f(r,m) f(r',m) |^2
\end{equation}
We have  $r\in \L_1$ and $k+r \in \L_2^{(1)}$. It follows that
$k\in \L_2^{(1)} - \L_1$. On the other hand $m-k \in \L_1^{(2)}$.
Hence $m\in \L_1^{(2)} + k \in \L_2^{(1)} + \L_1^{(2)} - \L_1$. By
symmetry  $u$ belongs to $\L_2^{(1)} + \L_1^{(2)} - \L_1$. Using
Lemma \ref{l:L_pm} for $\L_1$ and its $\eps_0$--attendant $\L'$,
we obtain that $k$ and $m,u$ belongs to some translations of {\it
Bohr } sets $W_1 = \L_1^{+}$ and $W_2 = W_1^{+}$, respectively,
and the cardinalities of these sets do not exceed $3|\L_1|$.

If $k$ is fixed, then $m,u,r,r'$ in (\ref{e:final_E}) run some
sets of the cardinalities at most $|\L'|$.

Let $\Phi^1_{r,r'} (m) = f(r,-m) f(r',-m) W_2(m-i-j)$,\\
$\Phi^2_{r,r'} (u) = f(r,-u) f(r',-u) W_2 (u-i-j)$,
$\Phi^3_{m,u} (r) = f(-r,m) f(-r,u)$, and \\
$\Phi^4_{m,u} (r') = f(r',m) f(r',u)$. Consider the sets
\[
 B_1 = \{ k ~|~ | (\Phi^1_{r,r'} * E_1^{(2)}) (-k) - \beta_1^{(2)} (\Phi^1_{r,r'} * \L_1^{(2)}) (-k) | \ge \a_0^{2/3} |\L'| \} \,
\]
\[
 B_2 = \{ k ~|~ | (\Phi^2_{r,r'} * E_1^{(2)}) (-k) - \beta_1^{(2)} (\Phi^2_{r,r'} * \L_1^{(2)}) (-k) | \ge \a_0^{2/3} |\L'| \} \,
\]
\[
 B_3 = \{ k \in \L_1 ~|~ | (\Phi^3_{m,u} * E_2^{(1)}) (k) - \beta_2^{(1)} (\Phi^3_{m,u} * \L_2^{(1)}) (k) | \ge \a_0^{2/3} |\L'| \} \,
\]
\[
 B_4 = \{ k \in \L_1 ~|~ | (\Phi^4_{m,u} * E_2^{(1)}) (k) - \beta_2^{(1)} (\Phi^4_{m,u} * \L_2^{(1)}) (k) | \ge \a_0^{2/3} |\L'| \} \,.
\]
We have $i \notin \Omega_1$, $j \notin \Omega_2$. Using Corollary
\ref{c:sv}, we get
$|B_1|, |B_2| \le 3 \a_0^{2/3} |\L_1|$ and $|B_3|, |B_4| \le
\a_0^{2/3} |\L_1|$. Let $B = B_1 \bigcup B_2 \bigcup B_3 \bigcup
B_4$. Then $|B| \le 8 \a_0^{2/3} |\L_1|$. Split $\sigma'$ as
\[
  \sigma' = \sum_{k\in B} \sum_{r,r'} E_2^{(1)} (k+r) E_2^{(1)} (k+r') | \sum_m E_1^{(2)} (m-k) f(r,m) f(r',m) |^2 +
\]
\[
           + \sum_{k \notin B} \sum_{r,r'} E_2^{(1)} (k+r) E_2^{(1)} (k+r') | \sum_m E_1^{(2)} (m-k) f(r,m) f(r',m) |^2
           = \sigma_1 + \sigma_2
\]
Let us estimate $\sigma_1$. Since  $|B| \le 8 \a_0^{2/3} |\L_1|$,
it follows that
\begin{equation}\label{}
  |\sigma_1| \le 8 \a_0^{2/3} |\L'|^4 |\L_1| \,.
\end{equation}
If $k\notin B$, then $k\notin B_1$. This implies that
\[
  \sigma_2 = \sum_{k\notin B} \sum_{u} \sum_{r,r'} f(r,u) f(r',u) E_1^{(2)} (u-k) E_2^{(1)} (k+r) E_2^{(1)} (k+r') \,\, \cdot
\]
\[
             \sum_m f(r,m) f(r',m) E_1^{(2)} (m-k) =
\]
\[
           = \sum_{k\notin B} \sum_{u} \sum_{r,r'} f(r,u) f(r',u) E_1^{(2)} (u-k) E_2^{(1)} (k+r) E_2^{(1)} (k+r')
            (\Phi^1_{r,r'} * E_1^{(2)}) (-k)
\]
\[
           = \beta_1^{(2)} \sum_{k\notin B} \sum_{u} \sum_{r,r'} f(r,u) f(r',u) E_1^{(2)} (u-k) E_2^{(1)} (k+r) E_2^{(1)} (k+r')
             \,\, \cdot
\]
\[
                                                \sum_m f(r,m) f(r',m) \L_1^{(2)} (m-k) +
\]
\[
           + \vartheta \a_0^{2/3} |\L'| \sum_{k\notin B} \sum_{u} \sum_{r,r'} f(r,u) f(r',u) E_1^{(2)} (u-k) E_2^{(1)} (k+r) E_2^{(1)} (k+r')
\]
\[
           = \beta_1^{(2)} \sum_{k\notin B} \sum_{u} \sum_{r,r'} f(r,u) f(r',u) E_1^{(2)} (u-k) E_2^{(1)} (k+r) E_2^{(1)} (k+r') \,\, \cdot
\]
\begin{equation}\label{}
                                                \sum_m f(r,m) f(r',m) \L_1^{(2)} (m-k)
           + 4 \vartheta \a_0^{2/3} |\L'|^4 |\L_1| \,,
\end{equation}
where $|\vartheta| \le 1$. Using these arguments for $B_2$, $B_3$
and $B_4$, we get
\[
  |\sigma_2| \le (\beta_1^{(2)})^2 (\beta_2^{(1)})^2
                                      \sum_{m,u} \sum_{r,r'} f(r,m) f(r,u) f(r',m) f(r',u) \,\, \cdot
\]
\begin{equation}\label{}
                                      \sum_k \L_1^{(2)} (m-k) \L_1^{(2)} (u-k)
                                             \L_2^{(1)} (k+r) \L_2^{(1)} (k+r')
                                      +
                                      16 \a_0^{2/3} |\L'|^4 |\L_1| \,,
\end{equation}
It follows that
\[
  |\sigma'| \le |\sigma_1| + |\sigma_2|
                                      \le (\beta_1^{(2)})^2 (\beta_2^{(1)})^2
                                      \sum_{m,u} \sum_{r,r'} f(r,m) f(r,u) f(r',m) f(r',u) \,\, \cdot
\]
\begin{equation}\label{}
                                      \sum_k \L_1^{(2)} (m-k) \L_1^{(2)} (u-k)
                                             \L_2^{(1)} (k+r) \L_2^{(1)} (k+r')
                                      +
                                      32 \a_0^{2/3} |\L'|^4 |\L_1| \,.
\end{equation}
Using (\ref{e:final_E}), we obtain
\[
  |\sigma|^4 \le \| h \|_2^4 \cdot \sigma^{*} \cdot (\beta_1^{(2)})^2 (\beta_2^{(1)})^2
                                                  \sum_k \sum_{r,r'} \L_2^{(1)} (k+r) \L_2^{(1)} (k+r') \,\, \cdot
\]
\begin{equation}\label{e:final_E_f}
                                                  \Big| \sum_m \L_1^{(2)} (m-k) f(r,m) f(r',m) \Big|^2
                                                  +
                                                  32 \| h \|_2^4 \cdot \sigma^{*} \cdot \a_0^{2/3} |\L'|^4 |\L_1|
\end{equation}
%
%
%
%
%
%
%
Since  $i \notin \Omega_1$, $j \notin \Omega_2$, it follows that
$\beta_1^{(2)} \le 2\beta_1$ and $\beta_2^{(1)} \le 2 \beta_2$.
Whence
\[
  |\sigma_{ij}|^4 \le 2^{4} \beta_1^2 \beta_2^2 \cdot \| h \|_2^4 \cdot \sigma_{ij}^{*} \cdot
  \sum_k \sum_{r,r'} \L_2^{(1)} (k+r) \L_2^{(1)} (k+r') \, \cdot \,
\]
\begin{equation}\label{}
  \Big| \sum_m \L_1^{(2)} (m-k) f(r,m) f(r',m) \Big|^2 + 2^{5} \a_0^{2/3} \cdot \| h \|_2^4 \cdot \sigma_{ij}^{*} \cdot |\L'|^4 |\L_1| \,.
\end{equation}
Let $\a_{ij} = \sum_k \sum_{r,r'} \L_2^{(1)} (k+r) \L_2^{(1)}
(k+r') | \sum_m \L_1^{(2)} (m-k) f(r,m) f(r',m) |^2$.

Suppose that there are $i\notin \Omega_1, j \notin \Omega'_2 (i)$
such that $\| h_j \|_2^2 \ge 8 \d \beta_1 \beta_2 |\L'| |\L_1|$.
It follows that
\begin{equation}\label{tmp:f_on_h}
    \sum_{k,m} A(k,m) \L'(m-j)  \ge 8 \d \beta_1 \beta_2 |\L'| |\L_1| \,.
\end{equation}
Let $F'_1 = E_1$, $F'_2 = E^{(1)}_2$. We have $j \notin \Omega'_2
(i)$. Using this and (\ref{tmp:f_on_h}), we get
\[
    |A \cap F'_1 \m F'_2| \ge 4 \d \beta_1 \beta_2 |F'_1| |F'_2|
\]
and
\[
    |F'_1| = \beta_1 |\L_1| \,, \quad |F'_2| \ge 2^{-1} \beta_2 |\L'| \,.
\]
Using simple average arguments it is easy to see that there are a
vector $\v{y} = (y_1, y_2) \in G\m G$ and two sets $F_1 \subseteq
E_1 \cap (\L' + y_1) $, $F_2 \subseteq E_2 \cap (\L' + y_2)$ such
that (\ref{est::delta'}), (\ref{est::card'}) hold.

Using Lemma \ref{l:w_est}, we obtain
\[
  \sum_{i\notin \Omega_1, j \notin \Omega'_2 (i)} |\sigma_{ij}|
        \le
            8 \d^{3/4} \beta^{3/2}_1 \beta^{3/2}_2 |\L'| |\L_1|^{3/4} \cdot
            \left( \sum_{i\in \L_1, j\in \L_2} \a_{ij} \right)^{1/4} (|\L_1| |\L_2|)^{3/4} \,+
\]
\[
                        +
                            4 \a_0^{1/6} |\L'|^2 |\L_1|^2 |\L_2| \,.
\]
By assumption the function $f$ is rectilinearly
$(\a,\eps)$--uniform. Clearly,
\[
    \sum_{i\in \L_1, j\in \L_2} \a_{ij}
        =
            \sum_{i\in \L_1, j \in \L_2}
           \sum_k \sum_{r,r'} \mu_j (k+r) \mu_j (k+r') \, \cdot \,
           \Big| \sum_m \l_i (m-k) f(r,m) f(r',m) \Big|^2 \,.
\]
It follows that
\[
  \sum_{i\notin \Omega_1, j \notin \Omega'_2 (i)} |\sigma_{ij}|
    \le
        8 \a^{1/4} \d^{3/4} \beta^{2}_1 \beta^{2}_2 |\L'|^{2} |\L_1|^2 |\L_2|
                +
\]
\begin{equation}\label{f:almost_final}
                +
                    4 \a_0^{1/6} |\L'|^2 |\L_1|^2 |\L_2| \,.
\end{equation}
Using  (\ref{f:4+R}), (\ref{e:sigma'}), (\ref{est:*}) and
(\ref{f:almost_final}), we have
\[
  |\sigma_0| \le 16 \k |\L_1|^2 |\L_2| + 8 \a_0^{1/2} |\L_1|^2 |\L_2| +
                 32 \a_0^{1/2} |\L_1|^2 |\L_2| + 4 \a_0^{1/6} |\L'|^2 |\L_1|^2 |\L_2|
\]
\[
                 + 8 \a^{1/4} \d^{3/4} \beta_1^2 \beta_2^2 |\L_1|^2 |\L_2|
             \le 16 \a^{1/4} \d^{3/4} \beta_1^2 \beta_2^2 |\L_1|^2 |\L_2|
\]
as required.
\end{proof*}

  The next result is the main in this section.

Let $\L_1$, $\L_2$ be Bohr sets, $\L_1 \le \L_2$, $\L_1 = \L
(S,\eps_1)$, $S \subseteq \F{G}$ and let $E_1 \subseteq \L_1$,
$E_2 \subseteq \L_2$, $|E_1| = \beta_1 |\L_1|$, $|E_2| = \beta_2
|\L_2|$. By $\mathcal{P}$ denote the product set $E_1 \m E_2$.

\begin{theorem}
Let  $A$ be an arbitrary subset of $E_1\m E_2$ of cardinality
$\delta |E_1||E_2|$. Suppose that  the sets $E_1,E_2$ are
$(\a_0,2^{-10} \eps^2)$--uniform, $\a_0 = 2^{-2000} \d^{96}
\beta_1^{48} \beta_2^{48}$, $\eps = (2^{-100}  \a_0^2 ) / (100
d)$. Let  $A$ be rectilinearly $(\a,\a_1,\eps)$--uniform, $\a =
2^{-100} \d^{9}$, $\a_1 = 2^{-7}$, and
\begin{equation}\label{COND:N}
   \log N \ge 2^{10} d \log \frac{1}{\eps_1 \eps} \,.
\end{equation}
Then either $A$ contains a triple $\{ (k,m), (k+d,m), (k,m+d) \}$,
where $d\neq 0$ or there exists a Bohr set $\t{\L}$, two sets
$F_1$, $F_2$ and a vector $\v{y} = (y_1, y_2) \in G\m G$, $F_1
\subseteq E_1 \cap (\t{\L} + y_1) $, $F_2 \subseteq E_2 \cap
(\t{\L} + y_2) $ such that $\t{\L}$ is an $2^{-4}
\eps^2$--attendant
of $\L_1$ and
\begin{equation}\label{est::delta''}
  \quad |F_1| \ge 2^{-20} \beta_1 |\t{\L}|,  \quad
        |F_2| \ge 2^{-20} \beta_2 |\t{\L}| \,\, \mbox{ and }
\end{equation}
\begin{equation}\label{est::card''}
  \quad  \d_{F_1 \m F_2} (A) \ge \frac{3}{2} \d \,.
\end{equation}
\label{a_case}
\end{theorem}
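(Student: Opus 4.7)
The plan is to derive Theorem \ref{a_case} from Theorem \ref{t:tmp_tha} by a localization argument that upgrades the rectilinear $(\alpha,\alpha_1,\eps)$-uniformity of $A$ into rectilinear $(\alpha,\eps)$-uniformity slice-by-slice. I count corners by
\[
  T = \sum_{s_1,s_2,r} A(s_1,s_2)\, A(s_1+r,s_2+r)\, A(s_1,s_2+r),
\]
so that the $r=0$ diagonal contribution equals $|A| = \d\beta_1\beta_2|\L_1||\L_2|$, and finding a corner ($d\neq 0$) reduces to showing $T > |A|$. Writing $A = \d(E_1\m E_2) + f$ in the third factor splits $T = \d T_{\mathrm{main}} + T_{\mathrm{err}}$, where $T_{\mathrm{err}}$ is precisely the sum appearing in (\ref{f:f_residual}) that Theorem \ref{t:tmp_tha} controls.

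To handle $T_{\mathrm{err}}$ under the weaker hypothesis of Theorem \ref{a_case}, I smooth the outer Bohr set $\L_1$ by an $\eps_0$-attendant $\L'$ via Lemma \ref{l:L_pm}, rewriting
\[
  T_{\mathrm{err}} = \frac{1}{|\L'|}\sum_{l\in\L_1}\sum_{s_1,s_2,r} \L'(s_1-l)\, A(s_1,s_2)\, A(s_1+r,s_2+r)\, f(s_1,s_2+r) + O(\k|\L_1|^2|\L_2|).
\]
For each $l$ the inner sum is an instance of (\ref{f:f_residual}) on the patch $((E_1-l)\cap\L')\m E_2$, with balanced function $f_l$ of Definition \ref{def:rectangle}. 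Call $l\in\L_1$ \emph{good} if simultaneously: (a) $(E_1-l)\cap\L'$ is $(\a'_0,\eps)$-uniform for the parameter $\a'_0$ demanded by Theorem \ref{t:tmp_tha}; (b) $f_l$ is rectilinearly $(\a,\eps)$-uniform on $\L'\m\L_2$; and (c) the local density $\d_l := |A\cap((\L'+l)\m\L_2)|/(|E_1\cap(\L'+l)||E_2|)$ lies within $\d/4$ of $\d$. Lemma \ref{l:intermediate}(3) applied to the $(\a_0,2^{-10}\eps^2)$-uniform $E_1$ excludes at most $O(\a_0^{1/2})|\L_1|$ violators of (a), with the calibration $\a_0 = 2^{-2000}\d^{96}\beta_1^{48}\beta_2^{48}$ chosen so that the $\a_0^{1/4}$ loss in that lemma still undercuts $\a'_0$. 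The definition of rectilinear $(\a,\a_1,\eps)$-uniformity bounds the violators of (b) by $\a_1|\L_1|$, and (c) is a routine averaging.

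On each good slice I invoke Theorem \ref{t:tmp_tha}. If some good $l$ yields the density-increment alternative, then the resulting Bohr set --- an $\eps_0$-attendant of $\L'$, hence a $2^{-4}\eps^2$-attendant of $\L_1$ by composing nested attendants --- together with sets $F_1\subseteq E_1\cap(\t{\L}+y_1+l)$ and $F_2\subseteq E_2\cap(\t{\L}+y_2)$ deliver the conclusion of Theorem \ref{a_case}: condition (c) gives $\d_{F_1\m F_2}(A) \ge 2\d_l \ge (3/2)\d$, and the weakening from $2^{-2}\beta_i$ to $2^{-20}\beta_i$ in the relative-size bound absorbs the constants accumulated in passing between $\L'$, $\t{\L}$, and $\L_1$. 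Otherwise every good slice satisfies the residual bound (\ref{f:f_residual}), so summing over $l$ and adding the trivial $O((\a_1+\a_0^{1/2})\beta_1\beta_2|\L_1|^2|\L_2|)$ from bad slices yields $|T_{\mathrm{err}}| \le c\d^3\beta_1^2\beta_2^2|\L_1|^2|\L_2|$ with $c$ small, since $\a = 2^{-100}\d^9$ makes $16\a^{1/4}\d^{3/4}$ a tiny multiple of $\d^3$. A standard $L^2$-estimate via Corollary \ref{c:sv} and the $(\a_0,\eps)$-uniformity of $E_1,E_2$ gives $T_{\mathrm{main}} \ge c_1\d^2\beta_1^2\beta_2^2|\L_1|^2|\L_2|$ for an absolute $c_1>0$, so $T > |A|$ once $|\L_1| \ge C\d^{-2}(\beta_1\beta_2)^{-1}$, which follows from hypothesis (\ref{COND:N}) via Lemma \ref{l:Bohr_est}.

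The main obstacle is the parameter bookkeeping. Theorem \ref{t:tmp_tha} requires $E_i$ be $(\a'_0,\eps')$-uniform at a scale derived from $\a = 2^{-100}\d^9$ and $\beta_1,\beta_2$; Lemma \ref{l:intermediate}(3) loses a fourth root in passing from the global hypothesis to this local requirement, and the peculiar exponent $96$ in $\a_0 = 2^{-2000}\d^{96}\beta_1^{48}\beta_2^{48}$ is engineered to exactly survive this loss. A secondary issue is verifying that the locally-produced $F_1, F_2$ fit inside a single $\t{\L}$-translate after undoing the shift by $l$, so that (\ref{est::delta''}) holds with the stated relative sizes rather than requiring any further averaging argument.
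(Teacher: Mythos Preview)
Your global averaging strategy differs from the paper's in a way that breaks down with $\a_1=2^{-7}$. The paper does \emph{not} decompose the corner count over all slices $l$; instead it averages only the \emph{mass} $|A\cap((\L'+l)\times\L_2)|$ over $l$, uses a density-increment escape on any over-heavy slice, then selects a \emph{single} good $i_0$ and counts corners entirely inside $(\L'+i_0)\times\L_2$. There both copies $H=W=G_{i_0}$ of $A$ in the corner form are automatically supported on the same patch, so Theorem~\ref{t:tmp_tha} applies cleanly with $\L_1$ replaced by $\L'$, and the bad slices never enter the corner count at all.

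Your approach of summing $T_{\mathrm{err}}$ over all slices fails for two reasons. First, the bad-slice contribution you call ``trivial $O((\a_1+\a_0^{1/2})\beta_1\beta_2|\L_1|^2|\L_2|)$'' is, with $\a_1=2^{-7}$, of order $2^{-7}\beta_1\beta_2|\L_1|^2|\L_2|$, which dominates the main term $\d T_{\mathrm{main}}\asymp\d^3\beta_1^2\beta_2^2|\L_1|^2|\L_2|$ whenever $\d^3\beta_1\beta_2$ is small; there is no way to beat this without either taking $\a_1$ polynomial in $\d\beta_1\beta_2$ (which the theorem does not assume) or removing bad slices from the count entirely, as the paper does. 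Second, even on a good slice your inner sum is not an instance of~(\ref{f:f_residual}) on the patch: smoothing $\L_1(s_1)$ localizes $H$ and $f$ to $\L'+l$, but $W(s_1+r,\cdot)=A(s_1+r,\cdot)$ remains supported on all of $\L_1$. Theorem~\ref{t:tmp_tha} requires $H$, $W$, $f$ to live on a common $E_1\times E_2\subseteq\L_1\times\L_2$ (with $H=W=A$), and its proof uses this to confine the auxiliary variables $k,m,u$ to translates of size $O(|\L_1|)$; with $W$ living on the larger ambient Bohr set that confinement, and hence the bound, is lost.
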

\begin{proof*}
Let $\L'$ be an $\eps$--attendant set of $\L_1$ to be chosen
later, and $\l_i = \L' + i$, $i \in \L_1$.
Let $G_i = (\l_i \m \L_2) \cap A$, $f_i (\v{s}) = f(s_1 + i, s_2)
\L'(s_1, s_2) $, $i\in \L_1$. By $G_i$ denote the characteristic
functions of the sets $G_i$. Let
\[
  B_1 = \{ i \in \L_1 ~|~ E_1 \cap \l_i  \mbox { is not } (8 \a_0^{1/4}, \eps) \mbox{--uniform}  \} \,,
\]
\[
  B_2 = \{ i \in \L_1 ~|~  | \d_{\l_i} (E_1) - \beta_1 | \ge 4 \a_0^{1/2}  \} \,,
\]
\[
  B_3 = \{ i \in \L_1 ~|~ \| f_i \|^4_{\L' \m \L_2,\eps} >
         \a \beta_1^2 \beta_2^2 |\L'_\eps|^4 |\L'|^2 |\L_2| \},
  \mbox{ and } B = B_1 \cup B_2 \cup B_3 \,.
\]
By assumption $E_1$ is $(\a_0,\eps)$--uniform. By Lemma
\ref{l:intermediate}, we get $|B_1| \le 8 \a_0^{1/4} |\L_1|$. and
$|B_2| \le 8 \a_0^{1/4} |\L_1|$. Since  $A$ is rectilinearly
$(\a,\a_1, \eps)$--uniform,  it follows that $|B_3| \le \a_1
|\L_1|$. Hence $|B| \le 16 \a_0^{1/4} |\L_1| + \a_1 |\L_1| \le 2
\a_1 |\L_1|$.

Using Lemma \ref{l:L_pm}, we obtain
\begin{equation}\label{tmp:20:08:1}
  A(\v{s}) = \frac{1}{|\L'|} \cdot \sum_{i\in \L_1} G_i (\v{s}) + \epsilon (\v{s}) \,,
\end{equation}
where
$\| \epsilon \|_{1} \le 2\k |\L_1| |\L_2|$, $\k =  \a_0^2$.
Consider the sum
\begin{equation}\label{}
  \sigma = \frac{1}{|\L'|} \sum_{i\in \L_1} \sum_{x,y} G_i (x+y,y) \,.
\end{equation}
We have $|A| = \d \beta_1 \beta_2 |\L_1| |\L_2|$. Using
(\ref{tmp:20:08:1}), we get
\begin{equation}\label{tmp_14:36.0}
\sigma \ge \frac{7\d \beta_1 \beta_2 }{8} |\L_1| |\L_2| \,.
\end{equation}
Split $\sigma$ as
\begin{equation}\label{tmp_14:36.0'}
  \sigma = \frac{1}{|\L'|}  \sum_{i\in B} \sum_{x,y} G_i (x+y,y)
            +
           \frac{1}{|\L'|} \sum_{i\notin B} \sum_{x,y} G_i (x+y,y)
         = \sigma_1 + \sigma_2 \,.
\end{equation}

Let us estimate $\sigma_1$.
We have
\begin{equation}\label{}
    \sigma_1
        =
            \frac{1}{|\L'|}  \sum_{i\in B_3 \setminus (B_1 \cup B_2)} \sum_{x,y} G_i (x+y,y)
                +
                    \frac{1}{|\L'|}  \sum_{i\in B_1 \cup B_2} \sum_{x,y} G_i (x+y,y)
                        \le
\end{equation}
\begin{equation}\label{}
                        \le
                            \frac{1}{|\L'|}  \sum_{i\in B_3 \setminus (B_1 \cup B_2)} \sum_{x,y} G_i (x+y,y)
                                +
                                    16 \a^{1/4}_0 |\L_1| |\L_2| \,.
\end{equation}
Suppose that there exists $i\notin B_1 \cup B_2$ such that
$$
    \sum_{x,y} G_i (x+y,y) \ge 4\d \beta_1 \beta_2 |\L'| |\L_2| \,.
$$
In other words
$$
    \sum_{x,y} G_i (x,y) \ge 4\d \beta_1 \beta_2 |\L'| |\L_2| \,.
$$
Put
$y_1 = i$, $y_2=0$ and $F_1 = (\L' + i) \cap E_1$. Since  $i
\notin B_2$, it follows that $|F_1| \ge \beta_1 |\L'| /2$.
Using simple average arguments we see that there exists an element
$a$ such that $F_2 = (\L' + a) \cap E_2$ has the cardinality at
least $\beta_2 |\tilde{\L}_1| /2$ and for $\v{y} = (i,a)$ we have
\[
  |A \cap (F_1 \m F_2) | > 2\d |F_1| |F_2| \,.
\]
Thus we get (\ref{est::delta''}), (\ref{est::card''}) and the
theorem is proven in the case.

We have $\a_1 = 2^{-7}$. Using $|B_3| \le \a_1 |\L_1|$ and
$\a_0^{1/4} \le 2^{-4} \a_1 \beta_1 \beta_2$, we obtain
\begin{equation}\label{}
    \sigma_1
        \le
            4 \d \beta_1 \beta_2 |\L'| |B_3| |\L_2|
                +   16 \a^{1/4}_0 |\L_1| |\L_2|
                    \le
                        2^{-3} \d \beta_1 \beta_2 |\L'| |\L_1| |\L_2| \,.
\end{equation}
Using this and (\ref{tmp_14:36.0}), (\ref{tmp_14:36.0'}), we
obtain
\begin{equation}\label{tmp_14:40}
  \frac{1}{|\L'|} \sum_{i\notin B} \sum_{x,y} G_i (x+y,y) \ge \frac{3\d \beta_1 \beta_2 }{4} |\L_1| |\L_2| \,.
\end{equation}
The formula (\ref{tmp_14:40}) implies that there exists $i_0\notin
B$ such that
\begin{equation}\label{tmp_14:40+}
  \sum_{x,y} G_{i_0} (x+y,y)  \ge
                                  \frac{3}{4} \d \beta_1 \beta_2 |\L'| |\L_2| \,.
\end{equation}
Let $G'(\v{s}) = G_{i_0} (\v{s})$. We have
\begin{equation}\label{tmp_14:47}
  \sum_k \sum_m G'(k+m,m) \ge 2^{-3} \d \beta_1 \beta_2 |\L'| |\L_2| \,.
\end{equation}
We have $m\in \L_2$ and $k+m \in \l_i$. It follows that $k\in \l_i
- \L_2$. Using Lemma \ref{l:L_pm} we obtain that $k$ belongs to  a
set of cardinality at most $2|\L_2|$. By the Cauchy--Schwartz
inequality, we get
\begin{equation}\label{}
  2^{-6} \d^2 \beta_1^2 \beta_2^2 |\L'|^2 |\L_2|^2 \le \sum_k \Big( \sum_m G'(k+m,m) \Big)^2 \cdot 2|\L_2| \,.
\end{equation}
It follows that
$$
  \sum_k \Big( \sum_m G'(k+m,m) \Big)^2
    =
        \sum_k \sum_{m,p} G'(k+m,m) G'(k+p,p)
            \ge
$$
\begin{equation}\label{e:main_term}
                                  \ge 2^{-7} \d^2 \beta_1^2 \beta_2^2 |\L'|^2 |\L_2| \,.
\end{equation}
Consider the sum
\begin{equation}\label{e:num_corners}
  \sigma_0 = \sum_{s_1,s_2,r} G'(s_1,s_2) G'(s_1 + r, s_2 + r) A(s_1, s_2 + r) \,.
\end{equation}
We have
$$
  G'(s_1,s_2) G'(s_1 + r, s_2 + r) f(s_1, s_2 + r) =
$$
\begin{equation}\label{tmp:kinder_g}
    = G'(s_1,s_2) G'(s_1 + r, s_2 + r) f_{i_0} (s_1, s_2 + r) \,,
\end{equation}
where $f_{i_0}$ is the restriction of the function $f$ to $G'$. It
follows that
$$
 \sigma_0 = \d \sum_{s_1,s_2,r} G'(s_1,s_2) G'(s_1 + r, s_2 + r) \mathcal{P} (s_1, s_2 + r)
            +
$$
$$
            +
            \sum_{s_1,s_2,r} G'(s_1,s_2) G'(s_1 + r, s_2 + r) f(s_1, s_2 + r) =
$$
\begin{equation}\label{tmp_15:15}
          = \d \sum_{s_1,s_2,r} G'(s_1,s_2) G'(s_1 + r, s_2 + r)
            +
            \sum_{s_1,s_2,r} G'(s_1,s_2) G'(s_1 + r, s_2 + r) f_{i_0} (s_1, s_2 + r) \,.
\end{equation}
The inequality (\ref{e:main_term}) implies that the first term in
(\ref{tmp_15:15}) is greater than \\
$2^{-7} \d^3 \beta_1^2 \beta_2^2 |\L'|^2 |\L_2|$.
Since  $i_0 \notin B$, it follows that $\| f_{i_0} \|^4 \le \a
\beta_1^2 \beta_2^2 |\L'|^2 |\L_2|$ and $\d_{\l_{i_0}} (E_1) \le 2
\beta_1 $. By assumption $\a = 2^{-100} \d^{9}$. Using Theorem
\ref{t:tmp_tha} and (\ref{tmp_14:40+}), we obtain that either the
second term in (\ref{tmp_15:15}) does not exceed
$$
    2^{10} \a^{1/4} \d^{3/4} \beta_1^2 \beta_2^2 |\L'|^2 |\L_2|
  \le
    2^{-8} \d^3 \beta_1^2 \beta_2^2 |\L'|^2 |\L_2|
$$
or there is a vector $\v{y} = (y_1, y_2) \in G\m G$ and two sets
$F_1 \subseteq E_1 \cap (\t{\t{\L}} + y_1) $, $F_2 \subseteq E_2
\cap (\t{\t{\L}} + y_2)$ such that (\ref{est::delta''}),
(\ref{est::card''}) hold. If we have the second situation then we
are done and $\t{\t{\L}}$ is an $\eps$--attendant of $\L'$. In the
other case $\sigma_0 \ge 2^{-7} \d^3 \beta_1^2 \beta_2^2 |\L'|^2
|\L_2|$.

The sum (\ref{e:num_corners}) is the number of triples $\{ (k,m),
(k+d,m), (k,m+d) \}$, where $k \in \L_{i_0}$, $m\in \L_2$, $d\in
G$. The number of triples with $d=0$ does not exceed $|\L'|
|\L_2|$. By assumption $\log N \ge 2^{10} d \log \frac{1}{\eps_1
\eps} $. Using Lemma \ref{l:Bohr_est}, we get $|\L'| > 2^8 ( \d^3
\beta_1^2 \beta_2^2 )^{-1}$. Hence, $2^{-8} \d^3 \beta_1^2
\beta_2^2 |\L'|^2 |\L_2| > |\L'| |\L_2|$. It follows that $A$
contains a triple $\{ (k,m), (k+d,m), (k,m+d) \}$ with $d\neq 0$.
This completes the proof.
\end{proof*}


\section{Non--uniform case.}
\label{sec_non-uniform_case}

\begin{lemma}
Let $\L_1$, $\L_2$ be Bohr sets, $\L_1 \le \L_2$, and $\L'$ be an
$\eps$--attendant set of $\L_1$, $\eps = \k / (100 d)$. Let set
$A$ be a subset of  $C \subseteq \L_1 \m \L_2$ of cardinality $\d
|C|$. By $B$ define the set of $s \in \L_1$ such that $|A\cap
((\L' + s) \m \L_2) | < (\d - \eta) |C\cap ((\L' + s) \m \L_2) |$,
where $\eta > 0$. Then
\[
  \sum_{s \in (\L_1 \setminus B) } |A\cap ((\L' + s) \m \L_2)| \ge
  \d \sum_{s \in (\L_1 \setminus B)} |C\cap ((\L' + s) \m \L_2)|
        +
\]
\[
        +
  \eta \sum_{s \in B} |C\cap ((\L' + s) \m \L_2)| - 4 \k |\L'| |\L_1| |\L_2| \,.
\]
\label{l:easy_case}
\end{lemma}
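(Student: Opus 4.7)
The plan is a straightforward double-counting argument based on the smoothing properties of Bohr sets (Lemma \ref{l:L_pm}). I will show that summing $|A\cap((\L'+s)\m\L_2)|-\d|C\cap((\L'+s)\m\L_2)|$ over all $s\in\L_1$ gives essentially zero (up to a smoothing error of the advertised size), and then split this sum into contributions from $B$ and $\L_1\setminus B$.

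First I would rewrite each sum as a convolution. For any set $X\subseteq\L_1\m\L_2$,
\[
  \sum_{s\in\L_1} |X\cap((\L'+s)\m\L_2)| = \sum_{(x,y)\in X} \sum_{s\in\L_1} \L'(x-s) = \sum_{(x,y)\in X} (\L_1 * \L')(x).
\]
Since $X\subseteq\L_1\m\L_2$, the sum is supported on $x\in\L_1$. Lemma \ref{l:L_pm} gives
$\big\| (\L_1*\L')(x) - |\L'|\L_1(x) \big\|_1 \le 2\k|\L'||\L_1|$ (after multiplying the estimate (\ref{2k}) by $|\L'|$), so for each fixed $y$ the sum $\sum_x X(x,y)\big[(\L_1*\L')(x)-|\L'|\L_1(x)\big]$ is at most $2\k|\L'||\L_1|$ in absolute value, because $X(\cdot,y)\le 1$. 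Summing over the $\le|\L_2|$ possible values of $y$, I obtain
\[
  \Big| \sum_{s\in\L_1} |X\cap((\L'+s)\m\L_2)| - |\L'||X| \Big| \le 2\k|\L'||\L_1||\L_2|.
\]

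Applying this estimate to $X=A$ and $X=C$ and using $|A|=\d|C|$, the main terms $|\L'||A|$ and $\d|\L'||C|$ cancel, yielding
\[
  \Big| \sum_{s\in\L_1} \big[ |A\cap((\L'+s)\m\L_2)| - \d |C\cap((\L'+s)\m\L_2)| \big] \Big| \le 4\k|\L'||\L_1||\L_2|.
\]
Now I split the sum according to $s\in B$ or $s\in\L_1\setminus B$. By the very definition of $B$, for $s\in B$ we have $|A\cap((\L'+s)\m\L_2)|-\d|C\cap((\L'+s)\m\L_2)| < -\eta|C\cap((\L'+s)\m\L_2)|$, so
\[
  \sum_{s\in B} \big[ |A\cap((\L'+s)\m\L_2)| - \d |C\cap((\L'+s)\m\L_2)| \big]
  \le -\eta \sum_{s\in B} |C\cap((\L'+s)\m\L_2)|.
\]
Combining the previous two displays and rearranging gives exactly the claimed inequality.

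The argument involves no real obstacle: the only point requiring care is that the smoothing error from Lemma \ref{l:L_pm} is an $\ell^1$ bound on the convolution, and when multiplied by the indicator of $A$ (or $C$) it must be summed over the second coordinate $y\in\L_2$, which produces the factor $|\L_2|$ in the final error term. The cancellation of the main terms is automatic from the hypothesis $|A|=\d|C|$.
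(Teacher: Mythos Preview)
Your proof is correct and follows essentially the same approach as the paper: both arguments use Lemma~\ref{l:L_pm} to show that $\sum_{s\in\L_1}|X\cap((\L'+s)\times\L_2)|$ equals $|\L'|\,|X|$ up to an error of $2\k|\L'||\L_1||\L_2|$, apply this to $X=A$ and $X=C$, and then split the resulting near-identity over $B$ and $\L_1\setminus B$ using the defining inequality of $B$. The only cosmetic difference is that you first state the smoothing estimate for a general $X$ and then specialize, whereas the paper treats $A$ and $C$ in parallel from the start.
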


\begin{proof*}
Using Lemma \ref{l:L_pm}, we get
\begin{equation}\label{tmp_13:12;}
  \d |C| = \sum_{\v{s}} A(\v{s}) \L_1 (k) \L_2(m)  =
            \frac{1}{|\L'|} \sum_{n\in \L_1} \sum_{\v{s}} A(\v{s}) ((\L' + n) \m \L_2) (\v{s}) + 2 \vartheta \k |\L_1| |\L_2| \,,
\end{equation}
where $|\vartheta| \le 1$.
Split the sum (\ref{tmp_13:12;}) into a sum over $n \in B$ and a sum over $n \in \L_1\setminus B$.                
We have
\[
  \d |C| < \frac{1}{|\L'|} (\d - \eta) \sum_{n \in B} |C\cap ((\L' + n) \m \L_2)|
                +
\]
\begin{equation}\label{tmp:11113}
                +
            \frac{1}{|\L'|} \sum_{n \in (\L_1 \setminus B)} |A \cap ((\L' + n) \m \L_2)|
            +
            2 \k |\L_1| |\L_2| \,.
\end{equation}
In the same way
\begin{equation}\label{tmp:11132}
  |C| = \frac{1}{|\L'|} \sum_{n \in B} |C\cap ((\L' + n) \m \L_2)|
                +
            \frac{1}{|\L'|} \sum_{n \in (\L_1 \setminus B)} |C\cap ((\L' + n) \m \L_2)| + 2\vartheta_1 \k |\L_1| |\L_2| \,,
\end{equation}
where $|\vartheta_1| \le 1$. Combining  (\ref{tmp:11113}) and
(\ref{tmp:11132}), we obtain the required result.
\end{proof*}

Let $X$ be a finite set, $\mu$ be a measure on $X$ and let $Z : X
\to \mathbf{R}$ be a function. By $\mathbf{E} Z$ denote the sum
$\frac{1}{|X|} \sum_{x\in X} Z(x)$. The following lemma is
well--known (see e.g. \cite{Lacey}).

\begin{lemma}
    Let $p$ be a real number.
    Suppose that $Z : X \to [-1,1]$ is a function such that
    $\mathbf{E} Z = 0$ and $\mathbf{E} |Z|^p = \sigma^p$.
    Then
    \begin{equation}\label{f:inequality_Paley}
        \mu \left\{ x\in X ~:~ Z > \frac{\sigma^p}{5} \right\} \ge \frac{\sigma^p}{5} \,.
    \end{equation}
\label{l:Paley}
\end{lemma}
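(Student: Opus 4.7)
The plan is a Paley--Zygmund-type contradiction argument. Write $Z = Z_{+} - Z_{-}$ with $Z_{+} = \max(Z,0)$ and $Z_{-} = \max(-Z,0)$, so that $|Z| = Z_{+} + Z_{-}$ pointwise. Since $\mathbf{E} Z = 0$ we immediately obtain $\mathbf{E} Z_{+} = \mathbf{E} Z_{-} = \tfrac{1}{2}\mathbf{E}|Z|$, which reduces the problem to producing a suitable lower bound for $\mathbf{E} Z_{+}$ in terms of $\sigma^{p}$.

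Next I would exploit the constraint $|Z| \le 1$ to pass from the hypothesis on $\mathbf{E}|Z|^{p}$ to a bound on $\mathbf{E}|Z|$. For $p \ge 1$ one has $|Z|^{p} \le |Z|$ pointwise, hence $\sigma^{p} = \mathbf{E}|Z|^{p} \le \mathbf{E}|Z|$, and so $\mathbf{E} Z_{+} \ge \sigma^{p}/2$.

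Finally, argue by contradiction: suppose the set $B := \{x \in X : Z(x) > \sigma^{p}/5\}$ satisfies $\mu(B) < \sigma^{p}/5$. Split the expectation as
\[
  \mathbf{E} Z_{+} = \int_{B} Z_{+}\, d\mu + \int_{X \setminus B} Z_{+}\, d\mu.
\]
On $B$ we bound $Z_{+} \le 1$, giving $\int_{B} Z_{+}\, d\mu \le \mu(B) < \sigma^{p}/5$; on $X \setminus B$ we have $Z_{+} \le \sigma^{p}/5$, giving $\int_{X \setminus B} Z_{+}\, d\mu \le \sigma^{p}/5$. Adding the two bounds yields $\mathbf{E} Z_{+} < 2\sigma^{p}/5 < \sigma^{p}/2$, contradicting the lower bound from the previous step, and hence $\mu\{Z > \sigma^{p}/5\} \ge \sigma^{p}/5$.

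The only slightly delicate point is the passage from $\mathbf{E}|Z|^{p}$ to $\mathbf{E}|Z|$: this is where the hypothesis $Z : X \to [-1,1]$ is used, and it is also the reason the threshold in the conclusion is $\sigma^{p}$ rather than $\sigma$. The rest of the argument is a mechanical three-line truncation of the positive part of $Z$ and would not be expected to present any obstacle.
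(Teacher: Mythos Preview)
Your proof is correct and is essentially the same argument as the paper's: both use $\mathbf{E}Z=0$ to get $\mathbf{E}Z_{+}=\tfrac12\mathbf{E}|Z|$, use $|Z|\le 1$ (with $p\ge 1$) to pass from $\mathbf{E}|Z|^{p}$ to $\mathbf{E}|Z|$, and then reach a contradiction by truncating $Z_{+}$ at the level $\sigma^{p}/5$. The paper merely reverses the order of the two steps, first bounding $\mathbf{E}Z\mathbf{1}_{\{Z>0\}}\le 2\sigma^{p}/5$ under the contradiction hypothesis and then comparing with $\sigma^{p}\le 2\,\mathbf{E}Z\mathbf{1}_{\{Z>0\}}$; your observation that the step $|Z|^{p}\le |Z|$ needs $p\ge 1$ applies equally to the paper's version.
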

\begin{proof}
    Suppose that (\ref{f:inequality_Paley}) does not hold.
    Since $\mathbf{E} Z = 0$ it follows that
    $$
        -\mathbf{E} Z \mathbf{1}_{\{Z<0\}}
            = \mathbf{E} Z \mathbf{1}_{\{Z>0\}}
                \le
                    \mu \{ x ~:~ Z > 5^{-1} \sigma^p \} + \mathbf{E} Z \mathbf{1}_{\{0<Z\le 5^{-1} \sigma^p \}}
                        \le
                            \frac{2}{5} \sigma^p \,,
    $$
    where $\mathbf{1}_{\{Z<0\}}$, $ \mathbf{1}_{\{Z>0\}}$ are the characteristics functions of the sets
    $\{x ~:~ Z(x) < 0 \}$, $\{ x ~:~ Z(x) > 0\}$ respectively.
    We have $|Z(x)| \le 1$ for all $x\in X$.
    Hence
    \begin{equation}\label{}
        \sigma^p = \mathbf{E} |Z|^p = \mathbf{E} |Z|^p \mathbf{1}_{\{Z<0\}} + \mathbf{E} |Z|^p \mathbf{1}_{\{Z>0\}}
            \le 2 \mathbf{E} Z \mathbf{1}_{\{Z>0\}} \le \frac{4}{5} \sigma^p
    \end{equation}
    with contradiction.
\end{proof}

We need in the proposition concerning the properties of not
rectilinearly $\a$--uniform sets. The similar proposition was
proven in \cite{Shkr_tri,Shkr_tri_LMS_DAN,Green_BCC,Lacey}.

\begin{proposition}
Let $A$ be a subset of  $E_1\times E_2$ of cardinality $|A| =
\delta |E_1| |E_2|$. Suppose that $\a >0$ is a real number, $\a
\le \d^4 /8$, and $A$ is not rectilinearly $\a$--uniform. Then
there are two sets $F_1 \subseteq E_1$ and $F_2 \subseteq E_2$
such that
\begin{equation}\label{conj1+}
  |A\bigcap (F_1 \m F_2)| > (\d + 2^{-15} \cdot \min\{ \a^2 \d^{-5}, \a \d^{-2} \} ) |F_1||F_2| \quad
\mbox{ and }
\end{equation}
\begin{equation}\label{conj2+}
    |F_1| \ge 2^{-15} \min\{ \a^2 \d^{-5}, \a \d^{-2} \} \cdot |E_1|\,, \quad
    |F_2| \ge 2^{-15} \min\{ \a^2 \d^{-5}, \a \d^{-2} \} \cdot |E_2| \,.
\end{equation}
\label{na_case_pr+}
\end{proposition}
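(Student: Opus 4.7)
The plan is to exploit the failure of rectilinear $\a$--uniformity via an $L^2$-type extraction followed by a Paley--Zygmund step (Lemma \ref{l:Paley}) to localize to a combinatorial rectangle. By definition,
\[
  \|f\|^4 = \sum_{x,x' \in E_1} g(x,x')^2 > \a |E_1|^2 |E_2|^2\,, \qquad g(x,x') := \sum_{y\in E_2} f(x,y) f(x',y)\,.
\]
Note $|g(x,x')| \le |E_2|$. Averaging over $x \in E_1$, I pick a distinguished column $x_0 \in E_1$ with $\sum_{x'\in E_1} g(x_0,x')^2 \ge \a |E_1| |E_2|^2$, i.e.\ $\mathbf{E}_{x' \in E_1} g(x_0,x')^2 \ge \a |E_2|^2$.

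Next I convert this $L^2$ information into a pointwise lower bound on a large set. Set $m := \mathbf{E}_{x' \in E_1} g(x_0,x')$ and $Z(x') := (g(x_0,x')-m)/|E_2|$, so $|Z|\le 1$ and $\mathbf{E} Z = 0$. I split into two regimes. If $m \ge c_1\sqrt{\a}\,|E_2|$, the mean itself already encodes a density increment: expanding $m = \sum_y f(x_0,y)\bigl(\sum_{x'}f(x',y)\bigr)$ relates it directly to $A \cap (E_1 \m A_{x_0})$ and yields the weaker (but here larger) increment of order $\a\d^{-2}$. If $|m|$ is smaller, then $\mathbf{E} Z^2 \ge \a/2$, and Lemma \ref{l:Paley} with $p=2$ produces a set $F_1 \subseteq E_1$ of cardinality $\gg \a |E_1|$ on which $g(x_0, x') - m > c_2\, \a |E_2|$.

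With the set $F_1$ in hand, I take $F_2 := A_{x_0} \cap E_2$ (the fiber of $A$ over $x_0$). Using $A(x_0,y) = f(x_0,y) + \d\, E_2(y)$ for $y \in E_2$, I compute
\[
  |A \cap (F_1 \m F_2)| - \d |F_1| |F_2|
     = \sum_{x' \in F_1} \sum_{y \in F_2} f(x',y)
     = \sum_{x' \in F_1} g(x_0, x') + \d \sum_{x' \in F_1} \bigl(|A_{x'}| - \d |E_2|\bigr)\,.
\]
A preliminary restriction of $x_0$ to "typical" columns (those with $|A_{x_0}| \approx \d |E_2|$, available because such columns form a set of size $\ge (1-c\d)|E_1|$) ensures that $|F_2| \gg \d |E_2|$, and a secondary averaging over $x' \in F_1$ keeps the second sum above under control. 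The positive lower bound on $g(x_0, x')$ over $F_1$ then converts into the required increment $\d \to \d + 2^{-15} \min(\a^2 \d^{-5}, \a\d^{-2})$.

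The main obstacle is the bookkeeping in the centering/case split: the two competing bounds $\a\d^{-2}$ and $\a^2\d^{-5}$ arise respectively from the mean-dominated regime (where $|m|$ is itself $\gtrsim \sqrt{\a}|E_2|$, and one trades $\sqrt{\a}$ directly for $\a\d^{-2}$ after normalizing by $|A_{x_0}|\sim \d|E_2|$) and from the variance regime (where Paley gives a set of size $\sim\a|E_1|$ on which $g$ is of size $\sim\a|E_2|$, producing a relative increment of order $\a^2/\d^5$ after normalization by $|F_1||F_2|$). One must also verify that $x_0$ can be chosen so that $|A_{x_0}|$ is not anomalously small, so that the fiber $F_2$ has cardinality comparable to $\d |E_2|$; this is achieved by a standard preprocessing step restricting the outer average over $x$ to the set $\{x \in E_1 : |A_x| \ge \d |E_2|/2\}$, whose contribution to $\|f\|^4$ is still $\gg \a |E_1|^2|E_2|^2$ as long as $\a \le \d^4/8$, which is our hypothesis.
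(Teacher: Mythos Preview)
Your argument has a real gap in the ``variance regime''. After Paley--Zygmund produces $F_1$ on which $g(x_0,x') - m > c\,\a |E_2|$, you write the identity
\[
  |A\cap(F_1\m F_2)| - \d|F_1||F_2| \;=\; \sum_{x'\in F_1} g(x_0,x') \;+\; \d\sum_{x'\in F_1}\bigl(|A_{x'}|-\d|E_2|\bigr)
\]
and claim that ``a secondary averaging over $x'\in F_1$'' controls the second sum. It does not. The set $F_1$ was chosen to make $g(x_0,\cdot)$ large, and nothing prevents the column deviations $|A_{x'}|-\d|E_2|$ from being maximally negative on $F_1$; in the worst case that term is of order $-\d^2|F_1||E_2|$, which swamps your positive term $c\,\a|F_1||E_2|$ because $\a\le\d^4/8\ll\d^2$. (There is a second issue with the same flavour: with the threshold $|m|\le c_1\sqrt{\a}\,|E_2|$ that you use, on the Paley set one only has $g(x_0,x') > -c_1\sqrt{\a}\,|E_2| + c_2\a|E_2|$, which can be negative.) To rescue this line you would first have to dispose of the case where $\sum_{x}\bigl| |A_x|-\d|E_2|\bigr|^2$ is large, and only then run the Paley step; but this preprocessing is exactly what the paper does as its opening move, and you do not carry it out.

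The paper avoids all of these error terms by a different construction. After reducing to the case where row and column $L^2$ deviations are small, it proves $\|A\|^4\ge(\d^4+\a/2)|E_1|^2|E_2|^2$, interprets $\|A\|^4=\sum_{(x,y)\in A}e(x,y)$ with $e(x,y)=|A\cap(N_y\m N_x)|$, and then picks a single pair $(x_0,y_0)\in A$ (with both coordinates typical) for which $e(x_0,y_0)\ge(\d^3+\a/(4\d))|E_1||E_2|$. Setting $F_1=N_{y_0}$ and $F_2=N_{x_0}$ gives $|A\cap(F_1\m F_2)|=e(x_0,y_0)$ \emph{exactly}, with no column-deviation residue to control; typicality of $x_0,y_0$ pins down $|F_1|\approx\d|E_1|$, $|F_2|\approx\d|E_2|$, and the increment $\a/(8\d^3)$ follows. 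The point you are missing is that choosing both $F_1$ and $F_2$ as fibers of $A$ simultaneously eliminates the cross term you cannot bound.
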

\begin{proof*}
    Denote by $f$ the balanced function of $A$.
    Suppose that
    \begin{equation}\label{3.04.12:45}
        \sum_{x} |\sum_y f(x,y) |^2 \le \a \d^{-2} |E_1| |E_2|^2 /16
    \end{equation}
    and
    \begin{equation}\label{3.04.12:45'}
        \sum_{y} |\sum_x f(x,y) |^2 \le \a \d^{-2} |E_1|^2 |E_2| /16 \,.
    \end{equation}
    If (\ref{3.04.12:45}) or (\ref{3.04.12:45'}) is not true then
    we can use Lemma \ref{l:Paley} and find two sets $F_1$, $F_2$
    such that (\ref{conj1+}), (\ref{conj2+}) hold.
    Let us prove that
    \begin{equation}\label{3.04.12:15}
        \| A \|^4 \ge (\d^4 + \a/2) |E_1|^2 |E_2|^2 \,.
    \end{equation}
    By assumption $\| f \|^4 \ge \a |E_1|^2 |E_2|^2$.
    Using the obvious formulas  $A = f + \d (E_1 \m E_2)$ and $\sum_{x,y} f(x,y) = 0$, we get
    \begin{equation}\label{3.04.1}
        \| A \|^4 \ge
                        (\d^4 + \a) |E_1|^2 |E_2|^2
                            +
    \end{equation}
    \begin{equation}\label{3.04.2}
                            +
                                \d \sum_{x,x',y,y'} f(x,y) f(x',y) f(x,y') +
                                \d \sum_{x,x',y,y'} f(x,y) f(x',y) f(x',y') +
    \end{equation}
    \begin{equation}\label{3.04.3}
                            +
                                \d \sum_{x,x',y,y'} f(x,y) f(x,y') f(x',y') +
                                \d \sum_{x,x',y,y'} f(x',y) f(x,y') f(x',y')
                            +
    \end{equation}
    \begin{equation}\label{3.04.4}
                            +
                                \d^2 \sum_{x,x',y,y'} f(x,y) f(x',y') +
                                \d^2 \sum_{x,x',y,y'} f(x',y) f(x,y') +
    \end{equation}
    \begin{equation}\label{3.04.5}
                                +\d^2 \sum_{x,x',y,y'} f(x,y) f(x',y) E_2 (y') +
                                \d^2 \sum_{x,x',y,y'} f(x,y) f(x,y') E_1 (x') +
    \end{equation}
    \begin{equation}\label{3.04.6}
                                +\d^2 \sum_{x,x',y,y'} f(x',y) f(x',y') E_1 (x) +
                                \d^2 \sum_{x,x',y,y'} f(x,y') f(x',y') E_2 (y) \,.
    \end{equation}
    It is easy to see that two summands in (\ref{3.04.4}) equal zero.
    Using (\ref{3.04.12:45}) and (\ref{3.04.12:45'}), we see that the sum of
    four terms in (\ref{3.04.5}) --- (\ref{3.04.6}) does not exceed $\a |E_1|^2 |E_2|^2 / 4$.
    Let us prove that any term in (\ref{3.04.2}) --- (\ref{3.04.3})
    at most $\a / (16 \d)$.
    Without loss of generality it can be assumed that
    the first summand in (\ref{3.04.2}) is greater than $\a / (16 \d)$.
    We have
    $$
        \frac{\a}{16 \d} \le \left( \sum_{x,y} |f(x,y)|^3 \right)^{1/3}
                                \cdot
                                    \left( \sum_{x,y} |\sum_{x'} f(x',y)|^{3/2} \cdot |\sum_{y'} f(x,y')|^{3/2} \right)^{2/3}
    $$
    $$
        \le 2 \d^{1/3} \cdot \left( \sum_{y} |\sum_{x'} f(x',y)|^{3/2} \cdot \sum_x |\sum_{y'} f(x,y')|^{3/2} \right)^{2/3}
    $$
    Thus, we have for example
    $$
        \sum_{y} |\sum_{x'} f(x',y)|^{3/2} \ge \frac{\a^{3/4}}{16 \d} \ge \frac{\a^2}{16 \d^5} \,.
    $$
    Using Lemma \ref{l:Paley} and find two sets $F_1$, $F_2$
    such that (\ref{conj1+}), (\ref{conj2+}) hold.
    So any term in (\ref{3.04.2}) --- (\ref{3.04.3})
    does not exceed $\a / (16 \d)$ and we have proved (\ref{3.04.12:15}).

    Let $e(x,y) = \{ (\o{x}, \o{y}) \in A ~|~ (\o{x}, y) \in A \mbox{ and } (x, \o{y}) \in A \}$
    and
    $N_x = \{ y ~|~ (x,y) \in A \}$, $N_y = \{ x ~|~ (x,y) \in A \}$.
    Clearly,
    \begin{equation}\label{norm_via_e(x,y)}
        \| A \|^4 = \sum_{(x,y) \in A} e(x,y) \,.
    \end{equation}
    Let $\t{X} = \{ x \in E_1 ~:~ | \sum_{y} f(x,y) | \le \a |E_2| / (32 \d^3) \}$
    and $\t{Y} = \{ y \in E_2 ~:~ | \sum_{x} f(x,y) | \le \a |E_1| / (32 \d^3)\}$.
    Let also $X^c = E_1 \setminus \t{X}$ and $Y^c = E_2 \setminus \t{Y}$.
    Note that $|X^c| \le \zeta |E_1|$, $|Y^c| \le \zeta |E_2|$, where $\zeta = \a/ (128 \d^2)$.
    Indeed, if $|X^c| > \zeta |E_1|$ then $\sum_x |\sum_{y} f(x,y)| > \a^2 |E_1| |E_2| / (2^{12} \d^5)$.
    Using Lemma \ref{l:Paley} and find two sets $F_1$, $F_2$
    such that (\ref{conj1+}), (\ref{conj2+}) hold.

    Let us prove that
    \begin{equation}\label{vtoraya_stadiya}
        \sum_{x\in \t{X}, y \in \t{Y}} A(x,y)  e(x,y) \ge (\d^4 + \a/4) |E_1|^2 |E_2|^2 \,.
    \end{equation}
    We have
    $$
        \| A \|^4 =
            \sum_{x\in \t{X}, y \in \t{Y}, x',y'} A(x,y) A(x',y) A(x,y') A(x',y')
                +
    $$
    $$
                +
            \sum_{x\in \t{X}, y \in Y^c, x',y'} A(x,y) A(x',y) A(x,y') A(x',y')
                +
    $$
    $$
                +
            \sum_{x\in X^c, y \in \t{Y}, x',y'} A(x,y) A(x',y) A(x,y') A(x',y')
                +
    $$
    $$
                +
            \sum_{x\in X^c, y \in Y^c, x',y'} A(x,y) A(x',y) A(x,y') A(x',y')
            = \sigma_0 + \sigma_1 + \sigma_2 + \sigma_3 \,.
    $$
    Clearly, $\sigma_3 \le |X^c| |Y^c| \cdot \d |E_1| |E_2| \le \a |E_1|^2 |E_2|^2 /16$.
    Further,
    $$
        \sigma_1
            \le
                |Y^c| \sum_{x,x',y'} A(x,y') A(x',y')
                    =
                        |Y^c| \sum_{y'\in \t{Y}} |\sum_x A(x,y') |^2
                            +
                        |Y^c| \sum_{y'\in Y^c} |\sum_x A(x,y') |^2
    $$
    $$
                            \le
                                4 \d^2 |Y^c| |E_1|^2 |E_2|
                                    +
                                        |Y^c|^2 |E_1|^2
                                            \le \a |E_1|^2 |E_2|^2 /16 \,.
    $$
    In the same way $\sigma_2 \le \a |E_1|^2 |E_2|^2 /16$.
    Using (\ref{3.04.12:15}) and (\ref{norm_via_e(x,y)}), we get (\ref{vtoraya_stadiya}).

    By (\ref{vtoraya_stadiya}), we find
    $(x_0,y_0) \in A \cap (\t{X} \m \t{Y})$ such that
    \begin{equation}\label{3.04.14:33}
        e(x_0,y_0) \ge (\d^3 + \frac{\a}{4\d}) |E_1| |E_2| \,.
    \end{equation}
    Put $F_1 = N_{y_0}$, $F_2 = ТN_{x_0}$.
    By definition of $\t{X}$, $\t{Y}$, we get
    $\big| |N_x| - \d |E_2| \big| \le \a |E_2| / (32 \d^3)$
    and
    $\big| |N_y| - \d |E_1| \big| \le \a |E_1| / (32 \d^3)$.
    In particular $|F_1|, |F_2| \ge \d/2$ and (\ref{conj1+}) holds.
    Obviously, $e(x,y) = | (N_y \m N_x) \cap A |$.
    Using (\ref{3.04.14:33}) and $\a \le \d^4/8$, we obtain
    $$
        |A \bigcap (F_1 \m F_2)| \ge (\d + \frac{\a}{4\d^3}) \left( 1+\frac{\a}{32\d^4} \right)^{-2} |F_1| |F_2| \ge
    $$
    $$
        \ge (\d + \frac{\a}{4\d^3}) (1-\frac{\a}{16\d^4}) |F_1| |F_2| \ge ( \d + \frac{\a}{8 \d^3} ) |F_1| |F_2| \,.
    $$
    and we get (\ref{conj2+}).
    This concludes the proof.
\end{proof*}

Let $\L_1$, $\L_2$ be Bohr sets, $\L_1 \le \L_2$, $\L_1 =
\L(S,\eps_0)$, $|S| = d$, and $E_1 \subseteq \L_1$, $E_2 \subseteq
\L_2$, $|E_1| = \beta_1 |\L_1|$, $|E_2| = \beta_2 |E_2|$. Let
$\mathcal{P}$ be a product set $E_1 \m E_2$.

\begin{theorem}
Let $A$ be a subset of $\mathcal{P}$ of cardinality $|A|=\delta
|E_1||E_2|$. Suppose that $A$ has no triples $\{ (k,m), (k+d,m),
(k,m+d) \}$ with $d\neq 0$, $E_1,E_2$ are $(\a_0,2^{-10}
\eps^2)$--uniform, $\a_0 = 2^{-2000} \d^{96} \beta_1^{48}
\beta_2^{48}$, $\eps = (2^{-100} \a_0^2) / (100 d)$, $\eps' =
2^{-10} \eps^2$, and
\[
  \log N \ge 2^{10} d \log \frac{1}{\eps_0 \eps} \,.
\]
Then there exists a Bohr set $\t{\L}$, two sets $F_1$, $F_2$ and a
vector $\v{y} = (y_1, y_2) \in G\m G$, $F_1 \subseteq E_1 \cap
(\t{\L} + y_1) $, $F_2 \subseteq E_2 \cap (\t{\L} + y_2) $ such
that
\begin{equation}\label{est::delta}
  \quad |F_1| \ge 2^{-500} \d^{22} \beta_1 |\t{\L}|,  \quad
        |F_2| \ge 2^{-500} \d^{22} \beta_2 |\t{\L}| \,\, \mbox{ and }
\end{equation}
\begin{equation}\label{est::card}
  \quad  \d_{F_1 \m F_2} (A) \ge \d + 2^{-500} \d^{22} \,.
\end{equation}
  Besides that for
  $\t{\L} = \L (\t{S},\t{\eps})$
  we have
  $\t{S} = S$ and $\t{\eps} \ge 2^{-5} \eps' \eps_0 $.
\label{t:Phase1}
\end{theorem}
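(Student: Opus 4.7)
The plan is a dichotomy on the rectilinear $(\alpha, \alpha_1, \eps)$--uniformity of $A$ with the constants $\a = 2^{-100} \d^9$ and $\a_1 = 2^{-7}$ dictated by Theorem \ref{a_case}. Fix $\L' = \L(S, \eps')$, an $\eps'$--attendant of $\L_1$, and let $\L'_\eps$ be an $\eps$--attendant of $\L'$; these are the auxiliary Bohr sets entering the definition of rectilinear $(\a, \a_1, \eps)$--uniformity.

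If $A$ is rectilinearly $(\a, \a_1, \eps)$--uniform then I would apply Theorem \ref{a_case} directly: our hypotheses on $E_1, E_2$, on the parameters $\a, \a_0, \a_1, \eps$, and on $\log N$ match those of that theorem. Since $A$ contains no corner, the first alternative of Theorem \ref{a_case} is excluded, and we recover sets $F_1, F_2$ inside a translate of a $2^{-4}\eps^2$--attendant $\t{\L}$ of $\L_1$ satisfying $|F_i| \geq 2^{-20} \beta_i |\t{\L}|$ and $\d_{F_1 \m F_2}(A) \geq 3\d/2$. This is strictly stronger than (\ref{est::delta}), (\ref{est::card}), and $\t{\L}$ has generator $S$ with the required radius bound.

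If $A$ is \emph{not} rectilinearly $(\a, \a_1, \eps)$--uniform, then by definition the set $B \subseteq \L_1$ of $l$ with $\|f_l\|^4_{\L' \m \L_2, \eps} > \a \beta_1^2 \beta_2^2 |\L'_\eps|^4 |\L'|^2 |\L_2|$ has size at least $\a_1 |\L_1|$. Unfolding the definition of $\|\cdot\|_{\L' \m \L_2, \eps}$ via pigeonhole, for each such $l$ I would extract a pair $(i,j)$ with $i \in \L' + l$, $j \in \L_2$, for which the restriction of $A$ to $(\L'_\eps + i) \m (\L'_\eps + j)$ has standard rectilinear norm of order at least $\a^{1/4} \beta_1^{1/2} \beta_2^{1/2} |\L'_\eps|^2$. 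Since $E_1, E_2$ are $(\a_0, 2^{-10}\eps^2)$--uniform with $\a_0 \ll \a$, Lemma \ref{l:intermediate} guarantees that for the typical $(i,j)$ the relative densities of $E_1$ on $\L'_\eps + i$ and $E_2$ on $\L'_\eps + j$ are within $O(\a_0^{1/4})$ of $\beta_1, \beta_2$. Applying Proposition \ref{na_case_pr+} to the restriction of $A$ to this rectangle then produces subsets with $|F_i| \gtrsim \a^2 \d^{-5} \beta_i |\L'_\eps|$ and a relative density increment of order $\a^2 \d^{-5} \sim \d^{13}$ over the \emph{local} density.

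The hard part will be converting this local density increment into a \emph{global} one over $\d$: a priori the local density of $A$ on $(\L'_\eps + i) \m (\L'_\eps + j)$ can be smaller than $\d$. This is where Lemma \ref{l:easy_case} enters: applied with $C = \mathcal{P}$ and $\eta$ chosen as a small multiple of the target increment, it shows that either few $s \in \L_1$ have local density on $(\L'_\eps + s) \m \L_2$ dropping below $\d - \eta$ (so the previous paragraph immediately yields a global increment on one of the many good rectangles), or else the contribution from the $s$ where the density is high furnishes, by averaging and pigeonhole, a single translate carrying a surplus. Tracking the combined losses from the $\a_1$ pigeonhole on $B$, the $\a^2/\d^5$ factor from Proposition \ref{na_case_pr+}, Lemma \ref{l:easy_case}, and the passage $\L' \to \L'_\eps$ (controlled by the $\k$ in Lemma \ref{l:L_pm}) yields the final additive increment $2^{-500} \d^{22}$ on $F_1 \m F_2$ inside $\t{\L} = \L'_\eps$, a Bohr set with the same generator $S$ and radius at least $2^{-5} \eps' \eps_0$, as required.
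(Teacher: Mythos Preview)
Your dichotomy is correct and the uniform case is handled exactly as in the paper. In the non-uniform case you also correctly isolate the real difficulty: Proposition~\ref{na_case_pr+} gives an increment over the \emph{local} density $\d_1$ on the small rectangle, and one must ensure $\d_1$ is close to $\d$.

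However, your proposed mechanism for this step is not enough. Lemma~\ref{l:easy_case} (applied with $\L'$ replaced by $\L'_\eps$) only controls \emph{slab} densities, i.e.\ the density of $A$ on $(\L'_\eps + s) \m \L_2$. What you actually need before invoking Proposition~\ref{na_case_pr+} is control of the density on a small \emph{square} $(\L'_\eps + j - k) \m (\L'_\eps + i + k)$, and there is no reason a single application of Lemma~\ref{l:easy_case} should give this: for a fixed $s$ with good slab density, the density on the various $\L'_\eps$-fibres in the second coordinate can still oscillate badly. Moreover, the ``good rectangles'' you want (those with large local rectilinear norm) are indexed by triples $(i,j,k)$ coming from the expansion of $\|f_l\|^4_{\L' \m \L_2,\eps}$, not by single translates $s$, so the pigeonhole you sketch does not directly interact with the set $B$.

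The paper closes this gap with a genuine two-scale argument. First Lemma~\ref{l:easy_case} is applied at the $\L'$ scale to locate a single $l_0 \in B \setminus (B_1 \cup B_2)$ for which the slab density on $(\L'+l_0) \m \L_2$ lies within $\eta = 2^{-100}\a^{3/2}$ of $\d$; this gives the one-line bound $|\sum_{r,m} f(r,m)\,\l_{l_0}(r)\,\L_2(m)| < \eta |\l_{l_0}\cap E_1||E_2|$. Second, and this is the step you are missing, an averaging identity (the computation of $\sigma^*$) propagates this bound down to an average over triples $(i,j,k)$ at the $\L'' = \L'_\eps$ scale. A further dichotomy on the set $J$ of triples where the local rectilinear norm is large then yields a specific $(i,j,k) \in J$ at which the local balanced sum is bounded by $2^{15}\eta/\a$ in absolute value; only now is $|\d_1 - \d| \le 2^{20}\eta/\a$ small enough that the increment from Proposition~\ref{na_case_pr+} survives as an increment over $\d$. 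Without this propagation step your argument does not go through.
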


\begin{proof*}
Let $\L'$ be an $\eps$--attendant of  $\L_1$, and $\L''$ be an
$\eps$--attendant of $\L'$ to be chosen later. Suppose that  $A$
is rectilinearly $(\a,\a_1,\eps)$--uniform, $\a = 2^{-100}
\d^{9}$, $\a_1 = 2^{-7}$. Using Theorem \ref{a_case}, we obtain
that either $A$ contains a triple $\{ (k,m), (k+d,m), (k,m+d) \}$
with $d\neq 0$ or (\ref{est::delta}), (\ref{est::card}) hold. At
the first case we get a contradiction, at the second case we
obtain the required result. Hence the set $A$ is not rectilinearly
$(\a,\a_1,\eps)$--uniform.

Let
\[
  B_1 = \{ s \in \L_1 ~|~ | \d_{\L'+s} (E_1) - \beta_1 | \ge 4\a_0^{1/2} \} \,,
\]
\[
  B_2 = \{ s \in \L_1 ~|~ \L' \cap (E_1 -s )\mbox {  is not } (8\a_0^{1/4},\eps) \mbox{--uniform}  \} \,,
\]
and
\[
  B = \{ i \in \L_1 ~|~ \| f_i \|^4_{\L' \m \L_2,\eps}
                            > \a \beta_1^2 \beta_2^2 |\L'_\eps|^4 |\L'|^2 |\L_2| \} \,.
\]
Since  $A$ is not rectilinearly $(\a,\a_1,\eps')$--uniform, it
follows that $|B| > \a_1 |\L_1|$. By assumption  $E_1$, $E_2$ are
$(\a_0,\eps')$--uniform. Using Lemma \ref{l:intermediate}, we
obtain $|B_1| \le 4\a_0^{1/2} |\L_1|$, $|B_2| \le 8\a_0^{1/2}
|\L_1|$. Let $B_3 = B_1 \cup B_2$. Then $|B_3| \le 12 \a_0^{1/2}
|\L_1|$. Let $B' = B \setminus B_3$. Since  $32 \a_0^{1/2} <
\a_1$, it follows that $|B'| \ge \a_1 |\L_1| /2$. Note that for
all $l\in B'$ we have
\begin{equation}\label{tmp_21:02}
  | \d_{\L'+s} (E_1) - \beta_1 | <  4\a_0^{1/2} \,.
\end{equation}

Let $\eta = 2^{-100} \a^{3/2}$. Let $\l_l = \L' + l$, $l\in \L_1$.
Suppose that for any $l \in B'$ we have
\begin{equation}\label{}
  | A \cap (\l_l \m \L_2) | \le (\d - \eta) |\l_l \cap E_1| |\L_2 \cap E_2| \,.
\end{equation}
Let $B'^{c} = \L_1 \setminus B'$. Using Lemma \ref{l:easy_case}
and  (\ref{tmp_21:02}), we get
\[
  \sum_{l \in B'^{c}} | A\cap (\l_l \m \L_2)| \ge
                                                \d |\L_2 \cap E_2| \sum_{l\in B'^{c}} |\l_l \cap E_1|
                                                +
                                                \eta |\L_2 \cap E_2| \sum_{l\in B'} |\l_l \cap E_1|
                                                - \a_0^2 |\L'| |\L_1| |\L_2|
\]
\[
                                                  \ge
                                                \d \beta_2 |\L_2| \sum_{l\in B'^{c}} |\l_l \cap E_1|
                                                +
                                                \eta \frac{\a_1 |\L_1|}{2} \frac{\beta_1 |\L'|}{4} \beta_2 |\L_2|
                                                   \ge
\]
\begin{equation}\label{tmp_21:14}
                                                   \ge
                                                \d \beta_2 |\L_2| \sum_{l\in B'^{c}} |\l_l \cap E_1|
                                                +
                                                2^{-3} \a_1 \eta \beta_1 \beta_2 |\L'| |\L_1| |\L_2|  \,.
\end{equation}
We have
\begin{equation}\label{tmp_21:39}
  \sum_{l \in B_1} | A\cap (\l_l \m \L_2)| \le 4 \a_0^{1/2} |\L_1| |\L'| |\L_2|
  \le 2^{-4} \a_1 \eta \beta_1 \beta_2 |\L'| |\L_1| |\L_2|  \,.
\end{equation}
Combining (\ref{tmp_21:14}) and (\ref{tmp_21:39}), we obtain
\begin{equation}\label{}
  \sum_{l \in (B'^{c} \setminus B_1)} | A\cap (\l_l \m \L_2)|
    \ge
        \d \beta_1 |\L_2| \sum_{l\in B'^{c}} |\l_l \cap E_1|
    +
        2^{-4} \a_1 \eta \beta_1 \beta_2 |\L'| |\L_1| |\L_2|  \,.
\end{equation}
This implies that, there exists a number  $l \in B'^{c} \setminus
B_1$ such that
\begin{equation}\label{tmp:eq_20:24}
  |A \cap (\l_l \m \L_2) | > (\d + 2^{-5} \a_1 \eta) |\l_l \cap E_1| |\L_2 \cap E_2| \,.
\end{equation}
Put $\tilde{\L} = \L'$, $y_1 = l_0$ and $F_1 = (\tilde{\L} + l_0)
\cap E_1$. Since  $l_0 \notin B_1$, it follows that $|F_1| \ge
\beta_1 |\tilde{\L}| /2$. The set $E_2$ is $(\a_0, 2^{-10}
\eps^2)$--uniform. This yields  that there exists a number $a$
such that $F_2 = (\tilde{\L} + a) \cap E_2$ has the cardinality at
least $\beta_2 |\tilde{\L}_1| /2$ and for $\v{y} = (l_0,a)$ we
have
\[
  |A \cap (\t{\L} + \v{y} ) | > (\d + 2^{-6} \a_1 \eta) |F_1| |F_2| \,.
\]
and the theorem is proven.

Let $f(\v{x})$ be the balanced function of $A$. There exists $l_0
\in B'$ such that
\[
  | A \cap (\l_{l_0} \m \L_2) | > (\d - \eta) |\l_{l_0} \cap E_1| |\L_2 \cap E_2| \,.
\]
If
\begin{equation}\label{tmp:20:25}
  | A \cap (\l_{l_0} \m \L_2) | \ge (\d + \eta) |\l_{l_0} \cap E_1| |\L_2 \cap E_2| \,,
\end{equation}
then the theorem is proven.

Hence there exists  $l_0 \in B'$ such that
\begin{equation}\label{e:<eta}
  | \sum_{r,m} f(r,m) \l_{l_0} (r) \L_2 (m) | < \eta |\l_{l_0} \cap E_1| |\L_2 \cap E_2| \,.
\end{equation}

Let $\L_0 = \L'+l_0$. Put $\nu_i = \L'' + i$, $i\in \L_0$ and
$\mu_j = \L'' + j$, $j\in \L_2$. Consider the sum
\begin{equation}\label{tmp_16:19}
  \sigma^* = \sum_{i\in \L_0} \sum_{j\in \L_2} \sum_k \sum_{m} \sum_{r\in \L_0} f(r,m) \nu_i (m-k) \mu_j (k+r) \,.
\end{equation}
Suppose that $i$ and $j$ are fixed in the sum (\ref{tmp_16:19}).
Using Lemma \ref{l:L_pm}, we obtain that $k$ runs a set of
cardinality at most $2|\L_0|$. Besides that if $i,j,k$ are fixed,
then $m$, $r$ run sets of size at most $|\L''|$. Using Lemma
\ref{l:L_pm} once again, we obtain
\begin{equation}\label{tmp_18:23}
   \sigma^* = |\L''|^2 \sum_k \sum_{m} \sum_{r\in \L_0} f(r,m) \L_0(m-k) \L_2(k+r) + \vartheta \a_0^2 |\L''|^2 |\L_0|^2 |\L_2| \,,
\end{equation}
where $|\vartheta| \le 1$. Let $\L_3 = \L_2 - \L' - l_0$. Using
Lemma \ref{l:L_pm}, we get $|\L_2| \le |\L_3| \le (1+\a_0^2)
|\L_2|$.
Note that $k$ belongs to  the set  $\L_3$  in (\ref{tmp_18:23}).                                                
If $k\in \L_2^{-} - l_0$, then $\L_2(k+r) = 1$, for all $r\in
\L_0$. If $k$ is fixed in (\ref{tmp_18:23}), then $r$ and $m$ run
sets of cardinality at most $|\L_0|$. It follows that
\[
  \frac{\sigma^*} {|\L''|^2}
            = \sum_{k\in (\L_2^{-} - l_0)} \sum_{m} \sum_{r\in \L_0} f(r,m) \L_0(m-k) +
\]
\[
            +
             \sum_{k\in (\L_3 \setminus (\L_2^{-} - l_0)) } \sum_{m} \sum_{r\in \L_0} f(r,m) \L_0(m-k) \L_2(k+r)
           =
\]
\[
           =
             \sum_{k\in (\L_2^{-} - l_0)} \sum_{m} \sum_{r\in \L_0} f(r,m) \L_0(m-k) + \a_0^2 \vartheta_1 |\L_0|^2 |\L_2|
           =
\]
$$
             \sum_k \sum_{m} \sum_{r\in \L_0} f(r,m) \L_0(m-k) + 2 \a_0^2 \vartheta_2 |\L_0|^2 |\L_2|
           =
$$
$$
           =
             |\L_0| \sum_{m} \sum_{r\in \L_0} f(r,m) + 2 \a_0^2 \vartheta_2 |\L_0|^2 |\L_2| \,,
$$
where $|\vartheta_1|, |\vartheta_2| \le 1$. Using (\ref{e:<eta}),
we get
\begin{equation}\label{tmp_22:47}
  |\sigma^*| < \eta |\L''|^2 |\L_0| |\L_0 \cap E_1| |\L_2 \cap E_2| + 4 \a_0^2 |\L''|^2 |\L_0|^2 |\L_2|
\end{equation}
If $j$ is fixed, then $k$  runs a set $-\L_0 + j + \L''$ in
(\ref{tmp_16:19}). Clearly, the cardinality of this set does not
exceed $(1 + \a_0^2) |\L'|$.
Hence, replacing $4 \a_0^2 |\L''|^2 |\L_0|^2 |\L_2|$ in (\ref{tmp_22:47})                                               
by $8 \a_0^2 |\L''|^2 |\L_0|^2 |\L_2|$, we can assume that $k$
runs $-\L_0 + j$ in (\ref{tmp_16:19}).

Since  $l\in B'$, it follows that $\beta_1 |\L_0| /2 \le |\L_0
\cap E_1| \le 2 \beta_1 |\L_0|$. Besides that $16 \a_0^2 < \eta
\beta_1 \beta_2$. This implies that
\[
  | \sum_{i\in \L_0} \sum_{j\in \L_2} \sum_{k\in -\L_0 + j} \sum_{m} \sum_{r\in \L_0} f(r,m) \nu_i (m-k) \mu_j (k+r) |
    <
\]
\begin{equation}\label{e:<eta2}
        <
            2 \eta |\L''|^2 |\L_0| \cdot |\L_0 \cap E_1| \cdot |\L_2 \cap E_2|
        \le 4 \eta \beta_1 \beta_2 |\L''|^2 |\L_0|^2 |\L_2| \,.
\end{equation}
\\

Let
\[
  \Omega = \{ j \in \L_2 ~|~
              \frac{1}{|\L'|} \sum_{k\in \L'+ j} | \d_{\L''+k} (E_2) - \beta_2 |^2 \ge 4 \a_0^{1/2}  \},
              \mbox{ and }
              G = \L_2 \setminus \Omega \,.
\]
Since  $E_2$ is $(\a_0,\eps')$--uniform, it follows that $|\Omega|
\le 8 \a_0^{1/2} |\L_2|$. Let $i\in \L_0$ be fixed. Let
\[
  \Omega (i)  = \{ j \in \L_2 ~|~
              \frac{1}{|\L'|} \sum_{k\in -\L_0+ j} | \d_{\L''+i+k} (E_2) - \beta_2 |^2 \ge 4 \a_0^{1/2}  \},
              \mbox{ and }
              G (i) = \L_2 \setminus \Omega(i) \,.
\]
Since
\[
    \sum_{k\in -\L_0+ j} | \d_{\L''+i+k} (E_2) - \beta_2 |^2 = \sum_{k\in \L'+ j + (i - l_0) } | \d_{\L''+k} (E_2) - \beta_2 |^2 \,,
\]
it follows that $\L_2 \cap ( G + l_0 -i) \subseteq G(i)$. Hence, $
 |\Omega(i)| \le |\L_2| - | \L_2 \cap ( G + l_0 - i) |
$. Since  $i$ belongs to  $\L_0$, this implies that a number
$a=l_0 - i$ belongs to  $\L'$. Using Lemma \ref{l:L_pm} for $\L_2$
and its $\eps$--attendant $\L'$, we get $(G \cap \L_2^{-}) + a
\subseteq \L_2$ and
\[
  |\L_2 \cap (G + a)| \ge |\L_2 \cap ( (G\cap \L_2^{-}) + a)| \ge | (G\cap \L_2^{-}) + a| =  | G\cap \L_2^{-} |
  \ge |G| - 8 \a_0^2 |\L_2| \,.
\]
Hence $|\Omega(i)| \le 8 \a_0^{1/2} |\L_2|$.

Since  $l_0 \in B'$, it follows that
 \begin{equation}\label{tmp:21:20'}
  \frac{1}{|\L'|} \sum_{k\in \L'} | \d_{\L''+k} (E_1 - l_0 \cap \L') - \beta_1 |^2 \le 2^6 \a_0^{1/2}
\end{equation}
It is clear that for any $j$ the sum (\ref{tmp:21:20'}) equals
\[
  \frac{1}{|\L'|} \sum_{k \in -\L_0 + j} |\d_{\L'' + j -k} (E_1 \cap \L_0) - \beta_1 |^2 \,.
\]
Indeed
\[
  \sum_{k \in -\L_0 + j} |\d_{\L'' + j -k} (E_1 \cap \L_0) - \beta_1 |^2
  =
  \sum_{k \in \L' + l_0} |\d_{\L'' + k} (E_1 \cap \L' + l_0) - \beta_1 |^2
  =
\]
\[
  =
  \sum_{k \in \L'} |\d_{\L'' + k} (E_1 - l_0 \cap \L') - \beta_1 |^2
\]

Let
\[
  \Omega_{1} (i,j) = \{ k \in -\L_0 + j ~:~ | \d_{\L''+i+k} (E_2) - \beta_2 | \ge 4 \a_0^{1/8} \} \,,
\]
\[
  \Omega_{2} (i,j) = \{ k \in -\L_0 + j ~:~ |\d_{\L'' + j -k} (E_1 \cap \L_0) - \beta_1 | \ge 4 \a_0^{1/8} \},
  \mbox{ and }
\]
\[
  \Omega_{3} (i,j) = \Omega_1 (i,j) \cup \Omega_2 (i,j) \,.
\]
For all $j\notin \Omega(i)$ we have $|\Omega_{1} (i,j)| \le 2
\a_0^{1/4} |\L'|$. The inequality (\ref{tmp:21:20'}) implies that
$|\Omega_2 (i,j)| \le 4 \a_0^{1/4} |\L'|$. Hence $|\Omega_3 (i,j)
| \le 8 \a_0^{1/4} |\L'|$ if $j\notin \Omega(i)$.
\\

Since  $l_0 \in B'$, it follows that
\[
  \sigma = \sum_{i\in \L_0} \sum_{j\in \L_2} \sum_k \sum_{m,u} \nu_i (m-k) \nu_i (u-k)
             \Big|\sum_r \mu_j (k+r) \t{f}_{l_0} (r,m) \t{f}_{l_0} (r,u) \Big|^2
             \ge
\]
\begin{equation}\label{e:nonunif}
             \ge
                \a \beta_1^2 \beta_2^2 |\L''|^4 |\L_0|^2 |\L_2| \,,
\end{equation}
where $\t{f}_{l_0}$ is a restriction of $f$ to $\l_{l_0} \m \L_2$.
If $j$ is fixed, then  $k$ runs $-\L_0 + j + \L''$  in
(\ref{e:nonunif}). Clearly, the cardinality of this set does not
exceed $(1 + \a_0^2) |\L'|$. Hence, replacing  $\a$ by $\a/2$ in
(\ref{e:nonunif}), we can assume that $k$ runs $-\L_0 + j$ in
(\ref{e:nonunif}). Using $|\Omega (i)| \le 8 \a_0^{1/2} |\L_2|$,
we get
\[
  \sigma = \sum_{i\in \L_0} \sum_{j\notin \Omega (i)} \sum_k \sum_{m,u} \nu_i (m-k) \nu_i (u-k)
             \Big|\sum_r \mu_j (k+r) \t{f}_{l_0} (r,m) \t{f}_{l_0} (r,u) \Big|^2
             \ge
\]
\begin{equation}\label{tmp_18:05}
             \ge
                \frac{\a}{4} \beta_1^2 \beta_2^2 |\L''|^4  |\L_0|^2 |\L_2| \,.
\end{equation}
\\                                                                                                

Now we can prove the theorem.

Let
\[
  J = \{
         (i,j,k) ~|~ i\in \L_0,\, j\notin \Omega(i),\, k \notin \Omega_3(i,j)
          \mbox{ such that  }
\]
\[
                                  \sum_{m,u} \nu_i (m-k) \nu_i (u-k)
                                  \Big|\sum_r \mu_j (k+r) \t{f}_{l_0} (r,m) \t{f}_{l_0} (r,u) \Big|^2
                                  \ge
                                  \frac{\a}{64} \beta_1^2 \beta_2^2 |\L''|^4
      \} \,.
\]
Using (\ref{tmp_18:05}), we get
\[
  \sum_{i \in \L_0} \sum_{j \notin \Omega(i)} \sum_{k\notin \Omega_3(i,j)} \sum_{m,u} \nu_i (m-k) \nu_i (u-k)
             \Big|\sum_r \mu_j (k+r) \t{f}_{l_0} (r,m) \t{f}_{l_0} (r,u) \Big|^2
             \ge
\]
\begin{equation}\label{tmp_15:05}
             \ge
                \frac{\a}{8} \beta_1^2 \beta_2^2 |\L''|^4  |\L_0|^2 |\L_2| \,.
\end{equation}

It follows that
\[
  \sum_{(i,j,k) \in J} \sum_{m,u} \nu_i (m-k) \nu_i (u-k)
             \Big|\sum_r \mu_j (k+r) \t{f}_{l_0} (r,m) \t{f}_{l_0} (r,u) \Big|^2
             \ge
\]
\begin{equation}\label{tmp_15:06}
             \ge
                \frac{\a}{16} \beta_1^2 \beta_2^2 |\L''|^4  |\L_0|^2 |\L_2| \,.
\end{equation}
Let us estimate the cardinality of $J$. For any triple $(i,j,k)$
belongs to $J$ we have $|E_2 \cap (\nu_i + k) | - \beta_2 |\L''| |
\le 4\a_0^{1/8} |\L''|$ and $|(E_1 \cap \L_0) \cap (\mu_j - k) | -
\beta_1 |\L''| | \le 4\a_0^{1/8} |\L''|$. Using (\ref{tmp_15:06}),
we get
\begin{equation}\label{}
  32 |J| \cdot |\L''|^4 \beta_1^2 \beta_2^2                                        
    \ge
  \frac{\a}{16} \beta_1^2 \beta_2^2 |\L''|^4  |\L_0|^2 |\L_2| \,.
\end{equation}
This yields that $|J| \ge 2^{-12} \a |\L_0|^2 |\L_2|$.

Let us assume that for all $(i,j,k) \in J$ we have
\begin{equation}\label{tmp_14:59.0}
  \sum_{m} \sum_{r\in \L_0} f(r,m) \nu_i (m-k) \mu_j (k+r) < - 2^{15} \frac{\eta}{\a} \beta_1 \beta_2 |\L''|^2 \,.
\end{equation}
Using (\ref{e:<eta2}), we get
\begin{equation}\label{}
  \sum_{(i,j,k) \in \o{J} } \sum_{m} \sum_{r\in \L_0} f(r,m) \nu_i (m-k) \mu_j (k+r) \ge
                                                    4 \eta \beta_1 \beta_2 |\L''|^2 |\L_0|^2 |\L_2| \,,
\end{equation}
where $\o{J} = \{ (i,j,k) ~:~ (i,j,k) \in (\L_0 \m \L_2 \m (-\L_0
+ j) ) \setminus J \}$. Since  $|\Omega(i)| \le 8 \a_0^{1/2}
|\L_2|$, $i\in \L_0$, it follows that
\begin{equation}\label{}
  \sum_{(i,j,k) \in \o{J}, j \notin \Omega(i)}
  \sum_{m} \sum_{r\in \L_0} f(r,m) \nu_i (m-k) \mu_j (k+r) \ge 2 \eta \beta_1 \beta_2 |\L''|^2 |\L_0|^2 |\L_2| \,.
\end{equation}
Hence, there exist $i$ and $j$, $j \notin \Omega (i)$ such that
\begin{equation}\label{}
  \sum_{k \in Q(i,j)} \sum_{m} \sum_{r\in \L_0} f(r,m) \nu_i (m-k) \mu_j (k+r) \ge \frac{\eta}{2} \beta_1 \beta_2  |\L'|^2 |\L_0| \,,
\end{equation}
where $Q(i,j)$ is a  subset of $-\L_0 + j$. Since  $j\notin
\Omega(i)$, it follows that $|\Omega_3(i,j)| \le 8 \a_0^{1/4}
|\L'|$. Hence
\begin{equation}\label{tmp_19:59}
  \sum_{k\in Q(i,j) \setminus \Omega_3(i,j) } \sum_{m} \sum_{r\in \L_0} f(r,m) \nu_i (m-k) \mu_j (k+r)
    \ge
  \frac{\eta}{4} \beta_1 \beta_2  |\L''|^2 |\L_0| \,.
\end{equation}
This implies that there exists $k\notin \Omega_3(i,j)$ such that
\begin{equation}\label{tmp_14:50}
  \sum_{m} \sum_{r\in \L_0} f(r,m) \nu_i (m-k) \mu_j (k+r)
    \ge
  \frac{\eta}{8} \beta_1 \beta_2  |\L''|^2 \,.
\end{equation}
Put $\tilde{\L} = \L''$,  $\v{y} = (j-k,k+i)$ and $F_1 =
(\tilde{\L} + y_1) \cap (E_1 \cap \L_0)$, $F_2 = (\tilde{\L} +
y_2) \cap E_2$. Since  $k\notin \Omega_3 (i,j)$, it follows that
$\beta_1 |\L''| /2 \le |F_1| \le 2 \beta_1 |\L''|$, $\beta_2
|\L''| /2 \le |F_2| \le 2 \beta_2 |\L''|$. Using this and
(\ref{tmp_14:50}), we get
\[
  |A \cap (F_1 \m F_2)| = |A \cap ( ( (\mu_j - k) \cap \L_0) \m ((\nu_i + k) \cap \L_2 )  ) |
        \ge
\]
\[
        \ge
            \d |(\mu_j - k) \cap E_1 \cap \L_0| |(\nu_i + k) \cap E_2| + \frac{\eta}{8} \beta_1 \beta_2  |\L''|^2
        \ge
\]
\[
        \ge
            ( \d
            +
            \frac{\eta}{32} ) |F_1| |F_2| \,.
\]
Hence, if for all $(i,j,k) \in J$ we have (\ref{tmp_14:59.0}),
then the theorem is proven.

Now assume that there exists a triple $(i,j,k ) \in J$ such that
\begin{equation}\label{}
  \sum_{m} \sum_{r\in \L_0} f(r,m) \nu_i (m-k) \mu_j (k+r) \ge -2^{15} \frac{\eta}{\a} \beta_1 \beta_2 |\L''|^2 \,.
\end{equation}
We can assume that for all  $(i,j,k) \in J$ we have
\begin{equation}\label{est:den_s}
  |\sum_{m} \sum_{r\in \L_0} f(r,m) \nu_i (m-k) \mu_j (k+r)| \le 2^{15} \frac{\eta}{\a} \beta_1 \beta_2 |\L''|^2 \,.
\end{equation}
Indeed, if
\[
  \sum_{m} \sum_{r\in \L_0} f(r,m) \nu_i (m-k) \mu_j (k+r) > 2^{15} \frac{\eta}{\a} \beta_1 \beta_2 |\L''|^2 \,,
\]
then we might apply the same reasoning as above. For sets
$\tilde{\L}_1 = \L''$, $\tilde{\L}_2 = \L''$, a vector $\v{y} =
(j-k,k+i)$ and $F_1 = (\tilde{\L}_1 + y_1) \cap (E_1 \cap \L_0)$,
$F_2 = (\tilde{\L}_2 + y_2) \cap E_2$ we have $|F_1| \ge \beta_1
|\tilde{\L}_1| /2$, $|F_2| \ge \beta_2 |\tilde{\L}_2| /2$
 and
\[
|A \cap (F_1 \m F_2) | \ge (\d + 2^{6} \frac{\eta}{\a} ) |F_1|
|F_2|.
\]
\\

Since  $(i,j,k) \in J$, it follows that
\begin{equation}\label{tmp_22:06}
  \sum_{m,u \in \nu_i +k }
                          \Big|\sum_{r \in \mu_j - k} \t{f}_{l_0} (r,m) \t{f}_{l_0} (r,u) \Big|^2
                               \ge
                          2^{-6} \a \beta_1^2 \beta_2^2 |\L''|^4 \,.
\end{equation}
Note that $m,u$ belong to $\nu_i+k \cap \L_2$ in (\ref{tmp_22:06})
and $r$ belongs to  a set $\mu_j - k \cap \L_0$. Put $
\mathcal{L}_1 = \mu_j - k \cap \L_0 $, $ \mathcal{L}_2 = \nu_i + k
\cap \L_2 $, $E_1' = E_1 \cap \mathcal{L}_1$ and  $E_2' = E_2 \cap
\mathcal{L}_2$.
We can assume that $\t{f}_{l_0}$ is zero outside $\mathcal{L}_1 \m
\mathcal{L}_2$ in (\ref{tmp_22:06}). Let $A_1 = A \cap
(\mathcal{L}_1 \m \mathcal{L}_2)$, $\delta_1 = \delta_{E_1' \m
E_2'} (A)$, and $f_1$ be a balanced function of $A_1$. Using
(\ref{est:den_s}), we get $|\d_1 - \d| \le 2^{20} \frac{\eta}{\a}
$. We have $k \notin \Omega_3 (i,j)$. Using this, we obtain
\begin{equation}\label{}
  \| \t{f}_{l_0} - f_1 \|^4 = |E_1'|^2 |E_2'|^2 ( \delta_1 - \delta )^2 \le
  2^{44} \beta_1^2 \beta_2^2 \frac{\eta^2}{\a^2} |\L''|^4 \,.
\end{equation}
Using Lemma \ref{l:norm}, we get
\begin{equation}\label{}
  \sum_{m,u \in \nu_i +k }
                          \Big|\sum_{r \in \mu_j - k} f_1 (r,m) f_1 (r,u) \Big|^2
                               \ge
                          2^{-7} \a \beta_1^2 \beta_2^2 |\L''|^4 \,.
\end{equation}
Since  $k \notin \Omega_3 (i,j)$, it follows that $2^{-1} \beta_1
|\L''| \le |E_1'| \le 2 \beta_1 |\L''|$, $2^{-1} \beta_2 |\L''|
\le |E_2'| \le 2 \beta_2 |\L''|$. Hence
\begin{equation}\label{}
  \sum_{m,u \in \nu_i +k }
                          \Big|\sum_{r \in \mu_j - k} f_1 (r,m) f_1 (r,u) \Big|^2
                               \ge
                          2^{-11} \a |E_1'|^2 |E_2'|^2 \,.
\end{equation}
Using Proposition \ref{na_case_pr+}, we obtain sets $F_1 \subseteq
E_1' \subseteq \mu_j - k$, $F_2 \subseteq E_2' \subseteq \nu_i +
k$ such that
\[
  |A \cap (F_1 \m F_2) | \ge
  |A_1 \cap (F_1 \m F_2) | \ge (\d_1 + 2^{-37} \frac{\a^2}{\d_1^5} ) |F_1| |F_2| \ge
\]
\begin{equation}\label{}
    \ge
    (\d + 2^{-40} \frac{\a^2}{\d^5}) |F_1| |F_2|
         \ge
            (\d + 2^{-240} \d^{13}) |F_1| |F_2| \,.
\end{equation}
and
\[
  |F_i| \ge 2^{-40} \frac{\a^2}{\d^5} |E_i'| \ge 2^{-300} \d^{13} \beta_i |\L''|, \quad i=1,2 \,.
\]
Put $\tilde{\L} = \L''$, $\v{y} = (j-k,k+i)$ and $F_1 =
(\tilde{\L}_1 + y_1) \cap (E_1 \cap \L_0)$, $F_2 = (\tilde{\L}_2 +
y_2) \cap E_2$. The sets $\tilde{\L}$ and $F_1$, $F_2$  satisfy
(\ref{est::delta}), (\ref{est::card}). This concludes the proof.
\end{proof*}


\section{On dense subsets of Bohr sets.}

The following lemmas were proven in \cite{Shkr_tri_LMS}.

\begin{lemma}
  Let $\L$ be a Bohr set, $\L'$ be an $\eps$--attendant of $\L$, $\eps = \k/(100 d)$,
  and $Q$ be a subset of $\L$.
  Let
  $g : 2^{G} \m (G\m G) \to {\bf D}$
  be the function
  such that
  $g (\L,\v{x}) = \d^2_{\L+\v{x}} (Q)$.
  Then
  \begin{equation}\label{}
   \frac{1}{|\L|^2} \sum_{\v{x} \in \L} g(\L', \v{x}) \ge g( \L,0) - 8\k \,.
  \end{equation}
\label{l:l4}
\end{lemma}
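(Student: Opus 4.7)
The plan is to reduce the statement to the one--dimensional analog of Lemma \ref{l:smooth_d} (noted directly after the proof of that lemma) by exploiting the fact that the density of $Q$ in a product box factorizes across the two coordinates. Writing $\v{x}=(x_1,x_2)$ and interpreting $\d^2_{\L'+\v{x}}(Q)$ as the density of $Q\m Q$ in $(\L'+x_1)\m(\L'+x_2)$, I would define $h(x) := \d_{\L'+x}(Q)$ so that $g(\L',\v{x}) = h(x_1)\,h(x_2)$, and likewise $g(\L,0) = \d_\L(Q)^2$.

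With this notation the left-hand side separates:
\[
  \frac{1}{|\L|^2} \sum_{\v{x}\in \L} g(\L',\v{x})
    = \Big(\frac{1}{|\L|}\sum_{x\in \L} h(x)\Big)^2 \,.
\]
Next I would invoke the one--dimensional version of Lemma \ref{l:smooth_d} (with parameter $\eps = \k/(100d)$ and the trivial translation $x=0$), which yields
\[
  \Big| \d_\L(Q) - \frac{1}{|\L|}\sum_{x\in \L} \d_{\L'+x}(Q) \Big| \le 4\k \,,
\]
so in particular $\frac{1}{|\L|}\sum_{x\in \L} h(x) \ge \d_\L(Q) - 4\k$.

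To close the argument I would square this inequality and use $\d_\L(Q)\le 1$ together with $\k\ge 0$:
\[
  \Big(\d_\L(Q)-4\k\Big)^2
    = \d_\L(Q)^2 - 8\k\,\d_\L(Q) + 16\k^2
    \ge \d_\L(Q)^2 - 8\k
    = g(\L,0) - 8\k \,,
\]
which gives the claimed bound. There is no real obstacle: the only non-trivial input is the one--dimensional smoothing estimate, and the rest is a factorization of the double average together with the elementary inequality $(a-b)^2 \ge a^2-2b$ for $a\in[0,1]$.
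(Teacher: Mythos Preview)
Your argument is correct and is the natural one. The paper does not actually prove this lemma here --- it defers to \cite{Shkr_tri_LMS} --- so there is no in-paper proof to compare against.

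One small remark on interpretation: the notation in the statement is loose (it declares $Q\subseteq\L$, a one--dimensional set, yet uses the two--dimensional variable $\v{x}$), and what is actually invoked later (see the proof of Corollary~\ref{cor:together}) is the one--dimensional inequality
\[
  \frac{1}{|\L|}\sum_{x\in\L}\d_{\L'+x}^{2}(Q)\ \ge\ \d_{\L}^{2}(Q)-8\k,
\]
with $\d^{2}$ meaning the square of the density. Under that reading your factoring step is replaced by the trivial bound $\frac{1}{|\L|}\sum_x h(x)^2\ge\bigl(\frac{1}{|\L|}\sum_x h(x)\bigr)^2$, after which your use of (the one--dimensional) Lemma~\ref{l:smooth_d} together with $(a-4\k)^2\ge a^2-8\k$ for $a\in[0,1]$ finishes the proof verbatim. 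Either way the content is the same: smoothing plus a one--line inequality.
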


\begin{lemma}
  Let $\L$ be a Bohr set, $\L'$ be an $\eps$--attendant of $\L$, $\eps = \k/(100 d)$,
  $\a>0$ be a real number,
  and
  $Q$ be a subset of $\L$, $|Q| = \d |\L|$.
  Suppose that
  \begin{equation}\label{e:Big_sq}
    \frac{1}{|\L|^2} \sum_{\v{n} \in \L} | \d_{\L'+\v{n}} (Q) - \d |^2  \ge \a \,.
  \end{equation}
  Then
  \begin{equation}\label{}
    \sum_{\v{n} \in \L} \d^2_{\L'+\v{n}} (Q) \ge \d^2 + \a - 4 \k \,.
  \end{equation}
\label{l:l1}
\end{lemma}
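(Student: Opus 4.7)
The plan is to expand the square in the hypothesis and use the smoothing estimate already proved in Lemma \ref{l:smooth_d} to handle the linear cross-term. Writing out the square gives
\[
 \frac{1}{|\L|^2}\sum_{\v{n}\in\L} |\d_{\L'+\v{n}}(Q)-\d|^2
   = \frac{1}{|\L|^2}\sum_{\v{n}\in\L} \d^2_{\L'+\v{n}}(Q)
     - 2\d\cdot\frac{1}{|\L|^2}\sum_{\v{n}\in\L} \d_{\L'+\v{n}}(Q)
     + \d^2.
\]
By the hypothesis (\ref{e:Big_sq}), the left hand side is at least $\a$, so rearranging we obtain
\[
 \frac{1}{|\L|^2}\sum_{\v{n}\in\L} \d^2_{\L'+\v{n}}(Q)
   \ge \a - \d^2 + 2\d\cdot\frac{1}{|\L|^2}\sum_{\v{n}\in\L} \d_{\L'+\v{n}}(Q).
\]

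Next I would apply the smoothing bound of Lemma \ref{l:smooth_d} (in the appropriate one- or two-dimensional form, as indicated by the note following that lemma), with $E=Q\subseteq\L$ and $\v{x}=0$. Since $\eps=\k/(100d)$, this gives
\[
 \left| \d_{\L}(Q) - \frac{1}{|\L|^2}\sum_{\v{n}\in\L}\d_{\L'+\v{n}}(Q) \right| \le 4\k,
\]
and of course $\d_{\L}(Q)=\d$ because $Q\subseteq\L$ with $|Q|=\d|\L|$. Substituting the lower bound $\d-4\k$ for the averaged density into the previous display yields
\[
 \frac{1}{|\L|^2}\sum_{\v{n}\in\L} \d^2_{\L'+\v{n}}(Q)
   \ge \a - \d^2 + 2\d(\d - 4\k)
   = \d^2 + \a - 8\d\k,
\]
and using $\d\le 1$ (and choosing the tighter one-dimensional constant in the smoothing step, which saves a factor of two) produces the stated bound $\d^2 + \a - 4\k$.

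Essentially the entire content of the lemma is the algebraic identity $\mathbf{E}(X-\d)^2 = \mathbf{E}X^2 - 2\d\,\mathbf{E}X + \d^2$ together with the fact that smoothing by the attendant $\L'$ barely changes the mean density. There is no real obstacle: the only point to watch is that the correct dimensional analog of Lemma \ref{l:smooth_d} is invoked so that the smoothing error is at most $4\k$, which immediately matches the $-4\k$ loss in the conclusion.
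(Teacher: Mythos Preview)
Your approach is correct and is exactly the natural argument: expand the square and invoke Lemma~\ref{l:smooth_d} to control the linear cross-term. The paper does not supply its own proof here (the lemma is quoted from \cite{Shkr_tri_LMS}), so there is nothing to compare against, but what you wrote is the standard derivation. Your remark about the constant is also on point: with the two-dimensional smoothing error $4\k$ one obtains $\d^2+\a-8\d\k$, while the one-dimensional analog of Lemma~\ref{l:smooth_d} loses only $2\k$ and yields $\d^2+\a-4\d\k\ge\d^2+\a-4\k$, matching the stated bound; this is consistent with the note following the lemma and with the way it is actually used (one coordinate at a time) in Corollary~\ref{cor:together}.
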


\begin{note*}
Clearly, the one--dimension analogs of Lemma \ref{l:l4} and Lemma
\ref{l:l1} take place.
\end{note*}

Also, in \cite{Shkr_tri_LMS} was proven a corollary.

\begin{corollary}
  Let $\L$ be a  Bohr set, $\a>0$ be a real number,
  and $E_1$, $E_2$  be sets, $|E_1 \cap \L| = \beta_1 |\L|$, $|E_2 \cap \L| = \beta_2 |\L|$.
  Suppose that either $E_1$ or $E_2$ does not satisfy (\ref{c:sm_sq}).
  Let $\L'$ be an arbitrary  $(2^{-10} \a^2 \beta_1^2 \beta_2^2) /  (100 d)$--attendant set  of $\L$.
  Then
  \begin{equation}\label{}
    \frac{1}{|\L|^2} \sum_{\v{n} \in \L} \d_{\L'+\v{n}}^2 (E_1 \m E_2) \ge  \beta_1^2 \beta_2^2  ( 1 + \frac{\a^2}{2} ) \,.
  \end{equation}
\label{cor:1case_c}
\end{corollary}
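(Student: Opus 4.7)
The plan is to exploit the product structure: because $\L'+\v{n}$ is literally the product $(\L'+n_1)\m(\L'+n_2)$, the density factorises,
\[
  \d_{\L'+\v{n}}(E_1\m E_2) \;=\; \d_{\L'+n_1}(E_1)\,\d_{\L'+n_2}(E_2),
\]
so that the double sum separates:
\[
  \frac{1}{|\L|^2}\sum_{\v{n}\in\L}\d^2_{\L'+\v{n}}(E_1\m E_2)
  \;=\;
  \Big(\frac{1}{|\L|}\sum_{n_1\in\L}\d^2_{\L'+n_1}(E_1)\Big)\cdot
  \Big(\frac{1}{|\L|}\sum_{n_2\in\L}\d^2_{\L'+n_2}(E_2)\Big).
\]
Thus the two-dimensional inequality reduces to estimating two one-dimensional averages and multiplying, which is where the one-dimensional analogs of Lemmas \ref{l:l4} and \ref{l:l1} (explicitly noted as available) enter.

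Without loss of generality suppose $E_1$ fails (\ref{c:sm_sq}), i.e.\ $\frac{1}{|\L|}\sum_{m}|\d_{\L'+m}(E_1)-\beta_1|^2 > \a^2$. Since $\L'$ is a $\k/(100d)$-attendant with $\k=2^{-10}\a^2\beta_1^2\beta_2^2$, the one-dimensional Lemma \ref{l:l1} gives
\[
  \frac{1}{|\L|}\sum_{n_1\in\L}\d^2_{\L'+n_1}(E_1) \;\ge\; \beta_1^2 + \a^2 - 4\k \;\ge\; \beta_1^2 + (1-2^{-8})\a^2.
\]
For $E_2$ I have no failure assumption, but Lemma \ref{l:smooth_d} in one dimension yields $\frac{1}{|\L|}\sum_{n_2}\d_{\L'+n_2}(E_2) \ge \beta_2 - 4\k$, and then Jensen (convexity of $t\mapsto t^2$) gives
\[
  \frac{1}{|\L|}\sum_{n_2\in\L}\d^2_{\L'+n_2}(E_2) \;\ge\; (\beta_2-4\k)^2 \;\ge\; \beta_2^2 - 8\k\beta_2.
\]

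The final step is to multiply these two bounds and verify that the cross-term losses are swallowed by the $\a^2$-gain from the first factor. Using $\beta_1,\beta_2\le1$ and the precise value $4\k=2^{-8}\a^2\beta_1^2\beta_2^2$, the product expands as
\[
  \beta_1^2\beta_2^2 \;+\; (1-2^{-8})\a^2\beta_2^2 \;-\; 8\k\beta_2\beta_1^2 \;-\; (\text{smaller}),
\]
and since $(1-2^{-8})\beta_2^2 - 8\k\beta_2\beta_1^2/\a^2 \ge (1-2^{-8}-2^{-7})\beta_2^2 > \tfrac12\beta_2^2 \ge \tfrac12 \beta_1^2\beta_2^2$, the required bound $\beta_1^2\beta_2^2(1+\a^2/2)$ follows. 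The only mildly delicate point is the bookkeeping in this last step: one must check that the $O(\k)$ errors coming from Lemma \ref{l:smooth_d} and from $4\k$ in Lemma \ref{l:l1} are all polynomially smaller than $\a^2\beta_1^2\beta_2^2$, which is precisely why the hypothesis fixes $\k$ proportional to $\a^2\beta_1^2\beta_2^2$ rather than to a weaker quantity.
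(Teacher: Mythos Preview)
Your argument is correct and follows essentially the same line as the paper's approach (the paper merely cites \cite{Shkr_tri_LMS} for this corollary, but the proof of the parallel Corollary~\ref{cor:together} shows the intended method): factor the two-dimensional sum into a product of one-dimensional averages, apply the one-dimensional Lemma~\ref{l:l1} to the factor failing (\ref{c:sm_sq}), bound the other factor from below, and multiply. The only cosmetic difference is that for the $E_2$-factor you invoke Lemma~\ref{l:smooth_d} together with Jensen to obtain $\ge\beta_2^2-8\k\beta_2$, whereas the paper would quote the one-dimensional Lemma~\ref{l:l4} directly for $\ge\beta_2^2-8\k$; these are equivalent up to harmless constants, and your final bookkeeping is sound.
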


The following lemma was proven by  J. Bourgain in \cite{Bu}. We
give his proof for the sake of completeness.

\begin{lemma}
  Let $\L = \L (S, \eps)$ be a Bohr set, $|S| = d \in {\bf N}$, $\a>0$ be a real number,
  and $Q$ be a set, $|Q \cap \L| = \d |\L|$.
  Suppose that
  \begin{equation}\label{e:Big_Fourier}
    \| (Q \cap \L - \d \L) \F{} ~ \|_{\infty}  \ge \a |\L| \,.
  \end{equation}
  Then there exists a Bohr set  $\L' = \L (S',\eps')$, $|S'| =d+1$ such that
  $\L'$ is  an $\eps_1$--attendant of $\L$,
  $\eps_1 = \frac{\k}{100 d}$, $\k\le \a/32$
  and
  \begin{equation}\label{}
    \frac{1}{|\L|} \sum_{n \in \L} | \d_{\L'+n} (Q) - \d |^2 \ge \frac{\a^2}{4} \,.
  \end{equation}
\label{l:l3}
\end{lemma}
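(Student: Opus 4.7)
My plan is to produce $\L'$ by adjoining to $S$ the frequency $\xi_0$ that realises the supremum in (\ref{e:Big_Fourier}), and then to convert the hypothesis $|\F{f}(\xi_0)| \ge \a|\L|$ (for the balanced function $f := Q \cap \L - \d\L$) into an $\ell^1$--lower bound on the local densities $\d_{\L'+n}(Q) - \d$ via one Parseval line, upgrading it to the required $\ell^2$--bound by Cauchy--Schwarz.

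Concretely, I first pick $\xi_0$ attaining the supremum, assume without loss of generality that $\xi_0 \notin S$ (otherwise adjoin any fresh character), set $S' = S \cup \{\xi_0\}$, and choose $\eps'$ with $\eps \eps_1 / 2 \le \eps' \le \eps \eps_1$, $\eps_1 = \k/(100d)$, so that by Lemma \ref{l:Reg_B} the Bohr set $\L' := \L(S',\eps')$ is regular. Then $|S'| = d+1$ and $\L'$ is by construction an $\eps_1$--attendant of $\L$. The point of including $\xi_0$ in the generating set is that $\|\xi_0 \cdot x\| < \eps'$ for all $x \in \L'$, which forces $|\F{\L'}(\xi_0)| \ge (1-2\pi\eps')|\L'| \ge (1-\k)|\L'|$.

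The heart of the argument is to study $F := f * \L'^{-}$, where $\L'^{-}(x) := \L'(-x)$. Using the Note after Lemma \ref{l:L_pm}, for every $n \in \L^{-}$ we have $\L' + n \subseteq \L$, so
\[
 F(n) = \sum_s f(s)\L'(s-n) = |Q \cap (\L' + n)| - \d|\L'| = |\L'|\,(\d_{\L' + n}(Q) - \d).
\]
Since $\F{F}(\xi_0) = \F{f}(\xi_0)\cdot \overline{\F{\L'}(\xi_0)}$, the triangle inequality applied to $\sum_n F(n)\,e(-\xi_0 \cdot n)$ gives $\sum_n |F(n)| \ge \a(1-\k)|\L||\L'|$. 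The support of $F$ lies in $\L + \L'$, which by the Corollary to Lemma \ref{l:L_pm} has cardinality at most $(1+2\k)|\L|$, while $|\L^{-}| \ge (1-\k)|\L|$; on the exceptional set $(\L + \L') \setminus \L^{-}$ of size $\le 3\k|\L|$ only the trivial pointwise bound $|F| \le |\L'|$ is available, which accounts for at most $3\k|\L||\L'|$. Subtracting, one obtains
\[
 \sum_{n \in \L^{-}} |\d_{\L' + n}(Q) - \d| \ge (\a - 4\k)|\L|,
\]
and Cauchy--Schwarz combined with $|\L^{-}| \le |\L|$ yields $\sum_{n \in \L} |\d_{\L' + n}(Q) - \d|^2 \ge (\a - 4\k)^2|\L|$, which under the hypothesis $\k \le \a/32$ comfortably exceeds $\a^2|\L|/4$.

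The main (and essentially only) obstacle is the bookkeeping needed to absorb the boundary contribution from $(\L + \L') \setminus \L^{-}$; this is precisely what forces both the attendant scale $\eps_1 = \k/(100d)$ (so that Lemma \ref{l:L_pm} provides the regularity estimates in the required form) and the hypothesis $\k \le \a/32$. Everything else reduces to a one-line Parseval/triangle computation followed by a routine $\ell^1 \to \ell^2$ Cauchy--Schwarz step.
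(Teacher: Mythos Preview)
Your proof is correct and follows essentially the same approach as the paper: pick the frequency $\xi_0$ realising the supremum, adjoin it to $S$ to form $\L'$, exploit that $e(-\xi_0\cdot x)$ is nearly constant on $\L'$, and then pass from an $\ell^1$ to an $\ell^2$ bound on the local density deviations via Cauchy--Schwarz. The paper unrolls the convolution by hand (replacing $\L$ by $(1/|\L'|)\,\L*\L'$ via Lemma~\ref{l:L_pm} and then swapping the phase $e(-\xi_0\cdot n)$ for $e(-\xi_0\cdot m)$), whereas you package the same computation more cleanly as $\F{F}(\xi_0)=\F{f}(\xi_0)\,\overline{\F{\L'}(\xi_0)}$ and handle the boundary $(\L+\L')\setminus\L^{-}$ explicitly; these are two presentations of the same argument, and your treatment of the case $\xi_0\in S$ is in fact slightly more careful than the paper's.
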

\begin{proof*}
Let $Q_1 = Q \cap \L$. Using (\ref{e:Big_Fourier}), we obtain
\begin{equation}\label{}
  | \F{Q}_1 (\xi_0) - \d \F{\L} (\xi_0) | \ge \a |\L| \,,
\end{equation}
where $\xi_0 \in \F{G}$. We have $\L = \L_{S,\eps}$, where $S
\subseteq \F{G}$. Put $S' = S \cup \{ \xi_0 \} \subseteq \F{G}$
and
\[
  \L' = \L_{S', \eps'}
\]
be an $\eps_1$--attendant of $\L$. Using Lemma \ref{l:L_pm}, we
get
\[
  \F{Q}_1 (\xi_0) = \sum_n Q (n) \L(n) e^{-2\pi i (\xi_0 \cdot n)} = \frac{1}{|\L'|} \sum_n (\L * \L') (n) Q (n) e^{-2\pi i (\xi_0 \cdot n)}
                                                                + 2 \k \vartheta  |\L| \,,
\]
where $|\vartheta| \le 1$. We have
\[
  \F{Q}_1 (\xi_0) = \frac{1}{|\L'|} \sum_m \sum_n \L' (n-m) \L(m) Q(n) e^{-2\pi i (\xi_0 \cdot n)} +  2 \k \vartheta |\L|
                =
\]
\[
                =
                  \frac{1}{|\L'|} \sum_m \sum_n \L' (n-m) \L(m) Q(n) e^{-2\pi i (\xi_0 \cdot m)} +
\]
\[
                +
                  \frac{1}{|\L'|} \sum_m \sum_n \L' (n-m) \L(m) Q(n) [ e^{-2\pi i (\xi_0 \cdot n)} - e^{-2\pi i (\xi_0 \cdot m)} ] + 2 \k \vartheta  |\L|
                =
\]
\[
                =
                  \sum_{m \in \L} \d_{\L' + m} (Q) e^{-2\pi i (\xi_0 \cdot m)} +
                  \vartheta_1 \frac{1}{|\L'|}  \sum_m \sum_n \L' (n-m) \L(m) Q(n) | e^{-2\pi i (\xi_0 \cdot (n-m))} - 1|
                +
\]
\begin{equation}\label{tmp:19:09}
                +
                  2 \k \vartheta |\L|
                =
                  \sum_{m \in \L} \d_{\L' + m} (Q) e^{-2\pi i (\xi_0 \cdot m)} +
                  (14 \k \vartheta_1 + 2 \k \vartheta ) |\L|  \,,
\end{equation}
where $|\vartheta_1| \le 1$. Using (\ref{e:Big_Fourier}) and
(\ref{tmp:19:09}), we obtain
\begin{equation}\label{}
  \Big| \sum_{m \in \L} \d_{\L' + m} (Q) e^{-2\pi i (\xi_0 \cdot m)} - \d \sum_{m\in \L} e^{-2\pi i (\xi_0 \cdot m)} \Big| \ge \frac{\a}{2} |\L| \,.
\end{equation}
Hence
\begin{equation}\label{}
  \sum_{m \in \L} | \d_{\L' + m} (Q)  - \d | \ge \frac{\a}{2} |\L| \,.
\end{equation}
Using the Cauchy--Schwartz inequality, we get
\begin{equation}\label{}
    \frac{1}{|\L|} \sum_{\v{n} \in \L} | \d_{\L'+\v{n}} (Q) - \d |^2 \ge \frac{\a^2}{4} \,.
\end{equation}
This completes the proof.
\end{proof*}

\begin{corollary}
  Let $\L = \L (S,\eps)$ be a Bohr set, $\a>0$ be a real number,
  and $E_1$, $E_2$ be sets, $|E_1 \cap \L| = \beta_1 |\L|$, $|E_2 \cap \L| = \beta_2 |\L|$
  Suppose that either $E_1$ or $E_2$ satisfies (\ref{e:Big_Fourier}).
  Then there exists $(2^{-10} \a^2 \beta_1^2 \beta_2^2) /  (100 d)$--attendant set
  $\L' = \L (S',\eps')$ of the Bohr set $\L$ such that
  \begin{equation}\label{}
    \frac{1}{|\L|^2} \sum_{\v{n} \in \L} \d_{\L'+\v{n}}^2 (E_1 \m E_2) \ge  \beta_1^2 \beta_2^2  ( 1 + \frac{\a^2}{8} )
  \end{equation}
  and
  \begin{equation}\label{f:tmp_star}
    |S'| = d+1 \,.
  \end{equation}
\label{cor:together}
\end{corollary}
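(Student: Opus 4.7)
The plan is to combine Lemma \ref{l:l3}, which converts the large Fourier coefficient hypothesis into an $L^2$--deviation statement on translates by a smaller Bohr set, with Corollary \ref{cor:1case_c}, which turns such $L^2$--deviation into the desired lower bound on the sum of squared densities of $E_1 \m E_2$. Without loss of generality I may assume that it is $E_1$ (rather than $E_2$) that satisfies (\ref{e:Big_Fourier}); the case of $E_2$ is symmetric.

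First, I apply Lemma \ref{l:l3} to the set $E_1$ with the given parameter $\a$, choosing the auxiliary constant $\k$ to be $\k = 2^{-10} \a^2 \beta_1^2 \beta_2^2$. Since $\beta_1, \beta_2 \le 1$ and $\a \le 1$ we have $\k \le \a/32$, so the lemma applies and produces a regular Bohr set $\L' = \L(S',\eps')$ with $S \subseteq S'$, $|S'| = d+1$, which is the $\k/(100d)$--attendant of $\L$ (and therefore precisely the $(2^{-10}\a^2\beta_1^2\beta_2^2)/(100d)$--attendant required in the conclusion). The lemma furnishes
\[
  \frac{1}{|\L|}\sum_{n\in \L}|\d_{\L'+n}(E_1)-\beta_1|^2 \ge \frac{\a^2}{4}.
\]

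Second, this inequality says exactly that $E_1$ fails condition (\ref{c:sm_sq}) with parameter $\a/2$ (any sufficiently small perturbation preserves the strict failure). I therefore apply Corollary \ref{cor:1case_c} with the parameter $\a$ there replaced by $\a/2$: the attendant scale that the corollary needs is $(2^{-10}(\a/2)^2\beta_1^2\beta_2^2)/(100d) = (2^{-12}\a^2\beta_1^2\beta_2^2)/(100d)$, which is a coarser requirement than the attendant $\L'$ I already produced, so its hypothesis is satisfied. The corollary then yields
\[
  \frac{1}{|\L|^2}\sum_{\v{n}\in \L}\d_{\L'+\v{n}}^2(E_1\m E_2)\ge \beta_1^2\beta_2^2\Big(1+\frac{(\a/2)^2}{2}\Big)=\beta_1^2\beta_2^2\Big(1+\frac{\a^2}{8}\Big),
\]
and $|S'|=d+1$ was preserved from Lemma \ref{l:l3}, which is (\ref{f:tmp_star}).

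If one prefers to avoid invoking Corollary \ref{cor:1case_c} as a black box, the conclusion can be obtained by hand from the factorisation $\d_{\L'+\v{n}}(E_1\m E_2)=\d_{\L'+n_1}(E_1)\,\d_{\L'+n_2}(E_2)$, which turns the double average into a product of one--dimensional averages. The one--dimensional analog of Lemma \ref{l:l1} applied to $E_1$ gives $\frac{1}{|\L|}\sum_n\d^2_{\L'+n}(E_1)\ge \beta_1^2+\a^2/4-4\k$, while the one--dimensional analog of Lemma \ref{l:l4} applied to $E_2$ gives $\frac{1}{|\L|}\sum_n\d^2_{\L'+n}(E_2)\ge \beta_2^2-8\k$; multiplying and using $\k=2^{-10}\a^2\beta_1^2\beta_2^2$ absorbs the error terms.

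The only real obstacle is bookkeeping: one must check that the single value of $\k$ chosen at the start of the proof simultaneously satisfies the constraint $\k\le \a/32$ of Lemma \ref{l:l3}, produces the exact attendant scale $(2^{-10}\a^2\beta_1^2\beta_2^2)/(100d)$ demanded by the statement, and is small enough for the errors $4\k,\,8\k$ to be dominated by the gap between $\a^2/4$ and $\a^2/8$; the choice $\k=2^{-10}\a^2\beta_1^2\beta_2^2$ achieves all three.
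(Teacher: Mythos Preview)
Your proposal is correct, and your ``by hand'' alternative is precisely the paper's own proof: factor the double average as a product of two one--dimensional averages, apply Lemma~\ref{l:l3} followed by Lemma~\ref{l:l1} to the set with the large Fourier coefficient to get $\tfrac{1}{|\L|}\sum_x \d_{\L'+x}^2(E_1)\ge \beta_1^2+\a^2/4-4\k$, apply Lemma~\ref{l:l4} to the other set to get $\tfrac{1}{|\L|}\sum_y \d_{\L'+y}^2(E_2)\ge \beta_2^2-8\k$, and multiply, with $\k=2^{-10}\a^2\beta_1^2\beta_2^2$ absorbing the errors.

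One small remark on your black--box route through Corollary~\ref{cor:1case_c}: an $\eps$--attendant is by definition a Bohr set whose radius lies in the window $[\eps\eps_0/2,\eps\eps_0]$, so a $(2^{-10}\a^2\beta_1^2\beta_2^2)/(100d)$--attendant is \emph{not} literally a $(2^{-12}\a^2\beta_1^2\beta_2^2)/(100d)$--attendant; the two windows are disjoint, and ``coarser'' is not the right word. What is true (and what you verify in the final paragraph) is that the only role of the attendant scale in Corollary~\ref{cor:1case_c} is to make the error terms $4\k$ and $8\k$ negligible against the main gain, and your choice of $\k$ still achieves that. So the direct argument is the clean way to go, and it coincides with the paper's.
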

\begin{proof*}
Let $\v{n} = (x,y)$, and $\k = 2^{-10} \a^2 \beta_1^2 \beta_2^2$.
We have
\begin{equation}\label{tmp:22:45'}
  \frac{1}{|\L|^2} \sum_{\v{n} \in \L} \d_{\L'+\v{n}}^2 (E_1 \m E_2)
  =
  \Big( \frac{1}{|\L|} \sum_{x \in \L} \d_{\L' + x}^2 (E_1) \Big)
  \Big( \frac{1}{|\L|} \sum_{y \in \L} \d_{\L' + y}^2 (E_2) \Big)
\end{equation}
We can assume without loss of generality that $E_1$ satisfies
(\ref{e:Big_Fourier}). Using Lemma \ref{l:l3} and Lemma
\ref{l:l1}, we obtain
\begin{equation}\label{Tmp:22:47}
  \frac{1}{|\L|} \sum_{x \in \L} \d_{\L' + x}^2 (E_1) \ge \beta_1^2 + \frac{\a^2}{4} - 4 \k \,.
\end{equation}
Let us estimate the second term in (\ref{tmp:22:45'}). Using Lemma
\ref{l:l4}, we get
\begin{equation}\label{Tmp:22:47'}
  \frac{1}{|\L|} \sum_{y \in \L} \d_{\L' + y}^2 (E_2) \ge \beta_2^2 - 8 \k \,.
\end{equation}
Combining (\ref{Tmp:22:47}) and (\ref{Tmp:22:47'}), we obtain
\[
   \frac{1}{|\L|^2} \sum_{\v{n} \in \L} \d_{\L'+\v{n}}^2 (E_1 \m E_2)
        \ge
   ( \beta_1^2 + \frac{\a^2}{4} - 4 \k ) ( \beta_2^2 - 8 \k) \ge \beta_1^2 \beta_2^2  ( 1 + \frac{\a^2}{8} ) \,.
\]
This concludes  the proof.
\end{proof*}

We shall say that the set $S'$ from (\ref{f:tmp_star}) is
constructed by Corollary  \ref{cor:together}.

Clearly, all lemmas of this section apply to  translations of Bohr
sets.

Our further arguments and arguments from \cite{Shkr_tri_LMS} are
particulary the same.
\\

Let  $\mathbf{\L}$ be a union of a family of  Bohr sets 
$\L_0^*,  \L_1^* (\v{x}_0), \dots,  \L_{n}^* (\v{x}_0,\dots,
\v{x}_{n-1})$
and a sequence of some translations of Bohr sets $\Lambda_0,
\L_1(\v{x}_0), \dots, \L_{n} (\v{x}_0,\dots, \v{x}_{n-1})$ such
that
\[
  \L_1 (\v{x}_0) \mbox{ and }  \L_1^* (\v{x}_0)  \mbox{ are defined iff } \v{x}_0 \in \L_0              
\]
\[
  \L_2 (\v{x}_0, \v{x}_1) \mbox{ and }  \L_2^* (\v{x}_0, \v{x}_1) \mbox{ are defined iff } \v{x}_1 \in \L_1(\v{x}_0), \v{x}_0 \in \L_0
\]
\[
  \dots
\]
\[
  \L_n (\v{x}_0, \dots, \v{x}_{n-1} ) \mbox{ and }  \L_n^* (\v{x}_0, \dots, \v{x}_{n-1} ) \mbox{ are defined iff }
\]
\begin{equation}\label{c:L}
  \v{x}_{n-1} \in \L_{n-1} (\v{x}_0, \dots, \v{x}_{n-2}),
  \v{x}_{n-2} \in \L_{n-2} (\v{x}_0, \dots, \v{x}_{n-3}), \dots,
  \v{x}_0 \in \L_0 \,.
\end{equation}

Let   $m\ge 0$ be an integer number and $\mathbf{\L}$ be a family
of Bohr sets satisfies  (\ref{c:L}). Let  $g : 2^{G} \m ({G\m G})
\to {\bf D}$ be a function. Let us define the  {\it index} of $g$,
respect $\mathbf{\L}$, for all $k = 0,\dots, m$ by
\[
  \ind_k (\mathbf{\L}) (g)
    =
  \frac{1}{|\L_0|^2} \sum_{\v{x}_0 \in \L_0} \frac{1}{|\L_1(\v{x}_0)|^2} \sum_{\v{x}_1 \in \L_1 (\v{x}_0) }
    \dots
\]
\begin{equation}\label{i:k}
  \frac{1}{|\L_k(\v{x}_0, \dots, \v{x}_{k-1}) |^2}
  \sum_{\v{y} \in \L_k (\v{x}_0, \dots, \v{x}_{k-1}) }
  g(\L_k^* (\v{x}_0, \dots, \v{x}_{k-1}) ,\v{y}) \,.
\end{equation}

Let $M_k = M_k (\v{x}_0, \dots, \v{x}_{k-1})$  be the family of
sets such that $M_k (\v{x}_0, \dots, \v{x}_{k-1}) \subseteq \L_k
(\v{x}_0, \dots, \v{x}_{k-1})$ for all $(\v{x}_0, \dots,
\v{x}_{k-1})$. For any $k=0,\dots,m$ by  $\ind_k (\mathbf{\L},M)
(g)$ define the following expression
\[
  \ind_k (\mathbf{\L},M) (g) =
  \frac{1}{|\L_0|^2} \sum_{\v{x}_0 \in \L_0} \frac{1}{|\L_1(\v{x}_0)|^2} \sum_{\v{x}_1 \in \L_1 (\v{x}_0) }
    \dots
\]
\begin{equation}\label{i:k_M}
  \frac{1}{|\L_k(\v{x}_0, \dots, \v{x}_{k-1}) |^2}
  \sum_{\v{y} \in M_k (\v{x}_0, \dots, \v{x}_{k-1}) }
  g(\L_k^* (\v{x}_0, \dots, \v{x}_{k-1}) ,\v{y}) \,.
\end{equation}
Clearly, we have $| \ind_k (\mathbf{\L}, M) (g) | \le 1$, for any
natural $k\ge 0$, a family $M_k$ and a  function $g : 2^{G} \m
({G\m G}) \to {\bf D}$.

The following simple lemma was proven in \cite{Shkr_tri_LMS}.

\begin{lemma}
  Let $Q$ be a subset of $\L_0 \m \L_0$, and $|Q| = \delta |\L_0|^2$.
  Suppose that $\L_k^* (\v{x}_0, \dots, \v{x}_{k-1} )$ is an arbitrary $\eps$--attendant of
  $\L_k (\v{x}_0, \dots, \v{x}_{k-1} )$, $\eps = \k / (100 d)$.
  Let $ g(M,\v{x}) = \d_{M + \v{x}} (Q)$.
   Then for all $k=0,\dots, n$ we have
  \begin{equation}\label{}
    \Big| \ind_k (\mathbf{\L}) (g) - \d \Big| \le  4 \k (k+1)   \,.
  \end{equation}
\label{l:keps}
\end{lemma}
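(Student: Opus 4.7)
\emph{Proof proposal.} The plan is to argue by induction on $k$, using Lemma \ref{l:smooth_d} to absorb the innermost averaging layer into an error of size at most $4\k$ at each step.

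For the base case $k=0$, the quantity $\ind_0(\mathbf{\L})(g)$ is simply the average of $\d_{\L_0^* + \v{x}_0}(Q)$ over $\v{x}_0 \in \L_0$. Since $\L_0^*$ is an $\eps$--attendant of $\L_0$ with $\eps = \k/(100d)$, one application of Lemma \ref{l:smooth_d} (with $E = Q$ and $\v{x} = 0$) will give $|\ind_0(\mathbf{\L})(g) - \d_{\L_0}(Q)| \le 4\k$, and $\d_{\L_0}(Q) = \d$ by the hypothesis $|Q| = \d |\L_0|^2$. Thus the bound $4\k(k+1) = 4\k$ holds for $k=0$.

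For the inductive step, the plan is to peel off the innermost sum. Fixing all outer parameters $\v{x}_0, \dots, \v{x}_{k-1}$, Lemma \ref{l:smooth_d} applied with $\L = \L_k$, $\L' = \L_k^*$, and $E = Q$ will give
\[
  \Big| \frac{1}{|\L_k|^2} \sum_{\v{y} \in \L_k} \d_{\L_k^* + \v{y}}(Q) - \d_{\L_k}(Q) \Big| \le 4\k.
\]
Here one uses the structural fact, built into the nested construction of $\mathbf{\L}$, that $\L_k(\v{x}_0, \dots, \v{x}_{k-1}) = \L_{k-1}^*(\v{x}_0, \dots, \v{x}_{k-2}) + \v{x}_{k-1}$, so that $\d_{\L_k}(Q) = g(\L_{k-1}^*, \v{x}_{k-1})$. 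Re-averaging over the outer parameters (convex combinations preserve the pointwise $4\k$ bound) then yields $|\ind_k(\mathbf{\L})(g) - \ind_{k-1}(\mathbf{\L})(g)| \le 4\k$. Combining this with the inductive hypothesis $|\ind_{k-1} - \d| \le 4\k \cdot k$ via the triangle inequality completes the induction, producing $|\ind_k - \d| \le 4\k(k+1)$.

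The one delicate point is verifying the structural identification $\L_k = \L_{k-1}^* + \v{x}_{k-1}$ within the nested family $\mathbf{\L}$; this is a matter of carefully unpacking the construction of $\mathbf{\L}$ (built recursively through successive attendants and translations) rather than a genuine mathematical obstacle. Once it is in hand, the whole argument is a routine telescoping of the single smoothing estimate in Lemma \ref{l:smooth_d}.
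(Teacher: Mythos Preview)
The paper does not supply its own proof of this lemma; it simply cites \cite{Shkr_tri_LMS}. Your induction via Lemma~\ref{l:smooth_d} is correct and is exactly the natural argument: collapse the innermost average $\frac{1}{|\L_k|^2}\sum_{\v{y}\in\L_k}\d_{\L_k^*+\v{y}}(Q)$ to $\d_{\L_k}(Q)=g(\L_{k-1}^*,\v{x}_{k-1})$ up to a $4\k$ error, then invoke the inductive hypothesis and the triangle inequality. The structural identification $\L_k(\v{x}_0,\dots,\v{x}_{k-1})=\L_{k-1}^*(\v{x}_0,\dots,\v{x}_{k-2})+\v{x}_{k-1}$ that you flag is indeed required and is exactly how the family $\mathbf{\L}$ is built in the proof of Proposition~\ref{prop:point_3} (the only place the lemma is invoked), so your caution there is appropriate but not an obstruction.
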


   The next result is the main in this section.

\begin{proposition}
Let $\L= \L(S, \eps_0)$ be a Bohr set, $|S| = d$, and $\v{s} =
(s_1,s_2)$ be a vector. Let $\eps, \sigma,\tau, \d \in (0,1)$ be
real numbers, $E_1$, $E_2$ be sets, $E_i = \beta_i |\L|$, $i=1,2$.
Suppose that  ${\bf E} = E_1 \m E_2$ is a subset of  $(\L+s_1) \m
(\L+s_2)$, $A \subset {\bf E}$, $\d_{{\bf E}} (A) = \d + \tau$,
and $\eps \le \k/(100 d)$, $ \k = 2^{-100} ( \tau \beta_1 \beta_2
)^5 \sigma^3 $. Let
\begin{equation}\label{c:Nge}
  N \ge (2^{-100} \eps_0 \eps)^{ - 2^{100} ( (\tau \beta_1 \beta_2)^{-5} \sigma^{-3} + d )^2  }  \,,
\end{equation}
and $
 ~\sigma \le 2^{-100} \tau \beta_1 \beta_2
$. Then there exists a Bohr set $\L'= \L(S',\eps')$, $|S'| = D$, $
  D\le 2^{30} (\tau \beta_1 \beta_2)^{-5} \sigma^{-3} + d
$, $
  \eps' \ge (2^{-10} \eps)^D \eps_0
$
and a vector $\v{t} = (t_1,t_2)$ such that if $E_1' = (E_1 - t_1)
\cap \L'$, $E_2' = (E_2 - t_2) \cap \L'$, ${\bf E'} = E_1' \m
E_2'$,
then                                                                                                    
\\
$ 1)~ |{\bf E'}| \ge \beta_1 \beta_2 \tau |\L'| /16$; \\
$ 2)~ E_1', E_2' $ are $(\sigma,\eps)$--uniform subsets of $\L'$; \\
$ 3)~ \d_{ {\bf E'} } (A - \v{t}) \ge \d + \tau/16 .$
\label{prop:point_3}
\end{proposition}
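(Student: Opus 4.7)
The plan is to run an iterative density-increment scheme on the index
\[
\Phi(\L_k, \L'_k) = \frac{1}{|\L_k|^2}\sum_{\v{x}\in \L_k}\d^2_{\L'_k + \v{x}}(E_1 \m E_2),
\]
where $\L'_k$ is an $\eps$-attendant of $\L_k$. We build a nested sequence of Bohr sets $\L = \L_0 \supseteq \L_1 \supseteq \cdots$ whose dimension satisfies $|S_k| = d+k$ and whose radius shrinks by a factor $\eps$ at each step, and along the way maintain a translate $\v{t}_k$ for which $A - \v{t}_k$ has density at least $\d + \tau_k$ on the analogue of $\mathbf{E}$ inside $\L_k$, with $\tau_k \ge \tau/4$ held throughout the iteration.

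At step $k$, I would consider the family $F_k$ of translates $\v{x}$ of $\L'_k$ sitting inside $\L_k + \v{t}_k$ for which $A$'s density on the corresponding slab is at least $\d + \tau_k/2$. By Lemma \ref{l:Paley} applied to the balanced function of $A - \v{t}_k$ (suitably smoothed by $\L'_k$ via Lemma \ref{l:smooth_d}), $F_k$ has positive measure controlled by a small power of $\tau$. If for a positive fraction of $\v{x} \in F_k$ both $(E_1 - x_1)$ and $(E_2 - x_2)$ are $(\sigma, \eps)$-uniform subsets of $\L'_k$, pick such an $\v{x}$, set $\v{t} = \v{x} + \v{t}_k$ and verify the three conclusions $1), 2), 3)$. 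Otherwise most $\v{x} \in F_k$ violate at least one of (\ref{c:sm_B}), (\ref{c:sm_sq}), (\ref{c:sm_total_F}) for $E_1$ or $E_2$; by Corollary \ref{cor:together} (applied to each failure mode via the underlying Lemmas \ref{l:l1}, \ref{l:l3} in their translation-covariant form noted after them), there is $\L_{k+1}$, an $\eps$-attendant of $\L_k$ with $|S_{k+1}| = |S_k|+1$, such that the next index strictly exceeds the previous one by at least $c\sigma^2\beta_1^2\beta_2^2 \tau^q$ for some absolute $c, q$; then iterate.

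Since $\Phi \le \beta_1\beta_2$, this iteration can occur at most $O(\sigma^{-2}\beta_1^{-1}\beta_2^{-1}\tau^{-q})$ times; with $q$ coming out of the Paley--Zygmund step and absorbing all the $\sigma^{2/3}$ losses incurred when invoking Lemma \ref{l:intermediate}, one arrives at the stated bound $D \le 2^{30}(\tau\beta_1\beta_2)^{-5}\sigma^{-3}+d$. The lower bound (\ref{c:Nge}) on $N$ is precisely what Lemma \ref{l:Bohr_est} requires in order to guarantee $|\L'| \ge 1$ at the final scale, where $\eps' \ge (2^{-10}\eps)^D\eps_0$ and $|S'|=D$; the $D^2$ in the exponent of (\ref{c:Nge}) compounds the per-step $\eps$-shrinkage with the dimension growth. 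The main technical obstacle is the triple case analysis when $(\sigma,\eps)$-uniformity fails for $E_1$ or $E_2$: each of (\ref{c:sm_B}), (\ref{c:sm_sq}), (\ref{c:sm_total_F}) calls for a different application of Lemmas \ref{l:l1}, \ref{l:l3} adapted to the two-dimensional translated setting, and balancing these three against the Paley--Zygmund loss in selecting $F_k$ is what determines the final exponents $\sigma^{-3}$ and $\tau^{-5}$.
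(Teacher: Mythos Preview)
Your high-level plan --- iterate a density-increment on the squared-density index until it can no longer rise, then extract a translate where the $E_i$ are uniform and $A$ is still dense --- matches the paper's. But there is a structural gap: you propose a single nested chain $\L_0 \supseteq \L_1 \supseteq \cdots$ with one attendant $\L'_k$ per level, and claim that when uniformity fails on $F_k$ there is \emph{one} $\L_{k+1}$ with $|S_{k+1}| = |S_k|+1$ making the index jump. That step does not go through. When $(\t E_1 - x_1)\cap \L'_k$ fails (\ref{c:sm_total_F}), the new frequency $\xi_0$ supplied by Lemma~\ref{l:l3} depends on $\v{x}$; different $\v{x}\in F_k$ will in general produce different $\xi_0$, and no single enlargement $S_k \cup \{\xi_0\}$ gives the increment simultaneously for all of them. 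The paper handles this by building a \emph{tree} of Bohr sets $\L^*_k(\v x_0,\dots,\v x_{k-1})$ indexed by paths, letting the attendant depend on the entire history, and defining the index $\ind_k(\mathbf{\L}_k)(g)$ as an average over that tree (see (\ref{i:k})). The termination bound $K_0$ and the final extraction of $\v t$ via Lemma~\ref{l:keps} and the sets $R_K$, $\o R_K$, $E_K$ all live on this tree, not on a chain.

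A second, smaller gap is your treatment of the failure of (\ref{c:sm_B}). Unlike (\ref{c:sm_sq}) and (\ref{c:sm_total_F}), which feed directly into Corollaries~\ref{cor:1case_c} and~\ref{cor:together}, the case where the exceptional set $B$ is large does not yield an immediate index increment at the current scale. The paper's argument there is genuinely two-step: largeness of $B$ means that at many sub-translates $\v p \in \t B_k$ the restricted sets fail $\sigma/2$-uniformity, so Corollary~\ref{cor:together} applies at the \emph{next} level for those $\v p$, and the gain is harvested in $\ind_{k+1}$ versus $\ind_{k-1}$ (this is the computation (\ref{tmp:12:52_1})--(\ref{tmp:12:57})). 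Your sketch collapses this into ``a different application of Lemmas~\ref{l:l1},~\ref{l:l3},'' which hides the extra descent and is also where the factor $\tau^2\beta^2$ enters the increment and the final exponent $(\tau\beta)^{-5}\sigma^{-3}$ comes from.
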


\begin{proof*}
Let $\beta = \beta_1 \beta_2$, and $\t{E}_1 = E_1 - s_1$, $\t{E}_2
= E_2 -s_2$, $\t{E} = \t{E}_1 \m \t{E}_2$.
If the sets $\t{E}_1$, $\t{E}_2$ are $(\sigma,\eps)$--uniform
subsets of $\L$, then Proposition \ref{prop:point_3} is proven.

Suppose that $\t{E}_1$, $\t{E}_2$ are not
$(\sigma,\eps)$--uniform subsets of  $\L$. We shall construct a
family of Bohr sets $\mathbf{\L}$ such that $\mathbf{\L}$
satisfies the conditions (\ref{c:L}). The proof of Proposition
\ref{prop:point_3} is a sort of an algorithm. At the first step of
our algorithm we put $\L_0 = \L = \L (S,\eps_0)$. If either
$\t{E}_1$ or $\t{E}_2$ does not satisfy (\ref{c:sm_total_F}) with
$\alpha = \sigma/2$, then
let $\L_0^*$ be an $\eps$--attendant of  $\L_0$ such that $\L_0^*$ is constructed by Corollary  \ref{cor:together}.        
In the other cases let $\L_0^*$ be an $\eps$--attendant of  $\L_0$
with the same set $S$ to be chosen later. Define
\[
  R_0 = \{ \v{p} = (p_1,p_2) \in \L_0 ~|~ \t{E}_1 - p_1, \t{E}_2 - p_2 \mbox { are } (\sigma,\eps)\mbox{--uniform in } \L_0^*
\]
\[
           \mbox{ or } \d_{\L_0^* + \v{p} } (\t{E}_1 \m \t{E}_2) < \beta \tau /16
        \}
\]
and $\o{R}_0 = (\L_0 \m \L_0) \setminus R_0$.

Let $\tilde{\L}$ be an arbitrary Bohr set, and $\v{n} \in G\m G$
be an arbitrary vector. Put  $g(\tilde{\L}, \v{n}) = \d^2_{
\tilde{\L} + \v{n}} ( \t{E})$, $g_1 (\t{\L}, \v{x}) =
\d_{\t{\L}+\v{n}} (A)$, $ g_2 (\t{\L},\v{n}) = \d_{\t{E} \cap
\t{\L} + \v{n} } (A) $ and $ g_3(\t{\L}, \v{n}) = \d_{\t{\L} +
\v{n}} (\t{E})$. Clearly,  $g(\tilde{\L}, \v{n}) = g^2_3 (\t{\L},
\v{n})$ and $ g_1 (\t{\L}, \v{x}) \le g_3 (\t{\L}, \v{n})$.
Besides that, we have
\[
  g_1 (\t{\L},\v{n}) = g_2 (\t{\L}, \v{n}) g_3(\t{\L}, \v{n}) \,.
\]

Let $\mathbf{\L}_0 = \{ \L_0 \}$. If $\ind_0
(\mathbf{\L}_0,\o{R}_0) (g_3) < \tau \beta /4$, then we stop the
algorithm at step $0$.

Using Lemma  \ref{l:smooth_d} and the Cauchy--Schwartz inequality,
we get
\begin{equation}\label{}
  \ind_0 (\mathbf{\L}_0 ) (g) \ge
         \Big( \frac{1}{|\L_0|^2} \sum_{\v{y} \in \L_0} \d_{\L_0^* + \v{y}} (\t{E}) \Big)^2
            \ge
            \beta / 2 \,.
\end{equation}

Let after the $k$th step of the algorithm the family of Bohr sets
$\mathbf{\L}_k$ has been constructed, $k\ge 0$.

Let
\[
  \L_{k+1} (\v{x}_0, \dots, \v{x}_{k} ) = \L_k^* (\v{x}_0, \dots, \v{x}_{k-1}) + \v{x}_k \,, \,
   \v{x}_k \in \L_k (\v{x}_0, \dots, \v{x}_{k-1}) \,.
\]
Let $\v{x}_k = (a,b)$, and $\L_k^* = \L_k^* (\v{x}_0, \dots,
\v{x}_{k-1})$. If either $(\t{E}_1 - a) \cap \L_k^* $ or $(\t{E}_2
- b) \cap \L_k^*$ does not satisfy  (\ref{c:sm_total_F}) with $\a
= \sigma/2$, then let $\L_{k+1}^* (\v{x}_0, \dots, \v{x}_{k} )$ be
an $\eps$--attendant of $\L_{k}^* (\v{x}_0, \dots, \v{x}_{k} )$
such that $\L_{k+1}^* (\v{x}_0, \dots, \v{x}_{k} )$ is constructed
by Corollary  \ref{cor:together}. In the other cases let
$\L_{k+1}^* (\v{x}_0, \dots, \v{x}_{k} )$ be an $\eps$--attendant
of $\L_{k}^* (\v{x}_0, \dots, \v{x}_{k} )$ with the same
generative vector.

By $R_{k+1} (\v{x}_0, \dots, \v{x}_{k})$, $\o{R}_{k+1} (\v{x}_0,
\dots, \v{x}_{k})$ denote the sets
\[
  R_{k+1} (\v{x}_0, \dots, \v{x}_{k}) =
            \{ \v{p} = (p_1,p_2) \in \L_k^* (\v{x}_0, \dots, \v{x}_{k-1}) + \v{x}_k
            ~|~ \t{E}_1 - p_1, \t{E}_2 - p_2
\]
\[
   \mbox { are }
   (\sigma,\eps)\mbox{--uniform in } \L_{k+1}^* (\v{x}_0, \dots, \v{x}_{k} )
\]
\[
           \mbox{ or } \d_{\L_{k+1}^* (\v{x}_0, \dots, \v{x}_{k} ) + \v{p} } (\t{E}_1 \m \t{E}_2) < \tau \beta /16
        \}
\]
and $
  \o{R}_{k+1} (\v{x}_0, \dots, \v{x}_{k}) =
        ( \L_k^* (\v{x}_0, \dots, \v{x}_{k-1}) + \v{x}_k ) \setminus R_{k+1} (\v{x}_0, \dots, \v{x}_{k})
$.

By $E_{k} (\v{x}_0, \dots, \v{x}_{k-1})$ denote the sets
\[
  E_{k} (\v{x}_0, \dots, \v{x}_{k-1}) =
\]
\[
  \{
    \v{p} = (p_1,p_2) \in \L_{k-1}^* (\v{x}_0, \dots, \v{x}_{k-2}) + \v{x}_{k-1}
            ~|~
                \d_{\L_{k}^* (\v{x}_0, \dots, \v{x}_{k-1} ) + \v{p} } (\t{E}_1 \m \t{E}_2) < \tau \beta /16
  \}.
\]
Obviously, $E_{k} (\v{x}_0, \dots, \v{x}_{k-1}) \subseteq R_{k}
(\v{x}_0, \dots, \v{x}_{k-1})$, $k=0,1,\dots$

Let $\mathbf{\L'}_{k+1} = \{ \L_{k+1} (\v{x}_0, \dots, \v{x}_{k})
\}$, $\v{x}_k \in \L_k (\v{x}_0, \dots, \v{x}_{k-1})$, and
$\mathbf{\L}_{k+1} = \{ \mathbf{\L}_k , \mathbf{\L'}_{k+1} \}$. If
$\ind_{k+1} (\mathbf{\L}_{k+1}, \o{R}_{k+1}) (g_3) < \tau \beta
/4$, then we stop the algorithm at step $k+1$.

Let $\L_{k-1}^* = \L_{k-1}^* (\v{x}_0, \dots, \v{x}_{k-2})$, and $
\beta_k' = \d_{\L_{k-1}^*} (\t{E}_1)$, $ \beta_k'' =
\d_{\L_{k-1}^*} (\t{E}_2)$. Suppose $\v{x}_{k-1} = (a',b')$
belongs to  $\o{R}_{k-1} (\v{x}_0, \dots, \v{x}_{k-2})$. Note that
$\v{x}_{k-1}$ does not belong to $E_{k-1} (\v{x}_0, \dots,
\v{x}_{k-2})$.
Let us consider three cases. \\
Case 1 :  either $( \t{E}_1 - a') \cap \L_{k-1}^*$ or
$(\t{E}_2 - b') \cap \L_{k-1}^*$ does not satisfy (\ref{c:sm_B}). \\
Case 2 :  either $( \t{E}_1 - a') \cap \L_{k-1}^*$ or
$(\t{E}_2 - b') \cap \L_{k-1}^*$ does not satisfy (\ref{c:sm_sq}). \\
Case 3 :  either $( \t{E}_1 - a') \cap \L_{k-1}^*$ or
$(\t{E}_2 - b') \cap \L_{k-1}^*$ does not satisfy (\ref{c:sm_total_F}). \\
Note that  $\a$  equals $\sigma$ in all these cases. \\
Let us consider the following situation  :  either $(\t{E}_1 - a')
\cap \L_{k-1}^*$ or $(\t{E}_2 - b') \cap \L_{k-1}^*$ does not
satisfy (\ref{c:sm_total_F}) with $\a= 2^{-4} \sigma^{3/2}$. Let
\begin{equation}\label{}
  S_0 =
       \frac{1}{ |\L_k (\v{x}_0, \dots, \v{x}_{k-1}) |^2 }
       \sum_{\v{y} \in \L_k (\v{x}_0, \dots, \v{x}_{k-1})} g(\L^*_k (\v{x}_0, \dots, \v{x}_{k-1}), \v{y}) \,,
\end{equation}
where $\L_{k}^* (\v{x}_0, \dots, \v{x}_{k-1} )$ is an
$\eps$--attendant of  $\L_{k} (\v{x}_0, \dots, \v{x}_{k-1} )$ such
that $\L_{k}^* (\v{x}_0, \dots, \v{x}_{k-1} )$ is constructed by
Corollary  \ref{cor:together}. Using Corollary \ref{cor:together},
we get
\[
   S_0
        \ge
            g ( \L_k (\v{x}_0, \dots, \v{x}_{k-1}), 0) ( 1 + 2^{-11} \sigma^3 )
        =
\]
\begin{equation}\label{}
        =
            g ( \L_{k-1}^* (\v{x}_0, \dots, \v{x}_{k-2}), \v{x}_{k-1} ) ( 1 + 2^{-11} \sigma^3 ) \,.
\end{equation}
Note that in this case, we have $\dim \L^*_k (\v{x}_0, \dots,
\v{x}_{k-1}) = \dim \L_k (\v{x}_0, \dots, \v{x}_{k-1}) + 1$.

Suppose that either $(\t{E}_1 - a') \cap \L_{k-1}^*$ or $(\t{E}_2
- b') \cap \L_{k-1}^*$ does not satisfy (\ref{c:sm_sq}) with $\a =
2^{-4} \sigma^{3/2}$. Using Corollary \ref{cor:1case_c}, we
obtain
\begin{equation}\label{}
  S_0
        \ge
              g ( \L_{k-1}^* (\v{x}_0, \dots, \v{x}_{k-2}), \v{x}_{k-1} ) ( 1 + 2^{-11} \sigma^3 ) \,.
\end{equation}
In this case, we have $\dim \L^*_k (\v{x}_0, \dots, \v{x}_{k-1}) =
\dim \L_k (\v{x}_0, \dots, \v{x}_{k-1})$.

Finally, suppose that either $(\t{E}_1 - a') \cap \L_{k-1}^*$ or
$(\t{E}_2 - b') \cap \L_{k-1}^*$ does not satisfy (\ref{c:sm_B})
with $\a = \sigma$. Note that $(\t{E}_1 - a') \cap \L_{k-1}^*$
{\it and} $(\t{E}_2 - b') \cap \L_{k-1}^*$ satisfy (\ref{c:sm_sq})
with $\a = 2^{-4} \sigma^{3/2}$. Let $\L_{k}^* = \L_{k}^*
(\v{x}_0, \dots, \v{x}_{k} )$. Define
\[
  B_k (\v{x}_0, \dots, \v{x}_{k-1}) = \{ \v{p} = (p_1,p_2) \in \L_k (\v{x}_0, \dots, \v{x}_{k-1})
            ~:~
\]
\[
                \| ( (\t{E}_1 - p_1)  - \beta_k' \L_{k}^* )\F{}~ \|_{\infty} \ge \sigma |\L_{k}^*|
                    \mbox{ or }
                    \| ( (\t{E}_2 - p_2)  - \beta_k'' \L_{k}^* )\F{}~ \|_{\infty} \ge \sigma |\L_{k}^*| \} \,.
\]
We have
\begin{equation}\label{tmp:12:51}
    | B_k (\v{x}_0, \dots, \v{x}_{k-1}) | \ge  \sigma |\L_k (\v{x}_0, \dots, \v{x}_{k-1})|^2 \,.
\end{equation}
Let
\[
  \t{B}_k (\v{x}_0, \dots, \v{x}_{k-1}) = \{ \v{p} = (p_1,p_2) \in B_k (\v{x}_0, \dots, \v{x}_{k-1})
                                                ~:~
\]
\[
                                              | \d_{\L_{k}^*} (\t{E}_1 - p_1) - \beta_k' | \le \sigma / 8
                                                \quad \mbox { and } \quad
                                              | \d_{\L_{k}^*} (\t{E}_2 - p_2) - \beta_k'' | \le \sigma / 8
                                          \} \,.
\]
For all $ \v{p} \in \t{B}_k (\v{x}_0, \dots, \v{x}_{k-1})$, we
have either $
  (\t{E}_1 - p_1) \cap \L_{k}^*
$ or $
 (\t{E}_2 - p_2) \cap \L_{k}^*
$ does not $\sigma/2$--uniform. The sets  $(\t{E}_1 - a') \cap
\L_{k-1}^*$ and  $(\t{E}_2 - b') \cap \L_{k-1}^*$ satisfy
(\ref{c:sm_sq}) with $\a$ equals $2^{-4} \sigma^{3/2}$. This
implies that
\begin{equation}\label{tmp:12:51'''}
  | \t{B}_k (\v{x}_0, \dots, \v{x}_{k-1})|
                \ge  \frac{\sigma}{2} |\L_k (\v{x}_0, \dots, \v{x}_{k-1})|^2 \,.
\end{equation}

Suppose that
\begin{equation}\label{c:g_3}
  g_3 (\L_{k-1}^*, \v{x}_{k-1}) = \beta_k' \beta_k'' \ge \tau \beta /8 \,.
\end{equation}
It follows from (\ref{c:g_3}) that
\begin{equation}\label{c:g_3'}
  g_3 (\L^*_k (\v{x}_0, \dots, \v{x}_{k-1}), \v{p}) \ge \beta_k' \beta_k'' - \sigma /2 \ge \tau \beta /16 \,,
\end{equation}
for all $\v{p} \in \t{B}_k (\v{x}_0, \dots, \v{x}_{k-1})$.

Let us consider the sum
\[
  S = S (\v{x}_0, \dots, \v{x}_{k-1})
        =
  \frac{1}{ |\L_k (\v{x}_0, \dots, \v{x}_{k-1}) |^2 }
    \sum_{\v{x}_k \in \L_k (\v{x}_0, \dots, \v{x}_{k-1})}
        \frac{1}{ |\L_{k+1} (\v{x}_0, \dots, \v{x}_{k}) |^2 }
\]
\[
    \cdot
    \sum_{\v{y} \in \L_{k+1} (\v{x}_0, \dots, \v{x}_{k})}
    g(\L_{k+1}^* (\v{x}_0, \dots, \v{x}_{k}), \v{y}) \,.
\]
Write the sum $S$ as $S' + S''$, where the summation in $S'$ is
taken over
$\v{x}_k \in  \t{B}_k (\v{x}_0, \dots, \v{x}_{k-1})$ and the
summation in $S''$ is taken over $\v{x}_k \in \L_k (\v{x}_0,
\dots, \v{x}_{k-1}) \setminus \t{B}_k (\v{x}_0, \dots,
\v{x}_{k-1})$.
Note that if $\v{x}_k \in \t{B}_k (\v{x}_0, \dots, \v{x}_{k-1})$, then the Bohr set \\
$\L_{k+1}^* (\v{x}_0, \dots, \v{x}_{k})$ is constructed by
Corollary  \ref{cor:together}. Using this corollary, we obtain
\begin{equation}\label{tmp:12:52_1}
  S' \ge
    \frac{1}{ |\L_k (\v{x}_0, \dots, \v{x}_{k-1}) |^2 }
        \sum_{\v{y} \in  \t{B}_k (\v{x}_0, \dots, \v{x}_{k-1}) }
            g(\L_k^* (\v{x}_0, \dots, \v{x}_{k-1}), \v{y}) ( 1 + \frac{\sigma^2}{32}) \,.
\end{equation}
Let us estimate the sum $S''$. Using Lemma \ref{l:l4}, we get
\begin{equation}\label{tmp:12:52_2}
  S'' \ge
        \frac{1}{ |\L_k (\v{x}_0, \dots, \v{x}_{k-1}) |^2 }
            \sum_{\v{y} \in \L_k (\v{x}_0, \dots, \v{x}_{k-1}) \setminus \t{B}_k (\v{x}_0, \dots, \v{x}_{k-1})}
                 g(\L_k^* (\v{x}_0, \dots, \v{x}_{k-1}), \v{y})  - 8 \k
\end{equation}
Combining (\ref{c:g_3'}), (\ref{tmp:12:52_1}), (\ref{tmp:12:52_2})
and (\ref{tmp:12:51'''}), we have
\[
  S \ge
    \frac{1}{ |\L_k (\v{x}_0, \dots, \v{x}_{k-1}) |^2 }
        \sum_{\v{y} \in \L_k (\v{x}_0, \dots, \v{x}_{k-1})} g(\L^*_k (\v{x}_0, \dots, \v{x}_{k-1}), \v{y})
        +
\]
\[
        +
    \frac{1}{ |\L_k (\v{x}_0, \dots, \v{x}_{k-1}) |^2 }
    \sum_{\v{y} \in \t{B}_k (\v{x}_0, \dots, \v{x}_{k-1}) }  2^{-13} \tau^2 \beta^2 \sigma^2   - 2^4 \k
    \ge
\]
\[
    \ge
     \frac{1}{ |\L_k (\v{x}_0, \dots, \v{x}_{k-1}) |^2 }
        \sum_{\v{y} \in \L_k (\v{x}_0, \dots, \v{x}_{k-1})} g(\L^*_k (\v{x}_0, \dots, \v{x}_{k-1}), \v{y})
        +
        2^{-14} \tau^2 \beta^2 \sigma^3 - 2^4 \k
\]
Using Lemma  \ref{l:l4}, we obtain
\[
  S \ge
        g ( \L_{k-1}^* (\v{x}_0, \dots, \v{x}_{k-2}), \v{x}_{k-1} ) + 2^{-14} \tau^2 \beta^2 \sigma^3 - 2^5 \k
    \ge
\]
\[
    \ge
        g ( \L_{k-1}^* (\v{x}_0, \dots, \v{x}_{k-2}), \v{x}_{k-1} ) + 2^{-15} \tau^2 \beta^2 \sigma^3
    \ge
\]
\begin{equation}\label{tmp:12:57}
    \ge
        g ( \L_{k-1}^* (\v{x}_0, \dots, \v{x}_{k-2}), \v{x}_{k-1} ) ( 1 + 2^{-15} \tau^2 \beta^2 \sigma^3 ) \,.
\end{equation}
On the other hand, $S_0$ is an estimate for $S$. Using Lemma
\ref{l:l4}, we get
\[
  S \ge S_0 - 8 \k \,.
\]
Thus if $\v{x}_{k-1}$ belongs to  $\o{R}_{k-1} (\v{x}_0, \dots,
\v{x}_{k-2})$ and $\v{x}_{k-1}$ satisfies (\ref{c:g_3}), then we
have
\begin{equation}\label{tmp:12:57`}
  S \ge g ( \L_{k-1}^* (\v{x}_0, \dots, \v{x}_{k-2}), \v{x}_{k-1} ) ( 1 + 2^{-15} \tau^2 \beta^2 \sigma^3 ) - 8 \k \,.
\end{equation}
Now suppose that  $\v{x}_{k-1}$ is an arbitrary vector,
$\v{x}_{k-1} \in \L_{k-1} (\v{x}_0, \dots, \v{x}_{k-2})$. Using
Lemma \ref{l:l4} twice, we have
\begin{equation}\label{tmp:12:57_7}
  S \ge g ( \L_{k-1}^* (\v{x}_0, \dots, \v{x}_{k-2}), \v{x}_{k-1} ) - 16 \k \,.
\end{equation}
\\

Let us consider $\ind_{k+1} (\mathbf{\L}_{k+1}) (g)$. We have
\[
  \ind_{k+1} (\mathbf{\L}_{k+1}) (g) =
\]
\[
               \frac{1}{|\L_0|^2} \sum_{\v{x}_0 \in \L_0} \frac{1}{|\L_1(\v{x}_0)|^2} \sum_{\v{x}_1 \in \L_1 (\v{x}_0)}
               \dots
               \sum_{\v{x}_{k-1} \in \L_{k-1} (\v{x}_0, \dots, \v{x}_{k-2}) } S(\v{x}_0, \dots, \v{x}_{k-1}) \,.
\]
By assumption $\ind_{k-1} (\mathbf{\L}_{k-1}, \o{R}_{k-1} ) (g_3)
\ge \tau \beta /4$. In other words
\[
               \frac{1}{|\L_0|^2} \sum_{\v{x}_0 \in \L_0} \frac{1}{|\L_1(\v{x}_0)|^2} \sum_{\v{x}_1 \in \L_1 (\v{x}_0)}
               \dots
\]
\begin{equation}\label{f:tmp_22:17_7}
               \sum_{\v{x}_{k-1} \in \o{R}_{k-1} (\v{x}_0, \dots, \v{x}_{k-2}) }
                    g_3 ( \L_{k-1}^* (\v{x}_0, \dots, \v{x}_{k-2}), \v{x}_{k-1} )
                        \ge
                            \tau \beta /4 \,.
\end{equation}
By $M_{k-1} (\v{x}_0, \dots, \v{x}_{k-2}) $ denote the set of
$\v{x}_{k-1} \in \o{R}_{k-1} (\v{x}_0, \dots, \v{x}_{k-2})$ such
that $\v{x}_{k-1}$ satisfies (\ref{c:g_3}). Using
(\ref{f:tmp_22:17_7}), we obtain
\[
    S_M :=
               \frac{1}{|\L_0|^2} \sum_{\v{x}_0 \in \L_0} \frac{1}{|\L_1(\v{x}_0)|^2} \sum_{\v{x}_1 \in \L_1 (\v{x}_0)}
               \dots
\]
\begin{equation}\label{f:tmp_22:24_7}
               \sum_{\v{x}_{k-1} \in M_{k-1} (\v{x}_0, \dots, \v{x}_{k-2}) }
                    g_3 ( \L_{k-1}^* (\v{x}_0, \dots, \v{x}_{k-2}), \v{x}_{k-1} )
                        \ge
                            \tau \beta /8 \,.
\end{equation}
Using (\ref{c:g_3}), (\ref{tmp:12:57`}), (\ref{tmp:12:57_7}) and
(\ref{f:tmp_22:24_7}), we get
\[
  \ind_{k+1} (\mathbf{\L}_{k+1} ) (g)
                \ge
                \frac{1}{|\L_0|^2} \sum_{\v{x}_0 \in \L_0} \frac{1}{|\L_1(\v{x}_0)|^2} \sum_{\v{x}_1 \in \L_1 (\v{x}_0)}
                \dots
\]
\[
                \Big\{
                \sum_{\v{x}_{k-1} \in M_{k-1} (\v{x}_0, \dots, \v{x}_{k-2}) }
                (  g ( \L_{k-1}^* (\v{x}_0, \dots, \v{x}_{k-2}), \v{x}_{k-1} ) ( 1 + 2^{-15} \tau^2 \beta^2 \sigma^3 ) - 8 \k)
                    +
\]
\[
                    +
                \sum_{\v{x}_{k-1} \in \L_{k-1} (\v{x}_0, \dots, \v{x}_{k-2} ) \setminus M_{k-1} (\v{x}_0, \dots, \v{x}_{k-2})}
                (  g ( \L_{k-1}^* (\v{x}_0, \dots, \v{x}_{k-2}), \v{x}_{k-1} ) - 16 \k) \Big\}
             \ge
\]
\[
             \ge
                \ind_{k-1} (\mathbf{\L}_{k-1}) (g)
                +
                  2^{-15} \tau^2 \beta^2 \sigma^3 \Big( \frac{\tau \beta}{8} \Big)
                  S_M
                  - 24 \k
             \ge
\]
\[
                \ge
                    \ind_{k-1} (\mathbf{\L}_{k-1} ) (g) + 2^{-24} \tau^4 \beta^4 \sigma^3  - 24 \k
                \ge
\]
\[
                \ge
                    \ind_{k-1} (\mathbf{\L}_{k-1} ) (g) + 2^{-25} \tau^4 \beta^4 \sigma^3 \,.
\]
In other words, for all $k\ge 1$, we have
\begin{equation}\label{e:inc_ind}
  \ind_{k+1} (\mathbf{\L}_{k+1}) (g) \ge \ind_{k-1} (\mathbf{\L}_{k-1}) (g)
                                                            + 2^{-25} \tau^4 \beta^4 \sigma^3 \,.
\end{equation}

Since for any $k$ we have $\ind_k (\mathbf{\L}_k) (g) \le 1$, it
follows that the total number of steps of the algorithm does not
exceed $K_0=2^{30} \tau^{-4} \beta^{-4} \sigma^{-3}$.

Suppose that the algorithm stops at step $K$, $K\ge 1$, $K \le
2^{30} \tau^{-4} \beta^{-4} \sigma^{-3}$. We have
\begin{equation}\label{eq:g_e}
  \ind_{K} (\mathbf{\L}_K, \o{R}_K) (g_3)   <  \frac{\tau \beta}{4} \,.
\end{equation}
Using Lemma \ref{l:keps}, we get
\[
  \ind_{K} (\mathbf{\L}_K) (g_1) \ge (\d + \tau) \beta - 8 \k K  \ge (\d +  \frac{7 \tau}{8} ) \beta \,.
\]
Using (\ref{eq:g_e}), we obtain
\begin{equation}\label{e:t_20:05}
  \ind_{K} ( \mathbf{\L}_K, R_K ) (g_1) \ge (\d +  \frac{3 \tau}{8} ) \beta \,.
\end{equation}
The summation in (\ref{e:t_20:05}) is taken over the sets $\L_K^*
(\v{x}_0, \dots, \v{x}_{K-1}) + \v{y}$, where $\v{y} \in R_K
(\v{x}_0, \dots, \v{x}_{K-1}) $.

Let $E_K$ be the family of vectors $\v{y}$ such that
$\v{y} \in E_{K} (\v{x}_0, \dots, \v{x}_{K-1})$, and $R_K^*$ be
the family of vectors  $\v{y}$ such that
$\v{y} \in R_{K} (\v{x}_0, \dots, \v{x}_{K-1})$, but $\v{y}$ does
not belong to  $E_{K} (\v{x}_0, \dots, \v{x}_{K-1})$. We have
\begin{equation}\label{tmp:19:47}
  \ind_K (\mathbf{\L}_K, E_K) (g_1)
            < \frac{\tau \beta}{16} \ind_K (\mathbf{\L}_K) (1) \le \frac{\tau \beta}{16}  \,.
\end{equation}
Combining (\ref{e:t_20:05}), (\ref{tmp:19:47}), we get
\begin{equation}\label{tmp:20:25_t}
  \ind_K (\mathbf{\L}_K, R_K^*) (g_1) > (\d + \frac{\tau}{4} ) \beta \,.
\end{equation}
Suppose that for  all $\v{y} \in R_K^* (\v{x}_0, \dots,
\v{x}_{K-1})$, we have $g_2(\L_K^* (\v{x}_0, \dots,
\v{x}_{K-1}),\v{y}) < (\d + \tau/16 )$. Then
\[
  (\d + \frac{\tau}{4} ) \beta  < \ind_K (\mathbf{\L}_K, R_K^*) (g_1)
                        \le (\d + \frac{\tau}{16} ) \ind_{K} (\mathbf{\L}_K, R_K^*) (g_3)
                        \le
\]
\begin{equation}\label{}
                        \le (\d + \frac{\tau}{16} ) \ind_{K} (\mathbf{\L}_K) (g_3) \,.
\end{equation}
Using Lemma \ref{l:keps} once again, we obtain
\[
  (\d + \frac{\tau}{4} ) \beta  < (\d + \frac{\tau}{16} ) \ind_{K} (\mathbf{\L}_K) (g_3)
                        \le (\d + \frac{\tau}{16} ) (\beta + 8 \k K) \le (\d + \frac{\tau}{4} ) \beta
\]
with contradiction. Whence there exist vectors
$\v{x}_0, \dots, \v{x}_{K-1}$, $\v{y}$ such that \\
$
  g_2(\L_K^* (\v{x}_0, \dots, \v{x}_{K-1}),\v{y}) \ge (\d + \tau/16 )
$ and $\v{y} \in R_{K} (\v{x}_0, \dots, \v{x}_{K-1}) \setminus
E_{K} (\v{x}_0, \dots, \v{x}_{K-1})$. Put $\v{t} = \v{y} + \v{s}$
and $\L' = \L_K^* (\v{x}_0, \dots, \v{x}_{K-1})$. We obtain the
vector $\v{t}$, the sets $E_1' = (\t{E}_1 - y_1) \cap \L'$, $E_2'
= (\t{E}_2 - y_2) \cap \L'$ and the Bohr set $\L'$ which satisfy
the conditions $1)$---$3)$.

Let us estimate $D$ and $\eps'$. At the each step of the algorithm
the dimension of Bohr sets increases at most $1$. Since the total
number of steps does not exceed $K_0$, it follows that $D \le d +
2^{30} \tau^{-5} \beta^{-5} \sigma^{-3}$ and $\eps' \ge (2^{-10}
\eps)^D \eps_0$. Using Lemma \ref{l:Bohr_est} and (\ref{c:Nge}),
we obtain that the set  $\L'$ is not empty.
This completes the proof.
\end{proof*}


\section{Proof of main result.}

Let us put Theorem \ref{t:Phase1} and Proposition
\ref{prop:point_3}  together in a single proposition.

\begin{proposition}
  Let $\L= \L(S, \eps_0)$ be a Bohr set, $|S| = d$,
  and $\v{s} = (s_1,s_2) \in G\m G$.
  Let   $E_1$, $E_2$ be sets, $E_i = \beta_i |\L|$, $i=1,2$, $\beta = \beta_1 \beta_2$.
  Suppose
  ${\bf E} = E_1 \m E_2$ is a subset of $(\L+s_1) \m (\L+s_2)$,
  $E_1$, $E_2$ are $(\a_0,2^{-10} \eps^2)$--uniform subsets of $\L + s_1$, $\L + s_2$, respectively,
  $\a_0 = 2^{-2000} \d^{96} \beta_1^{48} \beta_2^{48}$, $\eps = (2^{-100}  \a_0^2 ) / (100 d)$.
  Suppose that $A$ is a subset of ${\bf E}$, $\d_{{\bf E}} (A) = \d$,
  and $A$ has no triples $\{ (k,m), (k+d,m), (k,m+d) \}$ with $d\neq 0$.
  Let
\begin{equation}\label{END!}
    \log N \ge 2^{1000 000} (2^{250 000} \d^{-20 000} \beta^{-200} + d )^3 \log \frac{1}{\d \beta \eps_0 }  \,.
\end{equation}
    Then there is a Bohr set $\t{\L}$
    and a vector $\v{y} = (y_1, y_2) \in G\m G$ with the following properties :
    there exist sets $E_1' \subseteq ( E_1  - y_1 \cap \t{\L} ) $, $E_2' \subseteq ( E_2 - y_2 \cap \t{\L} ) $
    such that
\\
\\
$
  \quad 1) ~~
        \mbox{ Let }
        |E_1'| = \beta_1' |\t{\L}|, |E_2'| = \beta_2' |\t{\L}|
                        \mbox{ and } \beta' = \beta_1' \beta_2' \,.
        \mbox{ Then }
         \beta'  \ge 2^{-1500} \d^{100} \beta \,.
$
\\
$
  \quad 2) ~~  E_1', E_2' \mbox{ are } (\a_0', 2^{-10} \eps'^2)\mbox{--uniform, where }
        \a_0' = 2^{-2000} \d^{96} \beta'^{48},
$
\\
$
         \eps' = \frac{ 2^{-100} \a_0'^2}{ 100 D' } \,,
         D\le D' = 2^{250 000} \d^{-20000} \beta^{-200} + d \,.
%
%
$
\\
$
   \quad 3) ~~ \mbox{ For } \t{\L} = \L (\t{S},\t{\eps})  \mbox{ we have }
$ $
   |\t{S}| = D ,
$ $
    \mbox{ and }
   \t{\eps}
          \ge ( 2^{-100} \eps'^2 )^D  \eps_0 \,.
$
\\
$
  \quad 4) ~~
  \d_{E_1' \m E_2' } (A) \ge \d + 2^{-600} \d^{22} \,.
$ \label{pred:Ph1_Ph2}
\end{proposition}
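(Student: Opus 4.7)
The plan is a two--step construction. First, apply Theorem \ref{t:Phase1} to $A \subseteq E_1 \m E_2$: since $A$ has no nontrivial corner, the theorem locates a sub--rectangle $F_1 \m F_2$ inside a sub-Bohr set on which $A$ has strictly higher density. The sides $F_1, F_2$ may however fail the $(\a_0, \cdot)$--uniformity condition needed for the next iteration. Proposition \ref{prop:point_3} is then applied to restore this regularity, preserving a constant fraction of the density gain, at the cost of passing to a further sub--Bohr set of somewhat higher dimension.

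\emph{Step 1.} The hypotheses on $E_1, E_2$ are assumed, and (\ref{END!}) easily implies $\log N \geq 2^{10} d \log(1/(\eps_0\eps))$, so Theorem \ref{t:Phase1} applies. It yields a Bohr set $\t{\L}_1 = \L(S, \t{\eps}_1)$ of unchanged dimension $d$ and $\t{\eps}_1 \geq 2^{-5}\eps'\eps_0$, a vector $\v{y}_1 = (y_1^{(1)}, y_2^{(1)})$, and sets $F_1 \subseteq E_1 \cap (\t{\L}_1 + y_1^{(1)})$, $F_2 \subseteq E_2 \cap (\t{\L}_1 + y_2^{(1)})$ with
$$ |F_i| \geq 2^{-500}\d^{22}\beta_i |\t{\L}_1| \quad (i=1,2), \qquad \d_{F_1 \m F_2}(A) \geq \d + 2^{-500}\d^{22}. $$

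\emph{Step 2.} Translate by $-\v{y}_1$ so that $F_1, F_2 \subseteq \t{\L}_1$ and apply Proposition \ref{prop:point_3} with increment $\tau := 2^{-500}\d^{22}$, base density $\d$, and uniformity parameter $\sigma := \a_0' = 2^{-2000}\d^{96}(\beta')^{48}$, where $\beta' := 2^{-1500}\d^{100}\beta$ is fixed as the claimed lower bound on the output product density. The hypotheses $\sigma \leq 2^{-100}\tau \cdot (|F_1||F_2|/|\t{\L}_1|^2)$ and (\ref{c:Nge}) follow from (\ref{END!}) after a short calculation. The output is a Bohr set $\t{\L} = \L(\t{S}, \t{\eps})$, a shift $\v{t}$, and $(\sigma, 2^{-10}\eps'^2)$--uniform sets $E_1', E_2'$ with
$$ \beta_1'\beta_2' \geq \frac{\tau}{16} \cdot \frac{|F_1||F_2|}{|\t{\L}_1|^2} \geq 2^{-1504}\d^{66}\beta \geq 2^{-1500}\d^{100}\beta, $$
the last step using smallness of $\d$, and $\d_{E_1' \m E_2'}(A - \v{t} - \v{y}_1) \geq \d + \tau/16 \geq \d + 2^{-600}\d^{22}$. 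Setting $\v{y} := \v{t} + \v{y}_1$ gives conclusions 1), 2), 4); conclusion 3) follows by substituting the parameter values into Proposition \ref{prop:point_3}'s dimension and width estimates and combining with $\t{\eps}_1 \geq 2^{-5}\eps'\eps_0$ from Step 1.

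The main delicate point is the circular dependence between $\sigma$ and $\beta'$: the target uniformity parameter $\a_0'$ is defined through the output product density $\beta'$, which is only guaranteed as a lower bound by Proposition \ref{prop:point_3}'s cardinality conclusion. The resolution is to fix $\beta'$ at its claimed lower bound $2^{-1500}\d^{100}\beta$ \emph{before} invoking the proposition, define $\sigma$ accordingly, and verify \emph{a posteriori} (as in the displayed calculation above) that the actual output product density exceeds $\beta'$. The secondary obstacle is arithmetic: checking that the dimension increment in Step 2 fits inside $D' - d = 2^{250000}\d^{-20000}\beta^{-200}$ and that (\ref{END!}) dominates the doubly--exponential requirement (\ref{c:Nge}). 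Both are routine but require care because of the doubly--exponential form of the bounds.
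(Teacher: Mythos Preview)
Your two-step approach—apply Theorem \ref{t:Phase1} for the density increment, then Proposition \ref{prop:point_3} to restore uniformity—is exactly the combination the paper indicates (it states the proposition without proof, saying only to ``put Theorem \ref{t:Phase1} and Proposition \ref{prop:point_3} together''). Your handling of the circular dependence between $\sigma$ and $\beta'$ by fixing $\beta'$ at its claimed lower bound is the natural resolution, and the parameter arithmetic, while tedious, goes through with the generous constants chosen.
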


{\bf Proof of Theorem \ref{main_th}}. Suppose that $A \subseteq G
\m G$, $|A| = \d N^2$ and $A$ has no  triples $\{ (k,m), (k+d,m),
(k,m+d) \}$ with $d\neq 0$.

The proof of Theorem \ref{main_th} is a sort of an algorithm.

After the $i$th step of the algorithm a vector  $\v{s}_i =
(s^{(1)}_i,s^{(2)}_i)$ and sets : a regular Bohr set $\L_i = \L
(S_i, \eps_i)$, sets $E_i^{(1)} - s^{(1)}_i \subseteq \L_i $,
$E_i^{(2)} - s^{(2)}_i \subseteq \L_i $, will be constructed. Let
$|E_i^{(1)}| = \beta^{(1)}_i |\L_i|$,  $|E_i^{(2)}| =
\beta^{(2)}_i |\L_i|$, $\beta_i = \beta^{(1)}_i \beta^{(2)}_i$,
${\bf E_i} = E_i^{(1)} \m E_i^{(2)}$.

The sets $\L_i$, $E_i^{(1)}$, $E_i^{(2)}$ satisfy the following
conditions
\\
$ 1)~~~ \beta_{i}  \ge 2^{-1500} \d^{100} \beta_{i-1} $. \\
$ 2)~~~ E_i^{(1)}, E_i^{(2)}$ are $(\a_0^{(i)}, 2^{-10}
(\eps'_{i})^2)$--uniform,
$ \a_0^{(i)} = 2^{-2000} \d^{96} \beta_i^{48}$, $\eps'_{i} = 2^{-100} (\a_0^{(i)})^2 / (100 d_i )$. \\
$ 3)~~~ \L_i = \L (S_i, \eps_i),
  |S_i| = {d_i} , d_i \le 2^{250 000} \d^{-20000} \beta_{i-1}^{-200} + d_{i-1} ,
$
$
   \eps_{i} \ge (2^{-100} (\eps'_{i})^2 )^{d_i} \eps_{i-1}
$.
\\
$ 4)~~~ \d_{\bf E_i} (A') \ge \d_{\bf E_{i-1} } (A') + 2^{-600}
\d^{22} $.
\\

Proposition \ref{pred:Ph1_Ph2} allows us to carry the $(i+1)$th
step of the algorithm. By this Proposition there exists a new
vector $\v{s}_{i+1} = (s^{(1)}_{i+1},s^{(2)}_{i+1}) \in G\m G$ and
sets : a regular Bohr set $\L_{i+1} = \L (S_{i+1}, \eps_{i+1})$,
sets $E_{i+1}^{(1)} - s^{(1)}_i \subseteq \L_{i+1} $,
$E_{i+1}^{(2)} - s^{(2)}_{i+1} \subseteq \L_{i+1} $, ${\bf
E_{i+1}} = E_{i+1}^{(1)} \m E_{i+1}^{(2)}$, which satisfy $1)$ ---
$4)$.

Put $S_0 = \{ 0 \} $, $\L_0 = \L (S_0, 1)$ and $E_1 = E_2 = G$,
$\beta_0 = 1$. Clearly, $E_1$, $ E_2$  are $(2^{-2000} \d^{96},
2^{-10 000}  \d^{400} )$--uniform. Hence we have constructed
zeroth step of the algorithm.

Let us estimate the total number of steps of our procedure. For an
arbitrary $i$ we have $ \d_{\bf E_i} (A') \le 1$. Using this and
condition $4)$, we obtain that the total number of steps cannot be
more then $ 2^{700} \d^{-21} = K $.

Condition $3)$ implies  $\beta_i \ge (2^{-1500} \d^{100})^i$.
Hence  $d_i \le (C_1 \d)^{ -C'_1 i }$, where $C_1, C'_1 > 0$ are
absolute constants.

To prove  Theorem \ref{main_th}, we                                                                 
need to verify
condition (\ref{END!}) at the last step of the algorithm.
Condition (\ref{END!}) can be rewrite as
\begin{equation}\label{END!!}
  N \ge (C'_2 \d )^{-C'_3 \d^{-C'_4 K}} =  \exp ( \d^{ - C' \d^{-21} }) \,,
\end{equation}
where $C'_2, C'_3, C'_4, C' > 0$ are absolute constants. By
assumption
\[
  \d \gg \frac{1}{ ( \log \log N )^{1/22} }
\]
and we get (\ref{END!!}). Hence  $A'$ has a triple $\{ (k,m),
(k+d,m), (k,m+d) \}$, where $d\neq 0$. This contradiction
concludes the proof.

\begin{note*} Certainly,  the constant $14$ in Theorem  \ref{main_th}  can be slightly decreased.
Nevertheless, it is the author's opinion that this constant cannot
be lowered  to anything like $1$ without a new idea.
\end{note*}

Using the following lemma of B. Green (see e.g.
\cite{Shkr_tri_LMS} or \cite{Gow_hyp4}) one can obtain a corollary
of Theorem \ref{main_th} concerning subsets of $\{ -N,\dots,N
\}^2$ without corners (see details in \cite{Shkr_tri_LMS}).

\begin{lemma}
  Let  $N$ be a natural number.
  Suppose  $A$ is a subset of $\{ -N,\dots,N \}^2$, $|A| = \d (2N+1)^2$, and
  $A$ has no triples $\{ (k,m), (k+d,m), (k,m+d) \}$ with $d>0$.
  Then there exists a set $A_1 \subseteq A$ such that \\
  $1)~$
  $
   |A_1| \ge \d^2  (2N+1)^2 /4
  $
  and \\
  $2)~$ $ A_1 $ has no triples  $ \{ (k,m), (k+d,m), (k,m+d) \}$ with $d\neq 0$.
\label{l:Green's_trick}
\end{lemma}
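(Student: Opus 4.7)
The plan is a point-reflection trick. For each $(a,b) \in \{-2N, \ldots, 2N\}^2$, set
\[
    A_1(a,b) = \{ (k,m) \in A ~:~ (a-k, b-m) \in A \},
\]
the intersection of $A$ with its reflection through $(a/2,b/2)$. I claim that every such $A_1(a,b)$ is automatically free of triples with $d \neq 0$, and then I will locate, by an averaging argument, a centre $(a,b)$ for which $|A_1(a,b)| \ge \delta^2 (2N+1)^2/4$.

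For corner-freeness, triples with $d > 0$ cannot occur in $A_1(a,b) \subseteq A$ by the hypothesis on $A$. Suppose instead that $A_1(a,b)$ contained a triple $\{(k,m), (k+d,m), (k,m+d)\}$ with $d < 0$. By definition of $A_1(a,b)$, the reflected points $(a-k, b-m)$, $(a-k-d, b-m)$, $(a-k, b-m-d)$ all lie in $A$. Setting $K = a-k$, $M = b-m$ and $D = -d > 0$, these three reflected points are precisely $\{(K,M), (K+D,M), (K,M+D)\}$, a corner in $A$ with $D > 0$, contradicting the hypothesis.

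For the density, summing over all centres gives
\[
    \sum_{(a,b) \in \{-2N, \ldots, 2N\}^2} |A_1(a,b)|
        = \sum_{(k,m),(k',m') \in A} 1
        = |A|^2 = \delta^2 (2N+1)^4,
\]
because each ordered pair of points of $A \subseteq \{-N,\ldots,N\}^2$ corresponds to the unique sum $(a,b) = (k+k', m+m')$, which indeed lies in $\{-2N, \ldots, 2N\}^2$. Dividing by the $(4N+1)^2$ choices of $(a,b)$ and using the elementary inequality $(2N+1)/(4N+1) \ge 1/2$, some $(a,b)$ gives $|A_1(a,b)| \ge \delta^2 (2N+1)^2 / 4$, as required. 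No step should present a serious obstacle: the whole argument rests on the observation that any point reflection reverses the sign of the corner parameter $d$, so intersecting $A$ with any of its reflections kills the $d<0$ corners once $A$ avoids the $d>0$ ones, while the density gain is a routine first-moment calculation.
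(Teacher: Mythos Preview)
The paper does not actually supply a proof of this lemma; it merely states it, attributes it to B.~Green, and points to \cite{Shkr_tri_LMS} and \cite{Gow_hyp4} for details. Your argument is correct and is exactly the standard proof that goes under the name ``Green's trick'': intersect $A$ with a point-reflection of itself (which reverses the sign of the corner parameter $d$) and then average over the centre of reflection to find a dense such intersection. All steps check out, including the bookkeeping that every sum $(k+k',m+m')$ with $(k,m),(k',m')\in A$ lands in $\{-2N,\dots,2N\}^2$ and the inequality $(2N+1)/(4N+1)\ge 1/2$.
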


\begin{corollary}
 Let $\delta>0$, and $N\gg \exp \exp ( \d^{-43} )$.
 Let  $A$ be a subset of
 $\{1,\dots,N\}^2$ of cardinality at least  $\delta N^2$.
 Then $A$ contains a triple
 $\{ (k,m), (k+d,m), (k,m+d) \}$ with $d>0$.
\end{corollary}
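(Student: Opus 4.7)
\begin{proof*}[Plan of proof of the Corollary]
The plan is to deduce the Corollary from Theorem \ref{main_th} in two standard reduction steps: first use Lemma \ref{l:Green's_trick} to trade the one-sided condition ``no corner with $d>0$'' for the two-sided condition ``no corner with $d\neq 0$'' at the cost of squaring the density; then transfer the resulting subset of $\{-N,\dots,N\}^2$ into $G\m G$ for a suitable cyclic group $G$ and apply Theorem \ref{main_th}.

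Concretely, I would argue by contradiction. Suppose $A\subseteq \{1,\dots,N\}^2$ has $|A|\ge \d N^2$ and contains no triple $\{(k,m),(k+d,m),(k,m+d)\}$ with $d>0$. Viewing $A$ as a subset of $\{-N,\dots,N\}^2$ (padding with the empty region on the negative side) its density there is at least $\d/5$, and the no--positive--$d$ property is preserved. Lemma \ref{l:Green's_trick} then produces $A_1\subseteq A$ with $|A_1|\gg \d^2 N^2$ and with \emph{no} triple $\{(k,m),(k+d,m),(k,m+d)\}$, $d\neq 0$.

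Next, pick $M$ in the range $[5N+1,6N]$ and set $G=\Z/M\Z$. Embed $\{-N,\dots,N\}^2$ into $G\m G$ in the natural way. Since the coordinates occurring in any potential corner $\{(k,m),(k+d,m),(k,m+d)\}$ with $k,m,k+d,m+d\in\{-N,\dots,N\}$ lie in $[-3N,3N]$, the choice $M\ge 5N+1$ guarantees that any corner of the image of $A_1$ in $G\m G$ (in the group sense) is automatically a corner in $\{-N,\dots,N\}^2$. Under this embedding the density of $A_1$ in $G\m G$ is $|A_1|/M^2\gg \d^2$, and $|G|\asymp N$.

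Finally, apply Theorem \ref{main_th} (with the constant $c=1/22$ recorded in the note following its statement): the hypothesis needed is $|A_1|/|G|^2\gg (\log\log|G|)^{-1/22}$, i.e.\ roughly $\d^2\gg (\log\log N)^{-1/22}$. The standing assumption $N\gg \exp\exp(\d^{-43})$ is more than enough to ensure this inequality. Theorem \ref{main_th} therefore produces a corner $\{(k,m),(k+d,m),(k,m+d)\}\subset A_1$ in $G\m G$ with $d\neq 0$; by the no--wrap--around property of our embedding this is a genuine corner in $\{-N,\dots,N\}^2$, contradicting the construction of $A_1$. The contradiction forces $A$ to contain a corner with $d>0$. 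The only delicate point is tracking the constants through the two reductions so that the claimed exponent $43$ (rather than $44$) in $\exp\exp(\d^{-43})$ suffices; this is purely an arithmetic matter and should be handled by choosing the constants in the embedding tightly, with no real combinatorial obstacle.
\end{proof*}
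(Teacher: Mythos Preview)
Your approach is exactly the one the paper indicates: apply Lemma \ref{l:Green's_trick} to pass from the one--sided condition $d>0$ to the two--sided condition $d\neq 0$ at the cost of squaring the density, then embed into a cyclic group $G=\Z/M\Z$ with $M$ a constant multiple of $N$ to avoid wrap--around, and invoke Theorem \ref{main_th}. The paper itself does not spell this out beyond a reference to \cite{Shkr_tri_LMS}, so your write--up is in fact more detailed than what appears here.

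One point deserves correction. You say that recovering the exponent $43$ (rather than $44$) is a matter of ``choosing the constants in the embedding tightly''. That is not where the saving comes from. If you use the \emph{stated} form of Theorem \ref{main_th} with $c=1/22$, the density condition becomes $\d^2 \gg (\log\log N)^{-1/22}$, i.e.\ $\log\log N \gg \d^{-44}$, and no amount of care in the embedding will turn $44$ into $43$. The right thing to do is to go back to the final inequality (\ref{END!!}) in the \emph{proof} of Theorem \ref{main_th}, which actually gives the sharper requirement $\log\log N \gg \d^{-21}\log(1/\d)$; the exponent $22$ in the statement is obtained by absorbing the logarithmic factor. Substituting $\d' \asymp \d^2$ (the density of $A_1$ in $G\m G$) into this sharper bound yields $\log\log N \gg \d^{-42}\log(1/\d)$, and absorbing the logarithm now gives $\log\log N \gg \d^{-43}$, i.e.\ $N\gg \exp\exp(\d^{-43})$ as claimed. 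So the exponent $43 = 2\cdot 21 + 1$, not $2\cdot 22$.
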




\affiliationone{
   I.D. Shkredov\\
   Division of Dynamical Systems\\
   Department of Mechanics and Mathematics\\
   Moscow State University\\
   Leninskie Gory, Moscow, 119991\\
   Russia
   \email{ishkredov@rambler.ru\\
   ishkredo@mech.math.msu.su}
   }

\end{document}